\numberwithin{equation}{section}
\newtheorem{theorem}{Theorem}[section]
\newtheorem{proposition}[theorem]{Proposition}
\newtheorem{corollary}[theorem]{Corollary}
\newtheorem{lemma}[theorem]{Lemma}
\newtheorem{remark}[theorem]{Remark}
\newtheorem{definition}[theorem]{Definition}
\DeclareMathOperator{\col}{col}
\DeclareMathOperator{\card}{card}
\DeclareMathOperator{\diag}{diag}
\DeclareMathOperator{\codiag}{codiag}
\DeclareMathOperator{\dom}{dom}
\DeclareMathOperator{\Span}{span}
\DeclareMathOperator{\Lip}{Lip}
\DeclareMathOperator{\const}{const}
\DeclareMathOperator*{\esssup}{ess\,sup}
\newcommand{\eps}{\varepsilon}
\renewcommand{\l}{\lambda}
\renewcommand{\Im}{{\rm Im\,}}
\renewcommand{\Re}{{\rm Re\,}}
\newcommand{\wt}{\widetilde}
\newcommand{\ol}{\overline}
\def\cI{\mathcal{I}}
\def\cK{\mathcal{K}}
\def\cL{\mathcal{L}}
\def\fH{\mathfrak{H}}
\def\bC{\mathbb{C}}
\def\bD{\mathbb{D}}
\def\bN{\mathbb{N}}
\def\bQ{\mathbb{Q}}
\def\bR{\mathbb{R}}
\def\bZ{\mathbb{Z}}
\begin{document}

\sloppy

\title[On the Riesz basis property for Dirac type operators]
{On the Riesz basis property of root vectors system for $2
\times 2$ Dirac type operators}

\author{Anton A. Lunyov}
\address{Institute of Applied Mathematics and Mechanics, NAS of Ukraine,
R. Luxemburg str. 74, \, 83114 Donetsk, Ukraine} \curraddr{}
\email{A.A.Lunyov@gmail.com}

\author{Mark~M.~Malamud}
\address{Institute of Applied Mathematics and Mechanics, NAS of Ukraine,
R. Luxemburg str. 74, \, 83114 Donetsk, Ukraine} \curraddr{}
\email{mmm@telenet.dn.ua}

\subjclass[2010]{47E05, 34L40, 34L10, 35L35}
\date{}
\keywords{Systems of ordinary differential equations; regular
boundary conditions; transformation operators; Riesz basis
property; Timoshenko beam model.}

\begin{abstract}
The paper is concerned with the Riesz basis property of a boundary value problem
associated in $L^2[0,1] \otimes \bC^2$  with the following  $2 \times 2$ Dirac type equation
\begin{equation} \label{eq:Ly.abstract}
    L y = -i B^{-1} y' + Q(x) y = \lambda y , \quad
    B = \begin{pmatrix} b_1 & 0 \\ 0 & b_2 \end{pmatrix}, \quad
    y= \col(y_1, y_2),
\end{equation}
with a \textbf{summable} potential matrix $Q \in L^1[0,1]
\otimes \bC^{2 \times 2}$ and  $b_1 < 0 < b_2$. If $b_2 = -b_1
=1$ this equation is equivalent to one dimensional Dirac
equation. It is proved that the system of root functions of a
linear boundary value problem constitutes  a Riesz basis in
$L^2[0,1] \otimes \bC^2$ provided that the boundary conditions
are strictly regular.

By analogy with  the case of ordinary differential equations,
boundary conditions are called strictly regular if the
eigenvalues of the corresponding unperturbed $(Q=0)$ operator
are asymptotically simple and separated. As distinguished  from
the Dirac case there is no simple algebraic criterion of the
strict regularity whenever $b_1 + b_2 \not = 0$. However under
certain restrictions on coefficients of the boundary linear
forms we present certain algebraic criteria of the strict
regularity in the latter case. In particular, it is shown that
regular separated boundary conditions are always strictly
regular while periodic (antiperiodic) boundary conditions are
strictly regular if and only if $b_1 + b_2 \not = 0.$

The proof of the main result is based on existence of triangular
transformation operators for  system~\eqref{eq:Ly.abstract}.
Their existence is  also established here in the case  of  a
summable $Q$. In the case of regular (but not strictly regular)
boundary conditions we prove  the  Riesz basis property with
parentheses. The main results are applied  to establish   the
Riesz basis  property of the dynamic generator of spatially
non-homogenous damped Timoshenko beam model.
\end{abstract}

\maketitle{}

\renewcommand{\contentsname}{Contents}
\tableofcontents

\section{Introduction} \label{sec:Intro}
%
%
Spectral theory of non-selfadjoint boundary value problems (BVP)
on a finite interval $\cI=(a,b)$ for $n$th order ordinary
differential equations (ODE)
\begin{equation}\label{eq:ODE}
    y^{(n)} + q_1y^{(n-2)} + ... + q_{n-1}y = \l ^n y, \qquad x\in (a,b),
\end{equation}
with coefficients $q_j\in L^1[a,b]$ takes its origin in the
classical papers by Birkhoff~\cite{Bir08, Bir08exp} and
Tamarkin~\cite{Tam12, Tam17, Tam28}. They introduced the concept
of \emph{regular boundary conditions} for ODE and investigated
the asymptotic behavior of eigenvalues and eigenfunctions of
related BVP. Moreover, they proved that the system of root
functions, i.e. eigenfunctions and associated functions, of the
regular BVP is complete. Their results are also treated in the
classical monographs (see~\cite[Section 2]{Nai69}
and~\cite[Chapter 19]{DunSch71}).

More subtle is the question of whether the system of root
functions is a Riesz basis in $L^2[a,b]$.
V.P.~Mikhailov~\cite{Mikh62} and G.M.~Keselman~\cite{Kes64}
independently proved that the system of root functions of a
boundary value problem for equation~\eqref{eq:ODE} forms a Riesz
basis provided that the boundary conditions are \emph{strictly
regular}. Similar results are also obtained in~\cite[Chapter
19.4]{DunSch71}. Moreover, for boundary conditions which are
\emph{only regular} but not strictly regular,
A.A.~Shkalikov~\cite{Shk79, Shk82} proved that in the case
$q_j\in L^1(a,b),\ j\in \{1,\ldots, n-1\},$ the system of root
functions forms a Riesz basis with parentheses. Recently
A.M.~Minkin~\cite{Minkin06} proved that the converse  statement
is almost true. Namely, he proved that if multiplicities of the
eigenvalues are uniformly bounded, the Riesz basis property for
the system of root functions of BVP implies \emph{the
regularity} (not necessarily strict regularity) of the boundary
conditions.

Numerous papers are devoted to the completeness and Riesz basis
property for the  Sturm-Liouville operator (see the recent
review~\cite{Mak12} by A.S.~Makin and the references cited
therein). We especially mention the recent achievements for
periodic (anti-periodic) Sturm-Liouville operator
$-\frac{d^2}{dx^2} + q(x)$ on $[0,\pi]$. Namely, F.~Gesztesy and
V.A.~Tkachenko~\cite{GesTka09,GesTka12} for $q \in L^2[0,\pi]$
and P.~Djakov and B.S.~Mityagin~\cite{DjaMit12Crit} for $q \in
W^{-1,2}[0,\pi]$ established by different methods a
\emph{criterion} for the system of root functions to contain a
Riesz basis.

In this paper we consider a special case of the following first
order system of ODE
\begin{equation} \label{1.1}
    Ly := L(Q)y := -i B^{-1} y' + Q(x)y = \l y,
    \quad y = \col(y_1,...,y_n),
\end{equation}
where $B$ is a nonsingular diagonal $n\times n$ matrix with
complex entries,
\begin{equation} \label{1.2}
    B = \diag(b_1, b_2, \ldots, b_n) \in \bC^{n\times n},
\end{equation}
and $Q(\cdot) =: (q_{jk}(\cdot))_{j,k=1}^n \in L^1([0,1];
\bC^{n\times n})$ is a potential matrix.

To obtain a BVP, we adjoin to equation~\eqref{1.1} the following
boundary conditions (BC)
\begin{equation}\label{1.3}
Cy(0) +  D y(1)=0,  \qquad    C= (c_{jk}),\ \  D = (d_{jk}) \in {\Bbb C}^{n\times n}.
\end{equation}
Moreover, in what follows we always impose the maximality
condition ${\rm rank}(C\,\,D)=n.$

Note that, systems~\eqref{1.1} form  a more general object than
ordinary differential equations. Namely, the $n$th-order
differential equation~\eqref{eq:ODE} can be reduced to the
system~\eqref{1.1} with $r=n$ and ${b}_j = \exp\left(2\pi
ij/n\right)$ (see~\cite{Mal99}). The systems~\eqref{1.1} are of
significant interest in some theoretical and practical problems.
For instance, if $n=2m$, $B=\diag(I_m, -I_m)$ and $Q_{11} =
Q_{22}=0$, the system~\eqref{1.1} is equivalent to the Dirac
system~\cite[{Section VII.1}]{LevSar88},~\cite{Mar77}. Note also
that equation~\eqref{1.1} is used to integrate the problem of
$N$ waves arising in the nonlinear optics~\cite[{Section
III.4}]{ZMNovPit80}.

With the system~\eqref{1.1} one associates, in a natural way,
the maximal operator $L_{\max} = L_{\max}(Q)$ acting in
$L^2([0,1]; \bC^n)$ on the domain
\begin{equation}\label{Max_oper_Intro}
    \dom(L_{\max}) = \{y \in W^1_1([0,1]; \bC^n) : L_{\max}y \in L^2([0,1]; \bC^n)\}.
\end{equation}

We denote by $L_{C, D} := L_{C, D}(Q)$ the operator associated
in $L^2([0,1]; {\Bbb C}^n)$ with the
BVP~\eqref{1.1}--\eqref{1.3}. It is defined as the  restriction
of $L = L(Q)$ to the domain
\begin{equation}\label{1.3BB}
\dom(L_{C, D}) = \{y\in \dom(L_{\max}) :\ Cy(0) + D y(1)=0\}.
\end{equation}

Apparently,  the spectral problem~\eqref{1.1}--\eqref{1.3} has
first been investigated by G.~D.~Birkhoff and
R.~E.~Langer~\cite{BirLan23}. Namely, they have extended certain
previous results of~Birkhoff and Tamarkin on non-selfadjoint BVP
for ODE to the case of BVP~\eqref{1.1}--\eqref{1.3}.  More
precisely, they introduced the concepts of \emph{regular and
strictly regular boundary conditions}~\eqref{1.3} and
investigated the asymptotic behavior of eigenvalues and
eigenfunctions of the corresponding operator $L_{C, D}$.
Moreover, they proved \emph{a pointwise convergence result} on
spectral decompositions of the operator $L_{C, D}$ corresponding
to the BVP~\eqref{1.1}--\eqref{1.3} with regular boundary
conditions.

The  completeness problem of the root vectors system  \emph{of
general BVP}~\eqref{1.1}--\eqref{1.3}  has first been
investigated in the recent paper~\cite{MalOri12} by one of the
authors and L.L.~Oridoroga. In this paper the concept of
\emph{weakly regular} boundary conditions for the
system~\eqref{1.1} was introduced and the completeness of the
root vectors for this class of BVP was proved. For the Dirac
type system ($B = B^*$) the concept of {weakly regular} boundary
conditions~\eqref{1.3} coincides with that of  regular ones  and
reads as follows:
\begin{equation}\label{2.4Intro}
\det(CP_{+}  +\  DP_{-}) \ne 0  \quad\text{and}  \quad \det(CP_{-} +\ DP_{+}) \ne 0.
\end{equation}
Here  $P_+$ and $P_-$ are the spectral {projections} onto
"positive"\ and "negative"\ parts of the spectrum of $B=B^*$,
respectively.  In the recent
papers~\cite{LunMal14JST,LunMal14Arx} the completeness  of root
vectors was established for certain classes of non-regular and
even degenerated boundary conditions under certain algebraic
assumptions on the  boundary values $Q(0)$, $Q(1),$ of the
matrix $Q(\cdot)$.

Further, if Dirac type  operator $L_{C, D}$ is dissipative, then
regularity of conditions~\eqref{1.3} is equivalent to the first
of conditions~\eqref{2.4Intro} only. It is proved
in~\cite{LunMal14IEOT}  that the resolvent $(L_{C, D}-\l)^{-1}$
of any complete dissipative Dirac type  operator $L_{C, D}$
admits the spectral synthesis. In particular, the latter happens
if the first of conditions~\eqref{2.4Intro} holds.

Finally, in~\cite{LunMal14Arx, LunMal14JST} it was established
the Riesz basis property with parentheses for system~\eqref{1.1}
subject to various classes of boundary conditions with a
potential $Q \in L^\infty([0,1]; \bC^{n\times n})$.
In~\cite{MykPuy13} a stronger result was obtained for the
Dirichlet BVP  for  $2m \times 2m$ Dirac equation ($n=2m$,
$B=\diag(I_m, -I_m)$) with a potential matrix $Q \in L^2([0,1];
\bC^{2m\times 2m})$.

In this paper we investigate the Riesz basis property for
$2\times 2$ Dirac type system
\begin{equation}\label{eq:systemIntro}
    -i B^{-1} y'+Q(x)y=\l y, \qquad y={\rm col}(y_1,y_2), \qquad x\in[0,1],
\end{equation}
subject to \emph{regular} and  \emph{strictly regular} boundary
conditions~\eqref{1.3}. Here
\begin{equation}\label{eq:BQ}
    B = {\rm diag}(b_1, b_2), \quad b_1 < 0 < b_2, \quad \text{and}\quad
    Q = \begin{pmatrix}
        0      & Q_{12}\\
        Q_{21} & 0
    \end{pmatrix} \in L^1([0,1];\bC^{2 \times 2}).
\end{equation}
First  we note that in this case boundary conditions~\eqref{1.3}
are \emph{regular}, i.e. conditions~\eqref{2.4Intro} are valid,
if and only if they are \emph{equivalent} to the following
conditions
\begin{equation} \label{eq:BC.Reg_B_Con}
   {\widehat U}_{1}(y) = y_1(0) + b y_2(0) + a y_1(1) = 0,\quad
    {\widehat U}_{2}(y) = d y_2(0) + c y_1(1) + y_2(1) = 0,
\end{equation}
with certain $a,b,c,d \in \bC$ satisfying $ad-bc \ne 0$. Clearly
separated, periodic, and antiperiodic boundary conditions are
regular.

Next we recall that  regular BC~\eqref{eq:BC.Reg_B_Con}  are
called \textbf{strictly regular}, if the sequence $\Lambda_0 =
\{\l_n^0\}_{n \in \bZ}$ of the eigenvalues of the unperturbed
($Q=0$) BVP~\eqref{eq:systemIntro}--\eqref{eq:BC.Reg_B_Con}, is
asymptotically separated, i.e. there exist  $\delta > 0$ and
$n_0\in \bN$ such that
$$
|\l_j^0 - \l_k^0| > 2 \delta \quad \text{for}\quad  \ j \ne k \quad  \text{and}\quad
|j|, |k|\ge n_0.
$$
In particular, the eigenvalues $\{\l_n^0\}_{|n| > n_0}$ are
geometrically and algebraically simple.

For Dirac operator ($B = \diag(-1,1)$) the strict regularity of
BC  reads as follows: $(a - d)^2 \ne -4bc.$

Going over to
BVP~\eqref{eq:systemIntro}--\eqref{eq:BC.Reg_B_Con} we note that
a special case of   $2\times 2$ Dirac operators $L_{C,D}(Q)$,
have been investigated much deeper. For instance, P.~Djakov and
B.~Mityagin~\cite{DjaMit12Equi} imposing certain smoothness
condition on $Q$ proved equiconvergence of the spectral
decompositions for $2 \times 2$ Dirac equations subject to
\emph{general regular boundary conditions}.

Moreover,  the Riesz basis property for  $2\times 2$ Dirac
operators $L_{C,D}(Q)$ has been investigated in numerous papers
(see~\cite{TroYam01,
TroYam02,HasOri09,DjaMit10BariDir,Bask11,DjaMit12UncDir,DjaMit12TrigDir,DjaMit12Crit,DjaMit13CritDir}
and references therein). The most complete result was  obtained
by P. Djakov and B. Mityagin in~\cite{DjaMit12UncDir}. Namely,
assuming that  $Q \in L^2[0,1] \otimes \Bbb C^{2 \times 2}$ it
is proved in~\cite{DjaMit12UncDir} that the system of root
vectors of the Dirac operator $L(Q)$ with \emph{regular boundary
conditions} constitutes a \emph{Riesz basis with parentheses} in
$L^2[0,1] \otimes \Bbb C^2$ and  ordinary Riesz basis provided
that BC are \emph{strictly regular}. Note, that
\emph{non-degenerate separated} boundary conditions are always
\emph{strictly regular}, hence the root vectors of the
corresponding BVP constitute  a \emph{Riesz
basis}~\cite{DjaMit12UncDir} (see Remark~\ref{rem_separ_BC} in
this connection).

The following theorem is the  main result of the paper.
%
%
\begin{theorem} \label{th:basis.strict}
Let $L_{C, D}(Q)$ be the operator associated in $L^2([0,1];
{\Bbb C}^2)$ with the
BVP~\eqref{eq:systemIntro}--\eqref{eq:BC.Reg_B_Con} and let
$Q_{12}, Q_{21} \in L^1[0,1]$. Assume that  boundary
conditions~\eqref{eq:BC.Reg_B_Con} are strictly regular. Then
root vectors system of the operator $L_{C, D}(Q)$ forms a Riesz
basis in $L^2[0,1] \otimes \bC^2$.
\end{theorem}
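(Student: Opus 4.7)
The strategy is to reduce the perturbed problem to the explicitly solvable unperturbed case $Q = 0$ by means of triangular Volterra transformation operators, whose existence for summable $Q$ is one of the auxiliary results promised in the abstract. I would carry out three main steps, and expect the quantitative control of eigenfunction deviations under only $L^1$ regularity of $Q$ to be the main obstacle.

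First I would construct a bounded operator $I + \cK$ on $L^2([0,1];\bC^2)$ with a Volterra-type matrix kernel $K(x,t)$, supported in $\{t \le x\}$ and bounded in $L^\infty$, such that every solution of the Cauchy problem $L(Q)y = \l y$, $y(0) = y_0$, has the form
\[
y(x,\l) = e^{i\l Bx} y_0 + \int_0^x K(x,t)\, e^{i\l Bt} y_0 \, dt.
\]
The kernel $K$ is determined by a Gelfand--Levitan-type system of integral equations whose successive approximations converge when $Q \in L^1$. Substituting the resulting representation for the fundamental matrix $Y(x,\l)$ into the characteristic determinant $\Delta(\l) := \det\bigl(C + D\, Y(1,\l)\bigr)$ and expanding, I obtain
\[
\Delta(\l) = \Delta_0(\l) + \sum_j R_j(\l), \qquad R_j(\l) = \int_0^1 k_j(t)\, e^{i\l \mu_j t}\, dt,
\]
where $\Delta_0$ is the unperturbed determinant (a trigonometric polynomial in $e^{i\l b_1}$, $e^{i\l b_2}$), $k_j \in L^1[0,1]$, and $\mu_j \in \{b_1, b_2, b_1 + b_2\}$. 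By the Riemann--Lebesgue lemma, each $R_j(\l)$ tends to $0$ as $|\Re \l| \to \infty$ along any horizontal strip.

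Second I would apply strict regularity. Since $\{\l_n^0\}$ is asymptotically simple and separated by $2\delta > 0$, Rouch\'e's theorem applied to $\Delta(\l)$ on disks of radius $\delta/2$ around each $\l_n^0$ (with $|n| \ge n_0$) yields exactly one simple eigenvalue $\l_n$ of $L_{C,D}(Q)$ per disk, with $\l_n - \l_n^0 \to 0$; a complementary Rouch\'e argument on the boundaries of large rectangles excludes stray eigenvalues at infinity. For each such $n$, the unique (up to scalar) eigenfunction $f_n$ admits the representation $f_n = (I + \cK)\varphi_n$, where $\varphi_n$ is an explicit linear combination of columns of $e^{i\l_n B x}$ dictated by the boundary forms; correspondingly, the unperturbed eigenfunctions $f_n^0$ are the analogous combinations at $\l = \l_n^0$. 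Exploiting the $L^\infty$-boundedness of $K(x,\cdot)$ and Bessel-type estimates for the Fourier expansions that appear, I would derive $\sum_{|n|\ge n_0}\|f_n - f_n^0\|_{L^2}^2 < \infty$. Since $\{f_n^0\}$ is visibly a Riesz basis for $L^2([0,1];\bC^2)$, while $\{f_n\}$ is complete by the general result of~\cite{MalOri12} for regular BC and minimal by simplicity of $\l_n$ for $|n| \ge n_0$, the Bari--Markus $\ell^2$-closeness theorem yields the Riesz basis property after adjoining the finitely many root vectors corresponding to $|n| < n_0$.

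An equivalent finish, more robust to low regularity, would invoke the Riesz basis with parentheses result announced in the same abstract for regular BC: once that is in hand, the asymptotic separation of $\{\l_n\}$ guaranteed by strict regularity forces each parenthesis group to be a singleton for $|n| \ge n_0$, and the parentheses may be removed. The main obstacle, in either route, is quantitative: for $Q \in L^2$ one recovers $\ell^2$-decay of $\l_n - \l_n^0$ and $\|f_n - f_n^0\|$ directly from Parseval, but for $Q \in L^1$ the Riemann--Lebesgue lemma provides only qualitative decay, and sharper $\ell^2$ information has to be extracted from the triangular Volterra structure of $\cK$ and the boundedness of its kernel. The parentheses-removal route is attractive precisely because it needs only qualitative asymptotic separation, converting the task back onto the (still nontrivial) proof of the parenthesised version.
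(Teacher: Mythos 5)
Your first two steps (triangular transformation operators with kernels controlled for $Q\in L^1$, the representation $\Delta(\l)=\Delta_0(\l)+\int_0^1 g_1(t)e^{ib_1\l t}dt+\int_0^1 g_2(t)e^{ib_2\l t}dt$ with $g_j\in L^1$, Riemann--Lebesgue decay, and the Rouch\'e localization $\l_n=\l_n^0+o(1)$) coincide with Sections 2--4 of the paper. The gap is in your third step. Your primary route hinges on the quadratic closeness $\sum_{|n|\ge n_0}\|f_n-f_n^0\|^2<\infty$ and the Bari--Markus theorem, and this is exactly what is \emph{not} available for $Q\in L^1$: the transformation-operator representation yields only $\varphi_{jk}(1,\l_n)=\delta_{jk}e^{ib_k\l_n}+o(1)$ and $\l_n-\l_n^0=o(1)$, with no $\ell^2$ summability, because the Fourier coefficients of an $L^1$ kernel trace need not lie in $\ell^2$. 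You correctly flag this as ``the main obstacle,'' but you do not resolve it, and it cannot be resolved along that route --- the paper itself remarks that the $L^2$-based methods of Djakov--Mityagin and Baskakov ``most likely could not be applied even to Dirac operators with $L^1$-potentials.'' The paper's actual argument avoids quadratic closeness altogether: it invokes the Bari criterion in the form ``complete $+$ minimal $+$ Besselian, with biorthogonal system complete and Besselian'' (Theorem~\ref{th:Bari.crit}), and proves the Besselian property of $\{f_n\}$ and $\{g_n\}$ by writing each eigenfunction as a \emph{boundedly-coefficiented} combination of the columns $\Phi_j(\cdot,\l_n)$, $\Psi_j(\cdot,\overline{\l_n})$, and then showing these columns are Besselian (Lemma~\ref{lem:Phi.Bessel}) by transporting Katsnel'son's theorem on Besselian exponential systems over incondensable sequences through the $L^2$-bounded Volterra operators $N_{jk}$ and their adjoints. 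The quantitative work then goes into the two-sided bounds $\|f_n\|\asymp 1$ and $(f_n,g_n)\asymp 1$, the latter via the identity relating $(f_n,g_n)$ to $\Delta_0'(\l_n)$ and the lower bound $|\Delta_0'(\l_n)|\ge C_1$ that strict regularity provides.

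Your proposed alternative finish is circular in this setting. In the paper the Riesz basis property with parentheses for merely regular boundary conditions (Proposition~\ref{prop:regul.basis.paren}) is \emph{deduced from} Theorem~\ref{th:basis.strict}, by perturbing the boundary conditions to strictly regular ones and applying the abstract Katsnel'son--Markus--Matsaev-type result (Proposition~\ref{prop:Riesz.basis.abstract}) to a bounded perturbation; it is not an independent input. One cannot obtain the parenthesised version directly from abstract perturbation theory either, since for $Q\in L^1$ the multiplication operator by $Q$ is not bounded on $L^2[0,1]\otimes\bC^2$, nor subordinate to $-iB^{-1}\tfrac{d}{dx}$, so the unperturbed-plus-bounded-perturbation framework does not apply to $L(Q)$ itself.
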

%
%
While definition of the strict regularity in  the case of
$b_1\not = -b_2$ is rather implicit, for certain classes of
boundary conditions it can be expressed in purely algebraic
terms. For instance, if $bc=0$ and $ad\not = 0$, then
BC~\eqref{eq:BC.Reg_B_Con} are strictly regular whenever $b_1
\ln |d| + b_2 \ln |a| \ne 0.$ In particular, \emph{periodic
$(a=d= -1)$ and antiperiodic $(a=d= 1)$ BC are strictly regular
if and only if} $b_1 + b_2\not =0$.  Therefore
Theorem~\ref{th:basis.strict}  implies the following surprising
result.
%
%
\begin{corollary} \label{th:basis.Perid_Cond}
Let $Q_{12}, Q_{21} \in L^1[0,1]$ and  $b_1+ b_2 \not =0$.  Then
the system of root vectors  of the \textbf{periodic}
(\textbf{antiperiodic}) operator $L_{C, D}(Q)$ forms a Riesz
basis in $L^2[0,1] \otimes \bC^2$.
\end{corollary}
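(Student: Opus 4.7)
The plan is to reduce the corollary directly to Theorem~\ref{th:basis.strict} by checking that the (anti)periodic conditions take the canonical form~\eqref{eq:BC.Reg_B_Con} and satisfy strict regularity precisely when $b_1+b_2\ne 0$. First I would rewrite periodic $y(0)=y(1)$ (resp.\ antiperiodic $y(0)=-y(1)$) in the form~\eqref{eq:BC.Reg_B_Con} by taking $a=d=-1$, $b=c=0$ (resp.\ $a=d=1$, $b=c=0$); in both cases $ad-bc=1\ne 0$, so the canonical form applies.

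Next I would analyze the unperturbed BVP with $Q\equiv 0$. Its solutions are $y_j(x)=y_j(0)e^{ib_j\l x}$, and substitution into the boundary conditions gives the characteristic determinant
\[
\Delta_0(\l)=\bigl(1+ae^{ib_1\l}\bigr)\bigl(d+e^{ib_2\l}\bigr)-bc\,e^{ib_1\l}.
\]
Since $bc=0$, this factors, and the unperturbed spectrum $\Lambda_0$ is the union of two arithmetic progressions with steps $2\pi/|b_1|$ and $2\pi/|b_2|$.

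The main technical step, which I expect to be the principal obstacle, is verifying strict regularity of these boundary conditions. When $b_1=-b_2$ (the classical Dirac case) the two progressions coincide, every eigenvalue acquires geometric multiplicity two, and strict regularity manifestly fails. When $b_1+b_2\ne 0$, strict regularity must be established by a careful asymptotic analysis of the interlacing zeros of the two factors, as indicated in the discussion preceding the corollary; this is where the arithmetic condition $b_1+b_2\ne 0$ enters decisively, ruling out the clustering of roots from the two progressions.

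Once strict regularity of the (anti)periodic boundary conditions is established, Theorem~\ref{th:basis.strict} applies with any $Q_{12},Q_{21}\in L^1[0,1]$ and yields the Riesz basis property of the root vector system of $L_{C,D}(Q)$ in $L^2[0,1]\otimes\bC^2$, which is the statement of the corollary.
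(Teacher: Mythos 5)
Your reduction is exactly the paper's own route: identify the (anti)periodic conditions with~\eqref{eq:BC.Reg_B_Con} for $a=d=\mp 1$, $b=c=0$, factor $\Delta_0(\l)=(1+a e^{i b_1\l})(d+e^{i b_2\l})$ as in~\eqref{eq:Delta0.bc=0}, and invoke Theorem~\ref{th:basis.strict} once strict regularity is known. The difficulty is that the one step carrying the entire content of the corollary --- that these conditions are strictly regular precisely when $b_1+b_2\ne 0$ --- is not proved in your proposal: you defer it to ``a careful asymptotic analysis of the interlacing zeros, as indicated in the discussion preceding the corollary,'' but that discussion only asserts the claim, and the paper's actual machinery for the case $bc=0$ is Lemma~\ref{lem:bc=0.regular}. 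There the sufficient condition~\eqref{eq:b1.lnd+b2.lna} reads $b_1\ln|d|+b_2\ln|a|\ne 0$, which for $|a|=|d|=1$ degenerates to $0\ne 0$ and is violated; by~\eqref{eq:l1n.l2n.bc=0} both progressions of zeros of $\Delta_0$ then lie on the real axis, so one is in the regime of parts (ii)--(iii) of that lemma, not of part (i).

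In that regime the step you defer does not go through. For the periodic conditions with $b_1=-1$, $b_2=2$ (so $b_1+b_2=1\ne 0$) the two progressions are $2\pi\bZ$ and $\pi\bZ$; they share the infinite set $2\pi\bZ$, each common point is a double zero of $\Delta_0$, hence $\Lambda_0$ is not asymptotically separated and the eigenvalues of the unperturbed problem are not asymptotically simple. For $b_1/b_2\notin\bQ$ the Kronecker argument in the proof of Lemma~\ref{lem:bc=0.regular}(ii) shows the two real progressions come arbitrarily close, with the same conclusion. So the assertion that ``$b_1+b_2\ne 0$ rules out the clustering of roots from the two progressions'' is exactly what has to be established, and under Definition~\ref{def:strictly.regular} it fails; the hypothesis of Theorem~\ref{th:basis.strict} is therefore not verified by the proposed argument (nor, as far as one can check, by the appeal to parts (i)--(ii) of Lemma~\ref{lem:bc=0.regular} made in the paper's own remark). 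A correct treatment must either impose further restrictions on $(b_1,b_2)$ or handle the multiple and clustering eigenvalues by a different mechanism, e.g.\ via the Riesz-basis-with-parentheses statement of Proposition~\ref{prop:regul.basis.paren}.
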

%
%
This result demonstrates substantial difference between Dirac
and Dirac type operators. Note in this connection that  a
criterion for the system of root functions of the
\emph{periodic} (necessarily non-strictly regular) BVP for $2
\times 2$ Dirac equation to contain a Riesz basis (without
parentheses) was obtained by P. Djakov and B. Mityagin
in~\cite{DjaMit12Crit}.

We also prove that the root vectors system of the operator
$L(Q)$ forms a Riesz basis with parentheses provided that BC are
regular (see Proposition~\ref{prop:regul.basis.paren}).

Emphasize that methods used in~\cite{DjaMit12UncDir, Bask11}
essentially use condition $Q \in L^2[0,1] \otimes \bC^{2 \times
2}$ (i.e. the fact that the Fourier coefficients belong to
$l^2(\Bbb Z)$) and most likely could not be applied even to
Dirac operators with  $L^1$-potentials $Q$. Note also that
traditional methods of perturbations theory are also not
applicable here since as opposed to the $L^2$-case, the
multiplication operator by $Q (\in L^1[0,1] \otimes \bC^{2
\times 2})$ is neither $B^{-1}\frac{d}{dx}$~-- compact nor  even
subordinated to the unperturbed operator $B^{-1}\frac{d}{dx}$.

The Riesz basis property for abstract operators is investigated
in numerous papers. Due to the lack of space we only
mention~\cite{Katsn67,MarMats84,Markus88,Agran99}, the recent
paper~\cite{Shk10} and the references therein.

The main results of the paper including Theorem
\ref{th:basis.strict}  were announced in~\cite{LunMal14Dokl}
(partially with proofs). After appearance of~\cite{LunMal14Dokl}
there appeared the paper by A.M. Savchuk and
A.A. Shkalikov~\cite{SavShk14} where Theorem~\ref{th:basis.strict}
was proved for the $2\times 2$ Dirac operator. Note that
approaches in~\cite{LunMal14Dokl} and~\cite{SavShk14}
substantially differ. Moreover the case of Dirac type operators
$(b_1 + b_2 \not =0)$ has  interesting features (see e.g.
Corollary~\ref{th:basis.Perid_Cond}) and   turns out to be  more
complicated.

The paper is organized as follows. In
Section~\ref{sec:Transform} we prove the existence of triangular
transformation operators for equation
\eqref{eq:systemIntro}--\eqref{eq:BQ}. In
Section~\ref{sec:AsympSol} we apply these operators to obtain
asymptotic formulas for solutions to
equation~\eqref{eq:systemIntro}. In turn, these formulas  are
applied in Section~\ref{sec:Regular} to obtain the following
asymptotic formula
\begin{equation} \label{eq:l.n=l.n0+o(1)_Intro}
    \l_n = \l_n^0 + o(1), \quad\text{as}\quad n \to \infty, \quad n \in \bZ.
\end{equation}
for the eigenvalues  $\Lambda = \{\l_n\}_{n \in \bZ}$ of the
operator $L_{C, D}(Q)$ with regular BC. In
Section~\ref{sec:StrictRegular} we present certain necessary and
sufficient \emph{algebraic} conditions for
equations~\eqref{eq:BC.Reg_B_Con} to determine strictly regular
BC. In particular, we show in
Proposition~\ref{prop_strict_regular} that if $\alpha :=
-b_1/b_2  \not \in \bQ$ and $a=0$, $bc, d \in \bR \setminus
\{0\}$, then BC~\eqref{eq:BC.Reg_B_Con}  are \emph{strictly
regular} if and only if
\begin{equation} \label{eq:a=0.criter_Str_Regularity_Intro}
    d \ne -(\alpha+1)\left(|bc| \alpha^{-\alpha}\right)^{\frac{1}{\alpha+1}}.
\end{equation}
So, under the above  restrictions  condition
\eqref{eq:a=0.criter_Str_Regularity_Intro} gives the
\emph{algebraic criterion of the strict regularity} of boundary
conditions~\eqref{eq:BC.Reg_B_Con}. In
Section~\ref{sec:RieszBasis} we prove our main results on Riesz
basis property of the root vectors system of the operator
$L_{C,D}(Q)$ (Theorem~\ref{th:basis.strict} and
Proposition~\ref{prop:regul.basis.paren}). Finally, in
Section~\ref{sec:Timoshenko} we apply
Theorem~\ref{th:basis.strict} to prove the Riesz basis property
with parentheses for the dynamic generator of the Timoshenko
beam model (see
e.g.~\cite{Tim55,KimRen87,Shub02,XuYung04,XuHanYung07,WuXue11}).

{\bf Notation.} Let $T$ be a closed operator in a Hilbert space
$\fH$. Denote by $\rho(T)$  the set of regular points  of  $T$;
$\sigma(T) = \bC\setminus \rho(T)$ and $\sigma_p(T)$ denote the
spectrum of $T$ and  the point spectrum of  $T$, respectively.

For  the eigenvalue $\l_0\in \sigma_p(T)$  denote by $m_a(\l_0)$
and $m_g(\l_0)$ the algebraic and geometric multiplicities of
$\l_0$, respectively. Recall that $m_g(\l_0) =
\dim(\ker(L-\l_0))$ and $m_a(\l_0)$ is a dimension of the root
subspace corresponding to $\l_0$.

$\bD_r(z) \subset \bC$ denotes the disc of radius $r$ with a
center $z$.

$\left\langle \cdot,\cdot\right\rangle$ denotes the inner
product in $\bC^{n}$; $\bC^{n\times n}$ denotes the set of
$n\times n$ matrices with complex entries; $I_n (\in
\bC^{n\times n})$ denotes the identity matrix.
%
%
\section{Triangular transformation operators}
\label{sec:Transform}
%
%
\subsection{The Banach spaces $X_1$  and $X_\infty$ }
Following~\cite{Mal94} denote by  $X_1:= X_1(\Omega)$ and
$X_\infty := X_\infty(\Omega)$  the linear  spaces composed of
(equivalent classes of) measurable functions defined on $\Omega
= \{(x,t) : 0 \le t \le x \le 1\}$ satisfying
\begin{align}
      \label{eq:B2.norm.def}
        \|f\|_{X_1} &:=  \esssup_{t \in [0,1]} \int_t^1 |f(x,t)| dx < \infty,\\
    \label{eq:B1.norm.def}
        \|f\|_{X_\infty} &:= \esssup_{x \in [0,1]} \int_0^x |f(x,t)| dt < \infty,
\end{align}
respectively.  It can easily be shown that the spaces  $X_1$ and
$X_\infty$ equipped with the norms~\eqref{eq:B2.norm.def} and
\eqref{eq:B1.norm.def} form  Banach spaces that are not
separable. Denote by $X_{1,0}$ and $X_{\infty,0}$  the subspaces
of $X_1$ and $X_\infty$, respectively, obtained by taking the
closure  of continuous functions $f\in C(\Omega)$. Clearly, the
set $C^1(\Omega)$ of smooth functions is also dense in both
spaces $X_{1,0}$ and $X_{\infty,0}$.

To motivate appearance  of the spaces $X_1$ and $X_\infty$
consider a Volterra type operator
    \begin{equation}\label{2.3opNew}
N:\  f\to \int^x_0 N(x,t)f(t)dt
    \end{equation}
with a measurable kernel $N(\cdot,\cdot)$ and denote by $\|N\|_p
:= \|N\|_{L^p[0,1]\to L^p[0,1]}$ the $L_p$-norm of the operator
$N$ provided that it is bounded. The following simple lemma
(cf.~\cite{Mal94}) sheds light on appearance of the spaces $X_1$
and $X_\infty$.

Recall that a Volterra  operator in a Banach space is a compact
operator with zero spectrum.
\begin{lemma}\label{lem_Volterra_operGeneral}
Let $N(\cdot,\cdot)\in X_1(\Omega)\cap  X_\infty(\Omega)$ and
generate the Volterra type operator~\eqref{2.3opNew}. Then:

\textbf{(i)} The  operator $N$ is  bounded  in  $L^p[0,1]$ for
each $p\in [1,\infty]$ and
\begin{equation}\label{2.4Aop}
    \|N\|_p  \le  \|N\|_{X_1(\Omega)}^{1/p} \cdot \|N\|_{X_\infty(\Omega)}^{1-1/p}.
\end{equation}
Moreover,
\begin{equation}\label{2.4opNew}
    \|N\|_{1} = \|N\|_{X_1(\Omega)}, \qquad  \|N\|_{\infty} =   \|N\|_{X_\infty}.
\end{equation}

\textbf{(ii)} If $N(\cdot,\cdot)\in X_{1,0}(\Omega) \cap
X_{\infty,0}(\Omega)$, then $N$ is a Volterra operator in
$L^p[0,1]$ for each  $p\in [1,\infty]$.
\end{lemma}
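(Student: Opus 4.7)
The plan is to treat Part (i) as a family of norm estimates proved by Fubini and interpolation, and Part (ii) as a compactness plus quasi-nilpotency statement derived from Part (i) by approximating $N$ with continuous kernels.

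For Part (i), I would prove the $p=1$ bound by interchanging the order of integration:
\[
\int_0^1 |Nf(x)|\,dx \le \int_0^1 |f(t)|\Bigl(\int_t^1 |N(x,t)|\,dx\Bigr)dt \le \|N\|_{X_1}\|f\|_1,
\]
which yields $\|N\|_1 \le \|N\|_{X_1}$; the matching lower bound follows by testing on $f_\varepsilon = \varepsilon^{-1}\chi_{[t_0,t_0+\varepsilon]}$ at a Lebesgue point $t_0$ of the function $t \mapsto \int_t^1 |N(x,t)|\,dx$ chosen close to its essential supremum. The $p=\infty$ bound is immediate from $|Nf(x)| \le \|f\|_\infty \int_0^x |N(x,t)|\,dt \le \|N\|_{X_\infty}\|f\|_\infty$, and the opposite inequality follows by testing against $f(t) = \sign\overline{N(x_0,t)}$ for a well-chosen $x_0$. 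For intermediate $p \in (1,\infty)$ I would either apply Riesz--Thorin interpolation to the two endpoint bounds, or perform a direct H\"older decomposition $|N(x,t)f(t)| = |N(x,t)|^{1/p}\cdot|N(x,t)|^{1/p'}|f(t)|$ followed by one Fubini, yielding precisely the bound~\eqref{2.4Aop}.

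For Part (ii), I would use the density of $C(\Omega)$ in $X_{1,0} \cap X_{\infty,0}$ to select $N_m \in C(\Omega)$ with $N_m \to N$ in both ambient norms. By Part (i), the associated operators converge to $N$ in the operator norm on every $L^p[0,1]$. Each $N_m$ has a continuous kernel on the compact triangle $\Omega$, hence is a norm limit of finite-rank operators and therefore compact, so the limit $N$ is compact on $L^p$. Each $N_m$ is moreover quasi-nilpotent since the $n$-fold iterated kernel on the simplex obeys the classical bound $M_m^n(x-t)^{n-1}/(n-1)!$ with $M_m = \max_\Omega |N_m|$. To transfer quasi-nilpotency to $N$ I would invoke Riesz spectral projections: if some $\lambda \neq 0$ belonged to $\sigma(N)$, then compactness of $N$ would make $\lambda$ an isolated eigenvalue of finite multiplicity, and the projections $\frac{1}{2\pi i}\oint_\Gamma (z-N_m)^{-1}\,dz$ around a small contour $\Gamma$ encircling $\lambda$ but avoiding $0$ would vanish for all $m$ yet converge in norm to a nonzero spectral projection for $N$, a contradiction.

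The hardest step is this last transfer: quasi-nilpotency does not survive arbitrary operator-norm limits, so the argument only closes because compactness of $N$ forces any nonzero spectrum to consist of isolated eigenvalues of finite multiplicity, permitting the spectral projection trick; a direct iterated-kernel bound would fail because kernels in $X_\infty$ need not be pointwise bounded. A secondary delicate point in Part (i) is the sharpness of the $L^\infty$ bound, where $\sign\overline{N(x_0,t)}$ must be constructed measurably in $t$ for $x_0$ varying over a set of positive measure, which requires the Lebesgue differentiation theorem to avoid null-set pathologies.
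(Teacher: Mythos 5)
Your proposal is correct and follows essentially the same route as the paper: Fubini for the endpoint bounds, Riesz--Thorin interpolation for \eqref{2.4Aop}, and approximation by continuous kernels plus operator-norm convergence for part (ii). The one place you go beyond the paper is welcome rather than divergent: the paper simply asserts that a uniform limit of Volterra operators is Volterra, whereas you correctly supply the justification (compactness of the limit makes any nonzero spectral point isolated with a nonzero Riesz projection, which the vanishing projections of the quasinilpotent approximants would contradict).
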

%
%
\begin{proof}
\textbf{(i)} The relations~\eqref{2.4opNew} are well known (see
e.g.~\cite{Mal99}) and can easily  be proved. Combining the M.
Riesz's interpolation theorem  with relations~\eqref{2.4opNew}
yields
   \begin{equation}\label{2.4Bop}
\|N\|_p  \le  \|N\|_{1}^{1/p} \cdot \|N\|_{\infty}^{1-1/p} = \|N\|_{X_1(\Omega)}^{1/p}
\cdot \|N\|_{X_\infty(\Omega)}^{1-1/p}, \quad p\in [1,\infty].
   \end{equation}
which proves  estimate~\eqref{2.4Aop}.

\textbf{(ii)}  Since $N(\cdot,\cdot)\in X_{1,0}(\Omega) \cap
X_{\infty,0}(\Omega)$, there exists a sequence $N_k(\cdot,
\cdot)\in C^1(\Omega)$, $k\in \Bbb N,$ such that
$\lim_{k\to\infty}(\|N-N_k\|_{X_1(\Omega)} +
\|N-N_k\|_{X_\infty(\Omega)}) = 0.$ In accordance with
~\eqref{2.4Aop}
   \begin{equation}\label{2.7op}
\|N - N_k\|_p  \le  \|N - N_k\|_{X_1(\Omega)}^{1/p} \cdot \|N -
N_k\|_{X_\infty(\Omega)}^{1-1/p},\quad p\in [1,\infty].
   \end{equation}
Since $N_k(\cdot, \cdot)\in C^1(\Omega)$, the operator $N_k$ of
the form \eqref{2.3opNew} is a Volterra operator. It follows
from~\eqref{2.7op} that the operator $N$ of the form
\eqref{2.3opNew}  is the uniform limit in $L^p[0,1]$  of the
Volterra operators $N_k$ and is therefore itself a Volterra
operator.
     \end{proof}
The following simple properties of the class
$X_{\infty,0}(\Omega)$ will be useful in the sequel.
  \begin{lemma}\label{Trace_lemma}
For each $a\in[0,1]$ the trace mapping
   \begin{equation}
i_a:\  C(\Omega)\to C[0,a], \qquad i_a\bigl(N(x,t)\bigr):=N(a,t),
   \end{equation}
originally defined on $C(\Omega)$ admits a continuous extension
(also denoted by $i_a$) as a mapping $X_{\infty,0}(\Omega) \to
L^1[0,a]$ from $X_{\infty,0}(\Omega)$ onto $L^1[0,a]$.
    \end{lemma}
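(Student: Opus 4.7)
The plan is to establish the lemma in three steps: bound the trace map on $C(\Omega)$, extend by density, then verify surjectivity by an explicit construction.

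First I would check that for $N\in C(\Omega)$ the map $i_a$ satisfies $\|i_a(N)\|_{L^1[0,a]}\le \|N\|_{X_\infty}$. For continuous $N$ the function $x\mapsto \int_0^x|N(x,t)|\,dt$ is continuous on $[0,1]$, so the essential supremum in the definition of $\|\cdot\|_{X_\infty}$ coincides with the supremum, and in particular the value at $x=a$ bounds $\|i_a(N)\|_{L^1[0,a]}$. Since $C(\Omega)$ is dense in $X_{\infty,0}(\Omega)$ by definition, a standard extension argument gives a unique bounded linear map $i_a\colon X_{\infty,0}(\Omega)\to L^1[0,a]$ of norm at most one.

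For surjectivity, given $g\in L^1[0,a]$ I would extend it by zero to a function $\widetilde g\in L^1[0,1]$ and set
\begin{equation*}
    N(x,t):=\widetilde g(t),\qquad (x,t)\in\Omega.
\end{equation*}
Then $\|N\|_{X_\infty}=\esssup_{x\in[0,1]}\int_0^x|\widetilde g(t)|\,dt\le\|g\|_{L^1[0,a]}<\infty$, so $N\in X_\infty(\Omega)$. To place $N$ in $X_{\infty,0}(\Omega)$, approximate $\widetilde g$ in $L^1[0,1]$ by a sequence $\widetilde g_k\in C[0,1]$ and let $N_k(x,t):=\widetilde g_k(t)$; these are continuous on $\Omega$ and $\|N-N_k\|_{X_\infty}\le\|\widetilde g-\widetilde g_k\|_{L^1[0,1]}\to 0$. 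Finally, since $i_a$ is continuous on $X_{\infty,0}(\Omega)$ and since $i_a(N_k)=\widetilde g_k|_{[0,a]}\to g$ in $L^1[0,a]$, we get $i_a(N)=g$, proving that the image of $i_a$ is all of $L^1[0,a]$.

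The only mildly delicate point is checking the initial estimate $\|i_a(N)\|_{L^1[0,a]}\le\|N\|_{X_\infty}$ on $C(\Omega)$, which requires the observation that for continuous kernels the essential supremum in the $X_\infty$-norm is attained pointwise; everything else is a routine density-plus-construction argument. No algebraic subtleties arise because we are free to use the zero extension of $g$ across $(a,1]$.
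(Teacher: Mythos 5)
Your proof is correct and the extension part is essentially the paper's argument: both rest on the observation that for continuous kernels the $X_\infty$-essential supremum is an honest supremum, so $\|i_a(N)\|_{L^1[0,a]}\le\|N\|_{X_\infty}$, and then one extends by density of $C(\Omega)$ in $X_{\infty,0}(\Omega)$ (the paper does this by hand via Cauchy sequences rather than quoting the abstract extension theorem, but it is the same computation). You go one step further than the paper: its proof never verifies the stated surjectivity onto $L^1[0,a]$, whereas your zero-extension construction $N(x,t):=\widetilde g(t)$, approximated by $N_k(x,t):=\widetilde g_k(t)$ with $\widetilde g_k\in C[0,1]$, supplies a clean and correct proof of that claim.
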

   \begin{proof}
Let $N(\cdot,\cdot)\in X_{\infty,0}(\Omega)$ and let
$N_k(\cdot,\cdot)\in C(\Omega)$ be a sequence approaching $N$ in
$X_{\infty}(\Omega)$. It follows from definition
\eqref{eq:B1.norm.def} of the norm in  $X_{\infty}(\Omega)$ that
  \begin{equation}
\int^a_0|N_k(a,t) - N_m(a,t)|dt \le  \|N_k-N_m\|_{X_{\infty}}\to 0\quad \text{as}\quad
n,m\to\infty,
  \end{equation}
i.e. the sequence  $N_k(a,\cdot)$ is a Cauchy sequence in
$L^1[0,a].$  Thus, there exists $f_a(\cdot)\in L^1[0,a]$ such
the $\|f_a-N_k(a,\cdot)\|_{L^1[0,a]}\to 0$ as $k\to\infty$. We
put $N(a,\cdot):= f_a(\cdot)$ and extend the mapping $i_a$ to
the space $X_{\infty,0}$  by setting $i_a:\  N(\cdot,\cdot) \to
f_a(\cdot) = N(a,\cdot)$. It is easily seen that this extension
is well defined. Indeed, if $\wt N_k(\cdot,\cdot)$ is another
sequence approaching $N(\cdot,\cdot)$ in $X_{\infty}$. Then
$\lim_{k\to\infty} \|N_k - \wt N_k\|_{X_{\infty}}=0$  and
    \begin{equation}
\lim_{k\to\infty} \|N_k(a,\cdot)- \wt N_k(a,\cdot)\|_{L^1[0,a]} \le \lim_{k\to\infty}
\|N_k - \wt N_k\|_{X_{\infty}}=0.
    \end{equation}
Hence   $\lim_{k\to\infty} \|N(a,\cdot)- \wt
N_k(a,\cdot)\|_{L^1[0,a]} = 0$  and the extension $i_a$ is well
defined.
     \end{proof}

Going over to the vector case we introduce  the Banach  spaces
  \begin{equation}\label{2.8opNew}
X_1^{{2\times 2}} := X_1^{{2\times 2}}(\Omega) := X_1(\Omega)\otimes \Bbb C^{2\times 2}
\quad \text{and} \quad X_\infty^{{2\times 2}} := X_\infty^{{2\times 2}}(\Omega) :=
X_\infty(\Omega)\otimes \Bbb C^{2\times 2}
  \end{equation}
consisting of $2\times 2$ matrix functions $f =
(f_{jk})_{j,k=1}^2$ with entries from $X_1$ and $X_\infty$,
respectively, and  equipped with the norms
  \begin{equation}\label{2.46}
\|f\|_{X_1} :=  \|f\|_{X_1\otimes \Bbb C^{2\times 2}} := \max\{\|f_{jk}\|_{X_1}:
j,k\in\{1,2\}\},
  \end{equation}
    \begin{equation}\label{2.45}
\|f\|_{X_\infty} :=  \|f\|_{X_\infty \otimes \Bbb C^{2\times 2}} :=
\max\{\|f_{jk}\|_{X_\infty}: j,k\in\{1,2\}\}.
    \end{equation}
We also put
$$
X_{1,0}^{{2\times 2}} := X_{1,0}^{{2\times 2}}(\Omega) := X_{1,0}(\Omega)\otimes \Bbb
C^{2\times 2} \quad \text{and} \quad X_{\infty,0}^{{2\times 2}} :=
X_{\infty,0}^{{2\times 2}}(\Omega) := X_{\infty,0}(\Omega)\otimes \Bbb C^{2\times 2}.
$$

Further, equip  the space $L^p([0,1],\bC^2) :=
L^p[0,1]\otimes\bC^2$ of vector functions with the following
norm
    \begin{equation}
\|f\|_p:=\|\col (f_1,{f_2})\|_p := \|f_1\|_p + \|f_2\|_p, \quad  p\in[1,\infty].
    \end{equation}
where $\|f_j\|_p := \|f_j\|_{L^p[0,1]},$\ $j\in\{1,2\}$.

With each measurable kernel  $\bigl(N_{jk}(\cdot,
\cdot)\bigr)^2_{j,k=1}$  one associates a Volterra type operator
    \begin{equation}\label{2.39opNew}
N: \binom{f_1}{f_2}\to \int^x_0 N(x,t)\binom{f_1(t)}{f_2(t)}dt =
 \int^x_0 \begin{pmatrix}
N_{11}(x,t) & N_{12}(x,t) \\
N_{21}(x,t) & N_{22}(x,t)
  \end{pmatrix}
\binom{f_1(t)}{f_2(t)}\,dt.
    \end{equation}
Let us set  $\|N\|_p := \|N\|_{L^p[0,1]\otimes \bC^{2}\to
L^p[0,1]\otimes \bC^{2}}$,  $ p\in[1,\infty]$ provided that the
norm  is bounded.

\begin{lemma}\label{lem_Volterra_Vector_oper}
Let $N(\cdot,\cdot) = \bigl(N_{jk}(\cdot,
\cdot)\bigr)^2_{j,k=1}, \in X_1^{{2\times 2}}(\Omega)\cap
X_\infty^{{2\times 2}}(\Omega)$ and generate the Volterra type
operator by formula~\eqref{2.39opNew}. Then:

\textbf{(i)} The Volterra type  operator $N$ is a bounded
operator in $L^p([0,1],\bC^2)$ for each $p\in [1,\infty]$ and
\begin{equation}\label{2.4AopNew}
\|N\|_p  \le  \|N\|_{X_1^{{2\times 2}}}^{1/p} \cdot \|N\|_{X_\infty^{{2\times
2}}}^{1-1/p}.
   \end{equation}
Moreover,
    \begin{equation}\label{2.14op}
\|N\|_{1} = \|N\|_{X_1^{2\times 2}}  \qquad \text{and} \qquad  \|N\|_{\infty} =
\|N\|_{X_\infty^{2\times 2}}.
    \end{equation}

\textbf{(ii)} If $N(\cdot,\cdot)\in X_{1,0}^{{2\times
2}}(\Omega) \cap  X_{\infty,0}^{2\times 2}(\Omega)$, then $N$ is
a Volterra operator in $L^p([0,1],\bC^2)$  for each $p\in
[1,\infty]$.
\end{lemma}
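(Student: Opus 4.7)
The plan is to reduce Lemma~\ref{lem_Volterra_Vector_oper} to its scalar predecessor Lemma~\ref{lem_Volterra_operGeneral} by working entry-by-entry. Write $N = (N_{jk})_{j,k=1}^{2}$ and regard each $N_{jk}(x,t)$ as a scalar kernel in $X_{1}(\Omega) \cap X_{\infty}(\Omega)$; the associated scalar Volterra-type operators (still denoted $N_{jk}$) then satisfy $\|N_{jk}\|_{p} \le \|N_{jk}\|_{X_{1}}^{1/p}\|N_{jk}\|_{X_{\infty}}^{1-1/p}$ by~\eqref{2.4Aop}, and the endpoint identities~\eqref{2.4opNew} in turn. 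Since $(Nf)_{j}(x) = \sum_{k} \int_{0}^{x} N_{jk}(x,t)f_{k}(t)\,dt$, the additive vector norm $\|f\|_{p}=\|f_{1}\|_{p}+\|f_{2}\|_{p}$ produces at once the componentwise majorant $\|Nf\|_{p} \le \sum_{j,k}\|N_{jk}\|_{p}\,\|f_{k}\|_{p}$, which combined with the matrix norm definitions~\eqref{2.46},~\eqref{2.45} yields the interpolation estimate~\eqref{2.4AopNew}.

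For the endpoint equalities~\eqref{2.14op} I would argue directly. At $p=1$, Fubini applied to each entry gives the matching upper bound, and one attains equality by choosing $f$ of the form $\col(f_{1},0)$ or $\col(0,f_{2})$ localized so as to saturate the single entry realizing $\|N\|_{X_{1}^{2\times 2}}$; the case $p=\infty$ is analogous. Thus~\eqref{2.14op} is obtained in exactly the same way as the scalar identities~\eqref{2.4opNew}, merely promoted entrywise, and completes part~(i).

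For part~(ii) I would use approximation. Choose $N_{k} \in C^{1}(\Omega)\otimes \bC^{2\times 2}$ with $\|N - N_{k}\|_{X_{1}^{2\times 2}} + \|N - N_{k}\|_{X_{\infty}^{2\times 2}} \to 0$, which exists by the definition of $X_{1,0}^{2\times 2}\cap X_{\infty,0}^{2\times 2}$. Applying the already-established~\eqref{2.4AopNew} to $N-N_{k}$ shows that $N_{k} \to N$ in the operator norm of $L^{p}([0,1],\bC^{2})$ for every $p \in [1,\infty]$. Each $N_{k}$ is a Volterra operator: its matrix kernel is continuous on $\Omega$ and supported in the triangle $\{t \le x\}$, so iterating one obtains a pointwise bound of the form $\max_{j,k}|(N_{k}^{n})_{jk}(x,t)| \le (2M_{k})^{n}(x-t)^{n-1}/(n-1)!$ with $M_{k}=\max_{j,k}\|N_{k,jk}\|_{\infty}$. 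This gives $\|N_{k}^{n}\|_{p}^{1/n} \to 0$ and compactness simultaneously, so the spectral radius of $N_{k}$ is zero. The limit $N$ is then compact (as a uniform limit of compact operators) with zero spectral radius by upper semicontinuity of the spectral radius along a norm-convergent sequence of compact operators, hence Volterra.

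The only genuinely new ingredient compared to the scalar case is the iterated-kernel estimate for a $2\times 2$ matrix Volterra operator; here the combinatorial factor of $2$ coming from summing over the intermediate matrix index is harmless because the factorial denominator produced by the triangular support dominates. All other steps are a direct matrix upgrade of the arguments used in Lemma~\ref{lem_Volterra_operGeneral}, so the main obstacle is simply bookkeeping the matrix combinatorics in this one estimate.
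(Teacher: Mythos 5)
Your overall route---redoing the scalar Lemma~\ref{lem_Volterra_operGeneral} entrywise and then passing to a norm limit of smooth kernels---is exactly the one the paper intends (its ``proof'' is the single sentence that the argument is similar to the scalar case). However, the entrywise reduction in part (i) does not deliver the constants you claim. With $\|f\|_p=\|f_1\|_p+\|f_2\|_p$ on vectors but $\|N\|_{X_1^{2\times 2}}=\max_{j,k}\|N_{jk}\|_{X_1}$ on matrix kernels (see \eqref{2.46}, \eqref{2.45}), your majorant $\|Nf\|_p\le\sum_{j,k}\|N_{jk}\|_p\|f_k\|_p$ only yields $\|N\|_p\le 2\max_{j,k}\|N_{jk}\|_p$, and the Fubini computation at $p=1$ gives the column-sum bound $\|N\|_1\le\max_k\bigl(\|N_{1k}\|_{X_1}+\|N_{2k}\|_{X_1}\bigr)$, not $\max_{j,k}\|N_{jk}\|_{X_1}$. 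So the assertion that Fubini produces ``the matching upper bound'' is false: for $N_{11}=N_{21}=g$, $N_{12}=N_{22}=0$ one has $\|N\|_1=2\|g\|_{X_1}$ while $\|N\|_{X_1^{2\times 2}}=\|g\|_{X_1}$, so the equalities \eqref{2.14op} cannot be reached by the entrywise promotion (the sum-versus-max mismatch is built into the paper's normalization; nothing downstream needs more than boundedness up to a constant, but you must either carry the factor $2$ through \eqref{2.4AopNew} or flag the normalization rather than assert equality).

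In part (ii) the conclusion is correct but the justification you give for the limiting step is the wrong one. Upper semicontinuity of the spectrum states that $\sigma(S)$ lies in any prescribed neighborhood of $\sigma(N)$ once $\|S-N\|$ is small; applied to your sequence it gives only $\limsup_k r(N_k)\le r(N)$, i.e.\ $0\le r(N)$, which is vacuous. The inclusion you actually need fails for general norm limits: by Kakutani's classical example, nilpotent operators can converge in norm to an operator with positive spectral radius, so quasinilpotence alone is not preserved. The step is rescued precisely by compactness of the limit: invoke Newburgh's theorem (continuity of the spectrum at operators with totally disconnected spectrum, hence at every compact operator), or argue directly that if $r(N)>0$ then $N$ has an isolated nonzero eigenvalue whose Riesz projection is nonzero and depends norm-continuously on the operator, forcing $r(N_k)>0$ for large $k$---a contradiction. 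With that repair (your iterated-kernel estimate for the smooth approximants is fine; the factor $2^n$ from the intermediate index is indeed absorbed by the factorial), part (ii) is complete.
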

%
%
The proof is similar to that of
Lemma~\ref{lem_Volterra_operGeneral} and is omitted.

Next we demonstrate that the assumption  $N(\cdot,\cdot)\in
X_{1,0}(\Omega) \cap X_{\infty,0}(\Omega)$ in
Lemma~\ref{lem_Volterra_operGeneral}(ii) is essential for the
operator $N$ to be  a Volterra operator.
   \begin{proposition}\label{prop2.3}
Let $k(\cdot) \in L^1[0,1]$, $k(s)s^{-1}\in L^1[0,1]$, and let
$\mathcal M_k(\alpha):= \int^1_0 k(s)s^{\alpha-1}\,ds$ be the
Mellin transform of $k(\cdot)$, $\alpha \in \Bbb C_r:=\{z\in\Bbb
C:\Re z>0\}$. Then the Volterra type operator
   \begin{equation}
\cK :\ f \to   \frac{1}{x}\int^x_0 k\left(\frac{t}{x}\right)f(t)dt.
   \end{equation}
has  the following  properties:

\textbf{(i)} The operator $\cK$ is bounded in $L^p[0,1]$ for
each $p\in[1,\infty]$.

\textbf{(ii)} Its point spectrum is given by $\sigma_p(\cK) =
\text{range}(\mathcal M_k(\cdot)) = \{\int^1_0
k(s)s^{\alpha-1}\,ds:\ \alpha \in \Bbb C_r\}.$ In particular,
$\cK$ is not compact.

\textbf{(iii)} If $k(\cdot)\ge 0$, then the  spectral radius  of
$\cK$ is equal to its norm $\|\cK\|_p$ in each $L^p[0,1]$.
\end{proposition}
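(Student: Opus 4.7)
The plan is to exploit the dilation-invariant (Mellin-convolution) structure of $\cK$; the substitution $s=t/x$ is the key tool throughout.

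For part \textbf{(i)}, I would verify that the kernel $N(x,t)=\frac{1}{x}k(t/x)$ on $\Omega$ lies in $X_1(\Omega)\cap X_\infty(\Omega)$ and then apply Lemma~\ref{lem_Volterra_operGeneral}. The substitution $s=t/x$ in the inner integral gives $\int_0^x |N(x,t)|\,dt=\int_0^1|k(s)|\,ds=\|k\|_{L^1}$, hence $\|N\|_{X_\infty}=\|k\|_{L^1}$. The same substitution applied to the outer integral yields $\int_t^1 x^{-1}|k(t/x)|\,dx=\int_t^1 |k(s)|/s\,ds$, hence $\|N\|_{X_1}=\|k(s)/s\|_{L^1}$. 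Both norms are finite by hypothesis, so Lemma~\ref{lem_Volterra_operGeneral}(i) provides boundedness on every $L^p[0,1]$.

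For part \textbf{(ii)}, the crucial observation is that pure powers $f_\alpha(x):=x^{\alpha-1}$ diagonalize $\cK$. Using $t=xs$ in the identity $\cK f(x)=\int_0^1 k(s) f(xs)\,ds$, a direct computation yields $\cK f_\alpha=\mathcal M_k(\alpha)\,f_\alpha$ for every $\alpha\in\bC_r$, and $f_\alpha$ lies in the ambient $L^p$ as soon as $\Re\alpha$ is large enough. This already gives $\text{range}(\mathcal M_k)\subseteq \sigma_p(\cK)$. For the reverse inclusion, I would pass to Mellin coordinates via $x=e^{-u}$, which turns $\cK$ into convolution on $[0,\infty)$ with kernel $K(v)=k(e^{-v})e^{-v}$; the associated Fourier--Laplace symbol is precisely $\mathcal M_k$, so every eigenvalue must be a value of $\mathcal M_k$ on $\bC_r$. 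Non-compactness is then immediate: $\mathcal M_k$ is analytic and nonconstant on $\bC_r$ (unless $k\equiv 0$), so $\sigma_p(\cK)$ is uncountable and cannot consist of a sequence tending to $0$.

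For part \textbf{(iii)}, assuming $k\ge 0$, Minkowski's integral inequality applied to $\cK f(x)=\int_0^1 k(s) f(xs)\,ds$ together with the dilation estimate $\|f(\cdot s)\|_p\le s^{-1/p}\|f\|_p$ gives $\|\cK\|_p\le \int_0^1 k(s)s^{-1/p}\,ds=\mathcal M_k(1-1/p)$. For the matching lower bound on the spectral radius, part (ii) places $\mathcal M_k(\alpha)$ in $\sigma(\cK)$ for every real $\alpha>1-1/p$; letting $\alpha\downarrow 1-1/p$, monotone convergence gives $\mathcal M_k(\alpha)\uparrow \mathcal M_k(1-1/p)$, and closedness of $\sigma(\cK)$ forces $\mathcal M_k(1-1/p)\in\sigma(\cK)$. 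Combined with the universal bound $r(\cK)\le\|\cK\|_p$, this yields $r(\cK)=\|\cK\|_p$.

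The main obstacle I anticipate is the reverse inclusion in \textbf{(ii)}: one must rule out ``exotic'' eigenfunctions beyond pure powers. The Mellin-transform route sketched above is the most conceptual, but needs care with function spaces (decay/analyticity required for inversion and compatibility with $L^p[0,1]$). A more elementary alternative is to differentiate the identity $\cK f=\mu f$ in $x$ and exploit the resulting homogeneity relation to force $f$ to be a pure power, handling possible multiplicities of $\mathcal M_k$ separately.
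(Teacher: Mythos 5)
Your part (i) coincides with the paper's argument: the substitution $s=t/x$ gives $\|N\|_{X_\infty}=\|k\|_{L^1}$ and $\|N\|_{X_1}=\|k(s)s^{-1}\|_{L^1}$ for the kernel $N(x,t)=x^{-1}k(t/x)$, and Lemma~\ref{lem_Volterra_operGeneral} does the rest. For part (ii) the paper proves exactly what your first step proves, namely that $f_\alpha(x)=x^{\alpha-1}$ satisfies $\cK f_\alpha=\mathcal M_k(\alpha)f_\alpha$ and that an operator with an uncountable set of non-zero eigenvalues cannot be compact --- and then it stops: the reverse inclusion $\sigma_p(\cK)\subseteq\mathrm{range}\,(\mathcal M_k)$ is not proved in the paper, and neither is part (iii). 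Your proof of (iii) is therefore a genuine addition, and it is correct and complete: Minkowski's integral inequality with the dilation bound $\|f(\cdot\,s)\|_p\le s^{-1/p}\|f\|_p$ gives $\|\cK\|_p\le\mathcal M_k(1-1/p)$, while the eigenvalues $\mathcal M_k(\alpha)$ for real $\alpha\downarrow 1-1/p$, monotone convergence, and the closedness of the spectrum give $r(\cK)\ge\mathcal M_k(1-1/p)$; together with $r(\cK)\le\|\cK\|_p$ this closes the circle.

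Two caveats. First, your claim that the power eigenfunctions ``already give $\mathrm{range}\,(\mathcal M_k)\subseteq\sigma_p(\cK)$'' is accurate only in $L^1[0,1]$, where $x^{\alpha-1}$ is integrable for every $\alpha\in\bC_r$; in $L^p$ with $p>1$ the same computation only yields $\{\mathcal M_k(\alpha):\Re\alpha>1-1/p\}$ (compare $k\equiv1$, the Ces\`aro operator, whose point spectrum on $L^2$ is the disc $|\mu-1|<1$ rather than the half-plane $\Re\mu>0$). The paper's proof implicitly works in $L^1$, and your statement should be read the same way. Second, your route to the reverse inclusion in (ii) is only a sketch: after $x=e^{-u}$ the operator becomes $g\mapsto\int_0^\infty K(v)g(u+v)\,dv$ on a half-line, i.e.\ a truncated Wiener--Hopf-type convolution rather than a full-line one, so ``the symbol is $\mathcal M_k$'' does not by itself exclude exotic eigenfunctions; your elementary alternative (differentiating $\cK f=\mu f$) also needs smoothness of $k$ that the hypotheses do not provide. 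Since the paper omits this inclusion as well --- only non-compactness is used later, to show that membership in $X_{1,0}\cap X_{\infty,0}$ is essential for the Volterra property --- this gap is shared with the source, but you should not present the reverse inclusion as settled.
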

   \begin{proof}
\textbf{(i)} Let us check that
$N(x,t)=\frac{1}{x}k(\frac{t}{x})\in X_1(\Omega)\cap
X_{\infty}(\Omega)$. Indeed setting $t/x =s$ one easily gets
    \begin{equation}
\int^1_t|N(x,t)|dx = \int^1_t \frac{1}{x}|k\left(\frac{t}{x}\right)|dx =
 \int^1_t s^{-1}|k(s)|\,ds \le \int^1_0
s^{-1}|k(s)|ds,
    \end{equation}
and
\begin{align}
    \int^x_0|N(x,t)|dt &= \frac{1}{x}  \int^x_0 |k\left(\frac{t}{x}\right)|\,dt =
    \int^1_0\frac{1}{x}|k(s)|x\,ds \nonumber \\
    &= \int^1_0 |k(s)|\,ds \le \int^1_0 s^{-1}|k(s)|ds.
\end{align}
The boundedness of $\cK$ in $L^p[0,1]$, $p\in [1,\infty]$, is
now implied by Lemma \ref{lem_Volterra_operGeneral}.

\textbf{(ii)} Clearly, $f_{\alpha} = x^{\alpha-1} \in L^1[0,1]$
for $\alpha\in \Bbb C_r$ and
   \begin{equation}
(\cK f_{\alpha})(x) =  \frac{1}{x}\int^x_0 k\left(\frac{t}{x}\right) t^{\alpha -1}\,dt =
\frac{1}{x}\int^1_0 k(s)(x s)^{\alpha -1}x\,ds =  M_{k}(\alpha)x^{\alpha -1}.
   \end{equation}
Due to the assumption $k(\cdot)f_{0}(\cdot)\in L^1[0,1]$ one has
   \begin{equation}
|M_k(\alpha)| \le \int^1_0|k(s)s^{\alpha -1}|ds\le\int^1_0|k(s)|s^{-1}\,ds =:
M_{|k|}(0).
   \end{equation}
Since the function $M_k(\cdot)$ is holomorphic and bounded in
$\Bbb C_r$,   it might have at most countable (discrete in $\Bbb
C_r(-1))$ set of zeros. For the rest of $\alpha$th $x^{\alpha}$
is the eigenvector of $R$ in $L^1[0.1]$ belonging to a non-zero
eigenvalue $c(\alpha)$, i.e. $x^{\alpha}\in \ker(\cK -
c(\alpha))$. Hence $\cK$ is not a compact operator.
    \end{proof}

\subsection{Transformation operators}

   \begin{theorem} \label{th:Trans}
Let $Q_{12}, Q_{21}\in L^1[0,1].$  Assume that
$e_{\pm}(\cdot;\l)$ are the solutions of the
system~\eqref{eq:systemIntro} corresponding to the initial
conditions $e_\pm(0;\l)=\binom{1}{\pm1}$. Then
$e_{\pm}(\cdot;\l)$ admits the following representation  by
means of the triangular transformation operator
  \begin{equation}\label{eq:e=(I+K)e0}
    e_{\pm}(x;\l) = (I+K^{\pm})e^0_{\pm}(x;\l)
    = e^0_{\pm}(x;\l) + \int^x_0 K^{\pm}(x,t) e^0_{\pm}(t;\l)dt,
  \end{equation}
where
\begin{equation}\label{eq:e=e0}
    e^0_{\pm}(x;\l)=\binom{e^{ib_1\l x}}{\pm e^{ib_2\l x}}, \qquad
    K^{\pm}(x,t)=\bigl(K^{\pm}_{jk}(x,t)\bigr)^2_{j,k=1},
\end{equation}
and $K^{\pm}(\cdot,\cdot) \in X_{1,0}^{{2\times 2}}(\Omega) \cap
X_{\infty,0}^{2\times 2}(\Omega).$  In particular, the operator
$K^{\pm}: f\to \int^x_0 K^{\pm}(x,t) f(t)dt,$ is a  Volterra
operator in each $L^p[0,1]$, $p\in [1,\infty]$, hence
$\sigma(K^{\pm})=\{0\}$.
\end{theorem}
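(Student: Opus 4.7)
The plan is to substitute the ansatz~\eqref{eq:e=(I+K)e0} into system~\eqref{eq:systemIntro} in order to derive a Goursat-type boundary problem for the $2\times 2$ matrix kernel $K^\pm(x,t)$ on the triangular domain $\Omega=\{0\le t\le x\le 1\}$, reduce this Goursat problem to a closed system of Volterra integral equations by the method of characteristics, and then solve the resulting system by a Neumann series argument in $X_{1,0}^{2\times 2}(\Omega)\cap X_{\infty,0}^{2\times 2}(\Omega)$. Concretely, I would differentiate~\eqref{eq:e=(I+K)e0} in $x$, substitute into~\eqref{eq:systemIntro}, use the identity $\lambda e_\pm^0(t;\lambda)=-iB^{-1}\partial_t e_\pm^0(t;\lambda)$ to rewrite the $\lambda$-term as a $\partial_t$-derivative, and integrate by parts in $t$. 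A Paley--Wiener argument (valid because $b_1<0<b_2$ forces the inverse Fourier transforms of $t\mapsto e^{ib_1\lambda t}$ and $t\mapsto e^{ib_2\lambda t}$ to have supports intersecting only at $\{0\}$) allows me to equate the $\lambda$-coefficients and read off the matrix PDE
\[
B^{-1}\partial_x K^\pm + \partial_t K^\pm\, B^{-1} = -iQ(x)K^\pm\quad\text{on }\Omega,
\]
the diagonal trace relation $Q(x)=i\bigl(B^{-1}K^\pm(x,x)-K^\pm(x,x)B^{-1}\bigr)$ (which, since $Q$ is off-diagonal, determines $K_{12}^\pm(x,x)$, $K_{21}^\pm(x,x)$ explicitly from $Q_{12},Q_{21}$ while leaving $K_{11}^\pm(x,x),K_{22}^\pm(x,x)$ free), and the lateral relation $K^\pm(x,0)B^{-1}\binom{1}{\pm 1}=0$ at $t=0$ arising from the integration-by-parts boundary term.

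Because $B$ is diagonal the matrix PDE splits into four scalar transport equations. The diagonal entries $K_{11}^\pm,K_{22}^\pm$ propagate along the characteristics $x-t=\mathrm{const}$, which enter $\Omega$ through the horizontal side $t=0$; the off-diagonal entries $K_{12}^\pm,K_{21}^\pm$ have characteristic directions $(1/b_1,1/b_2)$ and $(1/b_2,1/b_1)$ respectively which, precisely because $b_1$ and $b_2$ have opposite signs, traverse $\Omega$ transversally from the hypotenuse $t=x$ to the horizontal side $t=0$. Integrating each scalar equation along its characteristic from the appropriate boundary segment, using the diagonal trace as initial data for $K_{12}^\pm,K_{21}^\pm$ and the lateral relation to express $K_{11}^\pm(\cdot,0),K_{22}^\pm(\cdot,0)$ in terms of $K_{12}^\pm(\cdot,0),K_{21}^\pm(\cdot,0)$, yields a closed fixed-point equation $K^\pm = K_0^\pm + \mathbf{T}K^\pm$, with $K_0^\pm$ an explicit function of $Q_{12},Q_{21}$ and $\mathbf{T}$ a linear integral operator acting on $X_1^{2\times 2}\cap X_\infty^{2\times 2}$ whose kernel is linear in $Q$. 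I would solve this by the Neumann series $K^\pm=\sum_{n\ge 0}\mathbf{T}^n K_0^\pm$, the key estimate being
\[
\|\mathbf{T}^n K\|_{X_1^{2\times 2}\cap X_\infty^{2\times 2}}\le \frac{(C\|Q\|_{L^1})^n}{n!}\,\|K\|_{X_1^{2\times 2}\cap X_\infty^{2\times 2}},
\]
which gives absolute convergence. Approximating $Q_{12},Q_{21}$ by continuous functions in $L^1$, the iteration is stable in the $X_1$ and $X_\infty$ norms and the limit kernel lies in $X_{1,0}^{2\times 2}(\Omega)\cap X_{\infty,0}^{2\times 2}(\Omega)$; then Lemma~\ref{lem_Volterra_Vector_oper}(ii) delivers the Volterra-operator property in every $L^p([0,1];\bC^2)$ and hence $\sigma(K^\pm)=\{0\}$.

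The main obstacle is producing the factorial gain $(C\|Q\|_{L^1})^n/n!$ under the weakest natural hypothesis $Q\in L^1$. Classical transformation-operator arguments for Dirac-type systems assume $Q\in L^2$ or better, so that $y\mapsto Qy$ is at least subordinated to $-iB^{-1}\frac{d}{dx}$ and the iteration can be carried out in classical Lebesgue spaces; at the $L^1$ level this subordination fails, and the non-separable spaces $X_1,X_\infty$ introduced in~\S2.1 are essential. The norms~\eqref{eq:B2.norm.def}--\eqref{eq:B1.norm.def} are precisely calibrated to make nested Volterra integrations against $L^1$ weights telescope into an $n!$-denominator, but realizing this telescoping uniformly for all four coupled entries of $K^\pm$ --- despite the obliqueness of the off-diagonal characteristics relative to the triangle $\Omega$ and the fact that $K_{11}^\pm(\cdot,0),K_{22}^\pm(\cdot,0)$ are only implicitly defined through the lateral relation --- is the technical heart of the argument.
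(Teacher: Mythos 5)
Your overall architecture (Goursat problem for the kernel, reduction to integral equations along characteristics, solution in $X_1^{2\times 2}\cap X_\infty^{2\times 2}$, approximation by continuous potentials to land in $X_{1,0}^{2\times 2}\cap X_{\infty,0}^{2\times 2}$, then Lemma~\ref{lem_Volterra_Vector_oper}) is the right one, but your route through it differs from the paper's in two substantive ways. First, the paper does not re-derive the Goursat problem~\eqref{1op}--\eqref{3op} or prove existence for smooth $Q$ at all: it imports both from~\cite{Mal99} (Propositions~\ref{prop_transfor_oper} and~\ref{Prop_smooth_sol}), and the genuinely new work is the Lipschitz stability estimate $\|R-\wt R\|_{X_\infty^{2\times 2}}\le C\|Q-\wt Q\|_{L^1}$ of Proposition~\ref{prop_estimate_for_difference}, proved by Gronwall's lemma, which lets one pass from smooth $Q_n$ to $Q\in L^1$ in Proposition~\ref{prop2.5}. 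Second, and more importantly, the paper does \emph{not} attack the full Goursat problem with the lateral condition~\eqref{3op} directly. It first solves the auxiliary problem~\eqref{4op}--\eqref{6op} with the simpler condition $R_{11}(x,0)=R_{22}(x,0)=0$, under which the integral system~\eqref{12op}--\eqref{13op} decouples into two closed $2\times 2$ blocks $(R_{11},R_{21})$ and $(R_{22},R_{12})$, each with a clean Gronwall structure; the lateral condition~\eqref{3op} is then restored multiplicatively via $I+K^{\pm}=(I+R)(I+P^{\pm})$, where $P^{\pm}$ is a diagonal convolution operator determined by the separate Volterra system~\eqref{2.52op}. Your plan instead keeps~\eqref{3op} inside the fixed-point map, which creates the dependency cycle $K_{11}\to K_{12}|_{t=0}\to K_{22}\to K_{21}|_{t=0}\to K_{11}$ you yourself identify as the technical heart.

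That is where your proposal has a real gap as written: the factorial estimate $\|\mathbf{T}^n K\|\le (C\|Q\|_{L^1})^n/n!\,\|K\|$ for the fully coupled operator is asserted, not proved, and it is exactly the step the paper restructures to avoid. You need to check that the ordering of the integration variables is preserved through the lateral-relation substitution and the oblique change of variables, so that the simplex-volume argument applies; it is quite possible that one only obtains a gain of $1/\lfloor n/4\rfloor!$ per full cycle rather than $1/n!$ per application of $\mathbf{T}$ (which still gives convergence, but the estimate as stated would be wrong). Separately, your final claim that "the iteration is stable in the $X_1$ and $X_\infty$ norms" so that the limit kernel lies in $X_{1,0}^{2\times 2}\cap X_{\infty,0}^{2\times 2}$ is precisely a Lipschitz continuity statement for the map $Q\mapsto K^{\pm}$ on bounded sets of $L^1$; it does not follow from convergence of the Neumann series for a fixed $Q$ and has to be proved by a Gronwall argument applied to differences, i.e.\ the content of the paper's Propositions~\ref{prop_estimate_for_difference} and~\ref{prop2.5} reappears in your route essentially unchanged. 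You also need to observe, as the paper does after~\eqref{2.52op}, that the traces $K_{12}^{\pm}(\cdot,0),K_{21}^{\pm}(\cdot,0)$ entering the lateral relation are well defined in $L^1[0,1]$ for kernels that are a priori only in $X_\infty$, which follows from the explicit form of~\eqref{13op}. If these points are supplied, your argument would be self-contained and somewhat more direct than the paper's, at the cost of a messier coupled iteration.
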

%
%
Our further considerations will substantially be relied on the
following result which is a special case of~\cite[Theorem
1.2]{Mal99} where the general case of $n\times n$ system
\eqref{2op}--\eqref{4op}  with  the  matrix $B=B^*\in
\bC^{n\times n}$ and $Q \in C[0,1]\otimes\bC^{n\times n}$ was
treated.

\begin{proposition}\cite{Mal99}\label{prop_transfor_oper}
Let  $Q= \codiag(Q_{12}, Q_{21})\in C^1[0,1]\otimes\bC^{2\times
2}$.  Then the boundary value problem
    \begin{equation}\label{1op}
B^{-1}D_x K^{\pm}(x,t) + D_t K^{\pm}(x,t)B^{-1} + iQ(x)K^{\pm}(x,t)=0,
    \end{equation}
\begin{equation}\label{2op}
K^{\pm}(x,x)B^{-1} -  B^{-1}K^{\pm}(x,x) = iQ(x),\quad x\in [0,1],
\end{equation}
  \begin{equation}\label{3op}
K^{\pm}(x,0)B \binom{1}{\pm 1} = 0, \quad x\in [0,1].
\end{equation}
has the unique solution $K^{\pm}(\cdot,\cdot)=
\bigl(K_{jk}^{\pm}(\cdot,\cdot) \bigr)^2_{j,k=1} \in
C^1(\Omega)\otimes\bC^{2\times 2}$. Moreover,
$K^{\pm}(\cdot,\cdot)$ is the matrix  kernel  of the
transformation operator \eqref{eq:e=(I+K)e0}.
     \end{proposition}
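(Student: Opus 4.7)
The plan is to reduce the matrix Goursat problem (1)--(3) to a coupled system of Volterra integral equations in $C(\Omega)$, solve it by Picard iteration using the smoothness $Q\in C^1[0,1]$, and then verify the transformation identity by a direct differentiation/integration-by-parts argument.

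First I would write (1) component-wise. With $B=\diag(b_1,b_2)$ and $Q=\codiag(Q_{12},Q_{21})$, the $(j,k)$-entry of (1) reads $b_j^{-1}\partial_x K^\pm_{jk} + b_k^{-1}\partial_t K^\pm_{jk} + i (QK^\pm)_{jk}=0$, so the four scalar PDEs split, at the level of principal symbols, into two pairs $(K_{11},K_{21})$ and $(K_{22},K_{12})$ that interact only through the boundary conditions. The characteristics of the equation for $K_{jk}$ have slope $b_k/b_j$; for $j\ne k$ they are transverse to the diagonal $\{x=t\}$ and to $\{t=0\}$, while for $j=k$ the diagonal itself is characteristic. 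Evaluating (2) on the diagonal gives
\[
K^\pm_{12}(x,x)=\frac{iQ_{12}(x)}{b_2^{-1}-b_1^{-1}},\qquad K^\pm_{21}(x,x)=\frac{iQ_{21}(x)}{b_1^{-1}-b_2^{-1}},
\]
while the $(1,1)$ and $(2,2)$ components give $0=0$, yielding no diagonal data for $K^\pm_{11}, K^\pm_{22}$. That missing data is supplied on $\{t=0\}$ by (3), which algebraically couples $K_{11}$ with $K_{12}$ and $K_{21}$ with $K_{22}$ at $t=0$.

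Next I would integrate each scalar PDE along its characteristic from $(x,t)\in\Omega$ back to the curve on which data are prescribed: for $K^\pm_{12},K^\pm_{21}$ this is the diagonal (using (2)); for $K^\pm_{11},K^\pm_{22}$ this is the axis $\{t=0\}$, where (3) expresses $K^\pm_{12}(x,0),K^\pm_{22}(x,0)$ linearly through $K^\pm_{11}(x,0),K^\pm_{21}(x,0)$. The outcome is a closed system of four Volterra-type integral equations, whose right-hand sides are linear in the unknowns with multiplicative coefficients $Q_{12},Q_{21}\in C^1$. Standard Picard iteration in $C(\Omega)\otimes\bC^{2\times 2}$ with the sup-norm gives the contraction-type estimate
\[
\|K^{(n+1)}-K^{(n)}\|_\infty \le \frac{(C_Q x)^{n}}{n!}\,\|K^{(1)}-K^{(0)}\|_\infty,
\]
hence absolute convergence to a unique continuous solution. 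Because $Q\in C^1$ and the characteristic parametrizations are smooth in $(x,t)$, differentiating the integral equations once and iterating once more promotes the solution to $C^1(\Omega)\otimes\bC^{2\times 2}$, establishing existence and uniqueness.

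Finally, to identify $K^\pm$ as the kernel of the transformation operator, I would set $\tilde e_\pm(x;\l):=e^0_\pm(x;\l)+\int_0^x K^\pm(x,t)\,e^0_\pm(t;\l)\,dt$ and compute $-iB^{-1}\tilde e'_\pm+Q\tilde e_\pm-\l\tilde e_\pm$. Using $(e^0_\pm)'(x;\l)=i\l B e^0_\pm(x;\l)$, rewriting $\l e^0_\pm(t;\l)=-iB^{-1}(e^0_\pm)'(t;\l)$ inside the integral, and performing an integration by parts in $t$, I group the resulting terms into a diagonal-boundary contribution (which vanishes by (2)), an integral whose integrand vanishes pointwise by (1), and a $t=0$ boundary contribution proportional to $K^\pm(x,0)B^{\mp1}(1,\pm1)^T$, which vanishes by (3). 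Hence $\tilde e_\pm$ solves the Dirac equation with $\tilde e_\pm(0;\l)=(1,\pm1)^T=e_\pm(0;\l)$, and by ODE uniqueness $\tilde e_\pm=e_\pm$.

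The main obstacle is the correct setup of the Goursat problem, namely combining the characteristic data of two different sorts — (2) on the diagonal for the off-diagonal entries and (3) on $\{t=0\}$ for the diagonal entries — into a \emph{single} coupled Volterra system whose iteration closes. After that step, convergence and $C^1$-regularity of the Picard scheme, as well as the verification of the transformation identity, are essentially routine given $Q\in C^1[0,1]$.
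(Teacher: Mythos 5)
Your proof is correct, but it takes a genuinely different route from the one the paper (following \cite{Mal99}) uses. You solve the full Goursat problem \eqref{1op}--\eqref{3op} in one shot, as a single coupled Volterra system with mixed characteristic data: diagonal data from \eqref{2op} for the off-diagonal entries $K^{\pm}_{12},K^{\pm}_{21}$, and $t=0$ data from \eqref{3op} for the diagonal entries $K^{\pm}_{11},K^{\pm}_{22}$ (note that for the system to close you should solve the linear relations \eqref{3op} \emph{for the diagonal entries} in terms of the off-diagonal ones at $t=0$, since the latter already carry their own data on $x=t$; you state the relation in the opposite direction, but it is invertible, so this is only a presentational slip). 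The paper instead proceeds in two steps: it first solves the auxiliary problem \eqref{4op}--\eqref{6op}, which is the same PDE and diagonal condition but with the \emph{homogeneous} data $R_{11}(x,0)=R_{22}(x,0)=0$, and then enforces \eqref{3op} a posteriori by the multiplicative correction $I+K^{\pm}=(I+R)(I+P^{\pm})$, where $P^{\pm}$ is a diagonal convolution operator determined by the Volterra system \eqref{2.52op}. Your direct approach is more self-contained for $Q\in C^1$; the paper's factorization buys two things: the auxiliary kernel $R$ is independent of the sign $\pm$, and, more importantly, the whole $L^1$-extension in Theorem \ref{th:Trans} is built on stability estimates for $R$ (Propositions \ref{prop_estimate_for_difference}--\ref{prop2.5}) together with the explicit equation \eqref{2.52op} for $P^{\pm}$, so the factorized form is what actually gets reused later. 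One further remark in your favor: your verification of the transformation identity correctly produces the $t=0$ boundary term $K^{\pm}(x,0)B^{-1}\binom{1}{\pm 1}$; condition \eqref{3op} as printed carries $B$ rather than $B^{-1}$, but the version the paper actually uses in the proof of Theorem \ref{th:Trans} is $a_1K^{\pm}_{j1}(x,0)\pm a_2K^{\pm}_{j2}(x,0)=0$ with $a_j=b_j^{-1}$, which matches your computation.
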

The proof of this result in~\cite{Mal99} is divided in two
steps. At first it is proved solvability (and uniqueness)  of
the certain  auxiliary boundary value problem which in
conformity to the $2\times 2$-case reads as follows
    \begin{equation}\label{4op}
B^{-1}D_x R(x,t) + D_t R(x,t)B^{-1} + iQ(x)R(x,t)=0,
    \end{equation}
\begin{equation}\label{5op}
R(x,x)B^{-1} -  B^{-1}R(x,x) = iQ(x),\quad x\in [0,1],
\end{equation}
   \begin{equation}\label{6op}
R_{11}(x,0)= R_{22}(x,0) =0, \quad x\in [0,1].
  \end{equation}
where $R(x,t) = \bigl(R_{jk}(x,t)\bigr)^2_{j,k=1}$. Let us
recall the corresponding statement  from~\cite{Mal99}.
  \begin{proposition}\label{Prop_smooth_sol}
Let  $Q = \codiag(Q_{12}, Q_{21}) \in
C^1[0,1]\otimes\bC^{2\times 2}$. Then the auxiliary
problem~\eqref{4op}--\eqref{6op} has a solution   $R\in
C^1(\Omega)\otimes\bC^{2\times 2}$.
 Moreover, it is unique  in $X_{\infty}^{2\times 2}(\Omega)$.
    \end{proposition}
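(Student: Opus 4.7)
The plan is to convert the hyperbolic boundary value problem
\eqref{4op}--\eqref{6op} into a system of Volterra-type integral equations
by the method of characteristics, solve it by successive approximations in
$C^1(\Omega)$, and verify uniqueness in $X_\infty^{2\times 2}$ by an
iterative contraction argument that leverages
Lemma~\ref{lem_Volterra_Vector_oper}. Writing
$B^{-1}=\diag(b_1^{-1},b_2^{-1})$ and $Q=\codiag(Q_{12},Q_{21})$, the matrix
PDE \eqref{4op} splits componentwise into
\[
b_j^{-1}\partial_x R_{jk} + b_k^{-1}\partial_t R_{jk}
  = -i\,Q_{j,3-j}(x)\,R_{3-j,k}(x,t),
\qquad j,k\in\{1,2\}.
\]
The diagonal equations ($j=k$) have characteristic direction $(1,1)$, while
the off-diagonal equations have characteristic slope $b_j/b_k<0$ because
$b_1<0<b_2$. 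Consequently the characteristic through any interior point of
$\Omega$ meets exactly one of $\{t=0\}$ or the diagonal $\{t=x\}$.

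Integrating each $R_{jk}$ along its characteristic and using \eqref{6op} for
the diagonal entries and the algebraic jump \eqref{5op} (which determines
the values of $R_{12}$ and $R_{21}$ on $\{t=x\}$ explicitly in terms of
$Q$) for the off-diagonal entries yields
\[
R_{jj}(x,t)=-ib_j\int_0^t
  Q_{j,3-j}(x-t+\tau)\,R_{3-j,j}(x-t+\tau,\tau)\,d\tau
\]
together with analogous Volterra identities for $R_{12}$ and $R_{21}$, in
which integration is performed along the oblique characteristic from the
hit point on the diagonal down to $(x,t)$. Bundled into matrix form the
four identities read $R=R^0+TR$, where $R^0\in C^1(\Omega)\otimes
\bC^{2\times 2}$ depends only on $Q$ and $T$ is a linear integral operator
whose kernel is built from $Q_{12}$ and $Q_{21}$ via shifts and
substitutions along the characteristics.

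Existence would follow from Picard iteration. Under the hypothesis
$Q\in C^1[0,1]\otimes\bC^{2\times 2}$ the operator $T$ sends
$C^1(\Omega)\otimes\bC^{2\times 2}$ to itself, and each application of $T$
integrates over one characteristic segment inside $\Omega$; iterating $n$
times produces an integral over a nested family of triangular subregions,
which gives the factorial bound $\|T^n\|_{C(\Omega)}\le C^n/n!$. Hence the
Neumann series $\sum_{n\ge 0}T^nR^0$ converges uniformly, defining a
continuous solution that, after differentiation of the integral equations
with $Q\in C^1$, belongs to $C^1(\Omega)\otimes\bC^{2\times 2}$ and by
construction satisfies \eqref{4op}--\eqref{6op}.

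For uniqueness in $X_\infty^{2\times 2}$, the difference $\Delta:=R-\wt R$
of two solutions again satisfies the homogeneous integral equation
$\Delta=T\Delta$, all boundary contributions cancelling once the traces on
$\{t=0\}$ and on the diagonal are shown to exist in the appropriate sense
via Lemma~\ref{Trace_lemma} and its diagonal analogue. The same factorial
estimate, now in $X_\infty^{2\times 2}$ by virtue of \eqref{2.14op} and a
Fubini manipulation on nested triangles, yields $\Delta=T^n\Delta\to 0$. I
expect the main technical obstacle to be the bookkeeping for the four
distinct characteristic flows combined with two heterogeneous boundary
components, which makes writing $T$ explicitly and tracking the iterated
domains of integration cumbersome; the key observation that keeps things
under control is that every characteristic through $\Omega$ exits through
exactly one of $\{t=0\}$ or $\{t=x\}$, producing the triangular nesting
responsible for the factorial decay of $\|T^n\|$.
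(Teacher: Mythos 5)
The paper itself gives no proof of this proposition: it is imported from \cite{Mal99}, and the surrounding text only records the equivalent system of characteristic integral equations \eqref{12op}--\eqref{13op}. Your argument is exactly that route: your componentwise characteristic equations and the resulting Volterra identities coincide (up to bookkeeping of the constants $a_j=b_j^{-1}$) with \eqref{12op}--\eqref{13op}, and Picard iteration with the factorial bound $\|T^n\|\le C^n/n!$ (valid because the $x$-projections of the successive characteristic segments nest into a simplex inside $[0,x]$), together with a Gronwall-type estimate in $X_\infty$ for uniqueness, is the standard and evidently intended proof; the uniqueness estimate you need is precisely the computation the paper performs in \eqref{16op}--\eqref{17op} (Proposition~\ref{prop_estimate_for_difference} specialized to $\wt Q=Q$). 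Two minor imprecisions, neither fatal: \eqref{2.14op} is not quite the right tool for the uniqueness step, since there the Volterra operator acts on functions in $L^\infty[0,1]$ whereas your $T$ acts on kernels in $X_\infty^{2\times 2}(\Omega)$ --- what is actually needed, and what works, is the direct bound of $\int_0^x|(T\Delta)(x,t)|\,dt$ by $C\int_0^x|q(u)|\bigl(\int_0^u|\Delta(u,v)|\,dv\bigr)du$ with $q=|Q_{12}|+|Q_{21}|$; and the $C^1$ regularity is not obtained by literally differentiating the integral equations, because the $x$-derivative of the off-diagonal integral term produces $\partial_tR_{kk}$ of the not-yet-known solution, so one must either run the iteration simultaneously for $R$ and its formal first derivatives or argue via smoothness along the two transversal characteristic families. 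With these points tidied up, your proof is correct and essentially the one in the cited source.
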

Our first auxiliary result reads as follows.
  \begin{proposition}\label{prop_estimate_for_difference}
Assume that $Q, \wt Q \in C^1[0,1]\otimes\bC^{2\times 2}$ and
$\|Q\|_{L^1[0,1]\otimes\bC^{2\times 2}}, \|\wt Q\|_{L^1[0,1]}\le
r$. Then there exists a constant $C = C(r, b_1, b_2)$  such that
     \begin{equation}\label{2.12op}
\|R-\wt R\|_{X_{\infty}^{{2\times 2}}} \le C\|Q-\wt Q\|_{L^1[0,1]\otimes\bC^{2\times
2}}.
  \end{equation}
\end{proposition}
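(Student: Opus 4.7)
The plan is to reduce the boundary value problem \eqref{4op}--\eqref{6op} to a Volterra integral system along the characteristic lines of the first order operator $B^{-1}D_x + D_t B^{-1}$, and then obtain the desired estimate by estimating this system in the $X_{\infty}^{2\times 2}$-norm. Write out \eqref{4op} componentwise. Since $Q=\codiag(Q_{12},Q_{21})$, the $(j,k)$-entry of \eqref{4op} reads
\begin{equation*}
    \partial_x R_{jk}(x,t) + \frac{b_j}{b_k}\,\partial_t R_{jk}(x,t) = -ib_j Q_{j,3-j}(x) R_{3-j,k}(x,t),
\end{equation*}
which is a transport equation along the line $t-(b_j/b_k)x=\const$. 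For the diagonal entries $R_{11},R_{22}$ the characteristic slope is $1$, so integrating from the line $t=0$ (where the data \eqref{6op} vanish) gives
\begin{equation*}
    R_{jj}(x,t) = -ib_j\!\int_{x-t}^{x}\! Q_{j,3-j}(u)\,R_{3-j,j}\!\bigl(u,u-(x-t)\bigr)\,du,\qquad j=1,2.
\end{equation*}
For the off-diagonal entries $R_{12},R_{21}$ the characteristic slope is $b_j/b_k<0$, so each characteristic first meets the diagonal $t=x$, where \eqref{5op} prescribes $R_{jk}(x,x)=ib_jb_k Q_{jk}(x)/(b_j-b_k)$. Integrating from the diagonal yields an analogous representation
\begin{equation*}
    R_{jk}(x,t) = \frac{ib_jb_k}{b_j-b_k}Q_{jk}\!\bigl(x_0(x,t)\bigr) -ib_j\!\int_{0}^{s(x,t)}\!Q_{j,3-j}(x_0+u)\,R_{3-j,k}(\dots)\,du,
\end{equation*}
with explicit $x_0(x,t),s(x,t)$ depending only on $b_1,b_2$. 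Collecting these four identities together gives a Volterra system of the form $R=\mathcal{T}(Q)R+\Phi(Q)$ in $X_{\infty}^{2\times 2}(\Omega)$, where $\Phi(Q)$ is the boundary contribution (linear in $Q$).

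The key estimate is that, after the obvious change of variables inside each integral (essentially the one made in the computation of $\|R_{11}\|_{X_\infty}$ below), one obtains
\begin{equation*}
    \|\mathcal T(Q)F\|_{X_{\infty}^{2\times 2}} \le c\,\|Q\|_{L^1}\cdot\|F\|_{X_{\infty}^{2\times 2}},\qquad c=c(b_1,b_2),
\end{equation*}
and, iterating, the standard Volterra bound
\begin{equation*}
    \|\mathcal T(Q)^k\|_{X_{\infty}^{2\times 2}\to X_{\infty}^{2\times 2}} \le \frac{\bigl(c\|Q\|_{L^1}\bigr)^k}{k!},\qquad k\in\bN.
\end{equation*}
To see the base inequality for the diagonal block, for instance, one computes
\begin{equation*}
    \int_0^{x}|R_{11}(x,t)|\,dt
    \le |b_1|\!\int_0^{x}\!\!\int_{x-t}^{x}|Q_{12}(u)||R_{21}(u,u-x+t)|\,du\,dt,
\end{equation*}
and swapping the order of integration together with the substitution $\tau=u-x+t$ gives
\begin{equation*}
    \int_0^{x}|R_{11}(x,t)|\,dt \le |b_1|\!\int_0^{x}\!|Q_{12}(u)|\!\int_0^{u}\!|R_{21}(u,\tau)|\,d\tau\,du
    \le |b_1|\,\|Q_{12}\|_{L^1}\,\|R_{21}\|_{X_{\infty}}.
\end{equation*}
The off-diagonal entries are handled in the same way, using that the affine change of variables along the characteristic has Jacobian depending only on $b_1/b_2$. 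Summing the Neumann series thus gives, for $\|Q\|_{L^1}\le r$,
\begin{equation*}
    \|R\|_{X_{\infty}^{2\times 2}} \le e^{cr}\|\Phi(Q)\|_{X_{\infty}^{2\times 2}} \le C_1(r,b_1,b_2)\|Q\|_{L^1}.
\end{equation*}

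With these preparations the claim is immediate. Put $S:=R-\wt R$; subtracting the integral systems for $R$ and $\wt R$ one gets
\begin{equation*}
    S \;=\; \mathcal T(Q)S \;+\; \bigl(\mathcal T(Q)-\mathcal T(\wt Q)\bigr)\wt R \;+\; \Phi(Q-\wt Q).
\end{equation*}
The right-hand source term is linear in $Q-\wt Q$, and by the same $X_{\infty}^{2\times 2}$ estimates used above,
\begin{equation*}
    \bigl\|(\mathcal T(Q)-\mathcal T(\wt Q))\wt R\bigr\|_{X_{\infty}^{2\times 2}} \le c\,\|Q-\wt Q\|_{L^1}\,\|\wt R\|_{X_{\infty}^{2\times 2}}, \qquad
    \|\Phi(Q-\wt Q)\|_{X_{\infty}^{2\times 2}} \le c'\|Q-\wt Q\|_{L^1}.
\end{equation*}
Since $\|\wt R\|_{X_{\infty}^{2\times 2}}\le C_1(r,b_1,b_2)$ by the previous step, inverting $I-\mathcal T(Q)$ via the Neumann series (whose norm is bounded by $e^{cr}$) yields $\|S\|_{X_{\infty}^{2\times 2}} \le C(r,b_1,b_2)\,\|Q-\wt Q\|_{L^1}$, which is precisely \eqref{2.12op}.

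The main technical obstacle is the sharp Volterra estimate $\|\mathcal T(Q)^k\|\le(c\|Q\|_{L^1})^k/k!$: one must verify that the change of variables used along each characteristic produces the correct pairing between the $L^1$-norm of $Q$ and the $X_{\infty}$-norm of the unknown, and that the off-diagonal characteristics (whose slopes are negative and depend on $b_1/b_2$) admit the same bound with a constant depending only on $b_1,b_2$. Once this is in place, the rest is a standard fixed-point calculation that carries the $L^1$-dependence from $Q$ to the solution $R$ and to the difference $R-\wt R$ without loss.
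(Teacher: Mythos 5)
Your proposal is correct and follows essentially the same route as the paper: both reduce the Goursat problem \eqref{4op}--\eqref{6op} to the integral system \eqref{12op}--\eqref{13op} along characteristics, establish the same change-of-variables estimates pairing the $L^1$-norm of $Q$ with the $X_\infty$-norm of the unknown, and then close the argument by a Gronwall-type iteration. The only difference is packaging — you invert $I-\mathcal T(Q)$ by a Neumann series with the factorial bound, whereas the paper runs the equivalent Gronwall iteration explicitly on the quantities $\widehat{\frak I}_{jk}(x)=\int_0^x|\widehat R_{jk}(x,t)|\,dt$ component by component.
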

\begin{proof}
We put
    \begin{equation}\label{7op}
a_j := b^{-1}_j  \quad \text{and} \quad  \kappa_{jk} := \frac{a_k}{a_j} =
\frac{b_j}{b_k}, \qquad j,k\in\{1,2\}.
    \end{equation}
and
\begin{equation}
\xi_{jk}(x,t)=
\begin{cases}
(a_k x- a_j t)(a_k - a_j)^{-1}, &j\not =k,\\
x-t,  &j=k.
\end{cases}
\end{equation}

Let us rewrite  boundary value problem~\eqref{4op} --
\eqref{6op} in the scalar form
    \begin{equation}\label{8op}
a_k\left(D_x R_{kk}(x,t) + D_t R_{kk}(x,t)\right) =  -i Q_{kj}(x)R_{jk}(x,t), \quad k\in
\{1,2\},
    \end{equation}
 \begin{equation}\label{9op}
a_j D_x R_{jk}(x,t) + a_k D_t R_{jk}(x,t) =  -i Q_{jk}(x)R_{kk}(x,t),\   k\in \{1,2\},\
j\not = k,
   \end{equation}
  \begin{equation}\label{10op}
R_{jk}(x,x)=\frac{i Q_{jk}(x)}{a_k-a_j}, \quad x\in[0,1],
  \end{equation}
%
    \begin{equation}\label{11op}
R_{11}(x,0) = 0, \quad R_{22}(x,0) = 0, \quad x\in[0,1].
   \end{equation}
The system~\eqref{8op} --~\eqref{9op}  is hyperbolic.
Integrating the Goursat problem~\eqref{8op} --~\eqref{11op}
along characteristics   we arrive at the following equivalent
system of integral equations
  \begin{equation}\label{12op}
R_{kk}(x,t) = - \frac{i}{a_j}\int^x_{x-t}Q_{kj}(\xi)R_{jk} \bigl(\xi, \xi -x +
t\bigr)d\xi, \quad k\in \{1,2\},
  \end{equation}
\begin{multline}\label{13op}
    R_{jk}(x,t) = \frac{i}{a_k- a_j}Q_{jk}\left(\frac{a_k x - a_j t}{a_k - a_j}\right) \\
                - \frac{i}{a_k}\int^x_{\xi_{jk}(x,t)}Q_{jk}(\xi)R_{kk}\bigl(\xi,\kappa_{jk}(\xi-x)+t\bigr)d\xi.
\end{multline}
Here $j,k \in \{1,2\}, \  j\not = k$

The functions  $\wt R_{jk}$ satisfy the same
system~\eqref{12op}--\eqref{13op}  with $\wt Q_{jk}$ instead of
$Q_{jk}$, $j,k \in \{1,2\}$. Next we put
  \begin{equation}\label{14op}
\widehat R_{jk}=\wt R_{jk} - R_{jk}, \quad   \widehat Q_{jk}=\wt Q_{jk}-Q_{jk},\qquad
j,k\in\{1,2\}.
     \end{equation}
and
   \begin{equation}\label{15op}
\widehat {\frak I}_{jk}(x) := \int^x_0|\widehat R_{jk}(x,t)|dt, \qquad \frak I_{jk}(x)
:= \int^x_0| R_{jk}(x,t)|dt, \qquad j,k\in\{1,2\}.
 \end{equation}
Making use the change of variables $\xi=u,$\ $ \xi -x +t = v$,
we obtain   from \eqref{12op} that
\begin{align}\label{16op}
\widehat{\frak I}_{11}(x) := \int^x_0|\widehat R_{11}(x,t)|dt \le |b_1| \int^x_0\,dt
\int^x_{x-t}|\widehat Q_{12}(\xi)
 R_{21}(\xi,\xi-x+t)|d\xi  \nonumber \\
+ \  |b_1|  \int^x_0\,dt \int^x_{x-t}|\wt Q_{12}(\xi) \widehat
R_{21}(\xi,\xi-x+t)|d\xi \nonumber   \\
\le |b_1| \int^x_0 |\widehat Q_{12}(u)|\,du \int^u_0 |R_{21}(u, v)|dv
+ |b_1| \int^x_0 |\wt Q_{12}(u)|\,du \int^u_0 | \widehat R_{21}(u,v)|dv \nonumber   \\
= |b_1| \int^x_0 |\widehat Q_{12}(u)| \frak I_{21}(u)\,du  + |b_1| \int^x_0 |\wt
Q_{12}(u)| \widehat{\frak I}_{21}(u)\,du.
\end{align}
Similarly, it follows from~\eqref{13op} and~\eqref{15op}
   \begin{align}
\widehat{\frak I}_{21}(x) =  \int^x_0|\widehat R_{21}(x,t)|dt \le \frac{1}{|a_1-a_2|}
\int^x_0 \left |\widehat Q_{21}\left(\frac{a_1 x-a_2
t}{a_1-a_2}\right)\right| \,dt \nonumber  \\
+ \frac{1}{a_2}\int^x_0 dt\int^x_{\xi_{21}(x,t)}|\widehat
Q_{21}(\xi)R_{21}\bigl(\xi,\kappa_{21}(\xi-x)+t\bigr)|d\xi   \nonumber  \\
+ \frac{1}{a_2}\int^x_0 dt\int^x_{\xi_{21}(x,t)}|\wt Q_{21}(\xi) \widehat
R_{21}\bigl(\xi,\kappa_{21}(\xi-x)+t\bigr)|d\xi.
  \end{align}

Making use the change of variables $\xi=u,$\
$\frac{a_1}{a_2}(\xi-x)+t=v$, we obtain
       \begin{align}\label{17op}
\widehat{\frak I}_{21}(x) = \frac{1}{a_2} \int^x_{\frac{a_1 x}{a_1-a_2}}|\widehat
Q_{21}(u)|du + \frac{1}{a_2}\int^x_{\frac{a_1 x}{a_1-a_2}}|\widehat
Q_{21}(u)|du\int^u_{\frac{a_1}{a_2}(u-x)}|R_{11}(u,v)|dv \nonumber  \\
+ \frac{1}{a_2}\int^x_{\frac{a_1 x}{a_1-a_2}}|\wt
Q_{21}(u)|du\int^u_{\frac{a_1}{a_2}(u-x)}|\widehat R_{11}(u,v)|dv \nonumber   \\
\le \frac{1}{a_2} \int^x_0 |\widehat Q_{21}(u)|du + \frac{1}{a_2}\int^x_0 |\widehat
Q_{21}(u)|du\int^u_0 |R_{11}(u,v)|dv \nonumber  \\
+ \frac{1}{a_2}\int^x_0 |\wt
Q_{21}(u)|du\int^u_0 |\widehat R_{11}(u,v)|dv \nonumber   \\
= \frac{1}{a_2} \int^x_0|\widehat Q_{21}(t)|dt + \frac{1}{a_2}\int^x_0\frak
I_{11}(u)|\widehat Q_{21}(u)|du
+ \frac{1}{a_2}\int^x_0 \widehat{\frak I}_{11}(u)|\wt Q_{21}(u)|du.
\end{align}
Let
\begin{equation}\label{18op}
C_j=\|\frak I_{j1}\|_{L^{\infty}[0,1]}=\|R_{j1}\|_{X_\infty(\Omega)},\qquad j\in\{1,2\}.
  \end{equation}
Estimate~\eqref{16op} with account of~\eqref{18op} yields
   \begin{equation}\label{19op}
\widehat{\frak I}_{11}(x)   \le |b_1| C_2 \|\widehat Q_{12}\|_{L^1} +
|b_1|\int^x_0\widehat{\frak I}_{21}(u)|\wt Q_{12}(u)|du.
   \end{equation}
Combining this inequality with~\eqref{17op} implies
    \begin{align}
\widehat{\frak I}_{21}(x) \le \left(\frac{1}{a_2} + \frac{C_1}{a_2}\right)
\int^x_0|\widehat
Q_{21}(u)|du + \frac{1}{a_2} \int^x_0\widehat{\frak I}_{11}(u)|\wt Q_{21}(u)|du  \nonumber \\
\le C_1'\|\widehat Q_{21}\|_{L^1} + |b_1b_2|C_2\|\widehat
Q_{12}\|_{L^1}\cdot\|\wt Q_{21}\|_{L^1}
+ |b_1 b_2|\int^x_0|\wt Q_{21}(u)|du \int^u_0\widehat{\frak I}_{21}(t)|\wt Q_{12}(t)|dt \nonumber \\
\le C_3 \left(\|\widehat Q_{21}\|_{L^1} + \|\widehat Q_{12}\|_{L^1}\right) + |b_1
b_2|\int^x_0\widehat{\frak I}_{21}(t)|\wt
Q_{12}(t)|dt\int^x_t|\wt Q_{21}(u)|du  \nonumber \\
\le  C_3 \left(\|\widehat Q_{21}\|_{L^1} + \|\widehat Q_{12}\|_{L^1}\right) +    |b_1
b_2| \cdot \|\wt Q_{21}\|_{L^1} \int^x_0\widehat {\frak I}_{21}(t)|\wt Q_{12}(t)|dt,
  \end{align}
where
$$
C_1' := \frac{1 + C_1}{a_2} \qquad \text{and}\qquad  C_3 := \max\{C_1', |b_1b_2|C_2
\|\wt Q_{21}\|_{L^1}\}.
$$

Applying Cronwall's lemma to this inequality implies
    \begin{equation}\label{2.29op}
\widehat{\frak I}_{21}(x) \le  C_3 \left(\|\widehat Q_{21}\|_{L^1} + \|\widehat
Q_{12}\|_{L^1}\right) \exp\left(|b_1 b_2|\cdot\|\wt Q_{21}\|_{L^1}\int^x_0|\wt
Q_{12}(t)|dt\right).
    \end{equation}
Inserting this inequality in~\eqref{19op} we arrive at the
inequality
    \begin{equation}\label{2.30op}
\widehat{\frak I}_{11}(x) \le  |b_1| \left(\|\widehat Q_{12}\|_{L^1} + \|\widehat
Q_{21}\|_{L^1}\right) \left(C_2 + C_3\|\wt Q_{12}\|_{L^1}\exp\left(|b_1 b_2|\cdot\|\wt
Q_{12}\|_{L^1}\cdot\|\wt Q_{21}\|_{L^1}\right)\right)
    \end{equation}
Similar reasoning leads to similar  estimates for
$\widehat{\frak I}_{12}$ and $\widehat{\frak I}_{22}$. Combining
these estimates with~\eqref{2.29op} and~\eqref{2.30op} we arrive
at~\eqref{2.12op}.
          \end{proof}
%

   \begin{proposition}\label{prop2.5}
Let $Q = \codiag(Q_{12}, Q_{21}) \in L^1[0,1]\otimes
\bC^{2\times 2}.$ Then the system of integral
equations~\eqref{12op} --~\eqref{13op} has a unique solution $R
= (R_{jk})_{j,k=1}^2$ belonging to  $X_{\infty,0}^{2\times
2}(\Omega)$. Moreover, $(R_{jk})_{j,k=1}^2 \in X_{1,0}^{{2\times
2}}(\Omega) \cap  X_{\infty,0}^{2\times 2}(\Omega)$.

Further, let $Q_n = \codiag(Q_{12,n}, Q_{21,n}) \in
C^1[0,1]\otimes \bC^{2\times 2}$ be any sequence approaching $Q$
in $L^1[0,1]$-norm and let $R_{n}= (R_{jk,n})_{j,k=1}^2 \in
C^1(\Omega)\otimes \bC^{2\times 2}$ be the corresponding system
of solutions of the problem~\eqref{4op}--\eqref{6op} with $Q_n$
instead of $Q$. Then there exists a constant $C = C(Q, b_1,b_2)$
not depending on $n$ and such that the following estimates hold
  \begin{equation}\label{main_estimate}
\|R_{jk}-R_{jk,n}\|_{X_1} + \|R_{jk}-R_{jk,n}\|_{X_\infty}\le  C \|Q -
Q_{n}\|_{L^1\otimes \bC^{2\times 2}}, \quad j,k\in \{1,2\},\ n\in \bN.
  \end{equation}
\end{proposition}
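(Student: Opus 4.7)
The plan is to obtain $R$ as the $L^1$-continuous extension of the smooth solutions provided by Proposition~\ref{Prop_smooth_sol}. Pick any sequence $Q_n = \codiag(Q_{12,n}, Q_{21,n}) \in C^1[0,1]\otimes\bC^{2\times 2}$ with $\|Q_n - Q\|_{L^1\otimes\bC^{2\times 2}} \to 0$ and $\sup_n \|Q_n\|_{L^1} \le r < \infty$; by Proposition~\ref{Prop_smooth_sol} each $Q_n$ gives a classical solution $R_n = (R_{jk,n}) \in C^1(\Omega)\otimes\bC^{2\times 2}$ of \eqref{4op}--\eqref{6op}. Applying Proposition~\ref{prop_estimate_for_difference} to the pair $(Q_n,Q_m)$ produces
\[
    \|R_n - R_m\|_{X_\infty^{2\times 2}} \le C(r, b_1, b_2)\,\|Q_n - Q_m\|_{L^1\otimes\bC^{2\times 2}},
\]
so $\{R_n\}$ is Cauchy in $X_\infty^{2\times 2}$. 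Since each $R_n$ lies in $C(\Omega)\otimes\bC^{2\times 2}$, the limit $R$ belongs to $X_{\infty,0}^{2\times 2}(\Omega)$.

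The next task is a matching $X_1$-stability estimate, which is the main technical obstacle. In Proposition~\ref{prop_estimate_for_difference} the quantities
$\widehat{\frak I}_{jk}(x) = \int_0^x |\widehat R_{jk}(x,t)|\,dt$ closed a Gronwall loop after the substitutions $(\xi,\xi-x+t)\to(u,v)$ and $(\xi,\kappa_{jk}(\xi-x)+t)\to(u,v)$. To control the $X_1$-norm one introduces the dual quantities
\[
    \widehat{\frak J}_{jk}(t) := \int_t^1 |\widehat R_{jk}(x,t)|\,dx,
\]
and performs the symmetric pair of changes of variables in \eqref{12op}--\eqref{13op} (interchanging the roles of $x$ and $t$ in the iteration). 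The coupled system obtained has the same structure as \eqref{16op}--\eqref{17op}, and the fact that $a_1 = b_1^{-1} < 0 < a_2 = b_2^{-1}$ gives $\kappa_{jk} < 0$ for $j\ne k$, so the characteristic lines of \eqref{8op}--\eqref{9op} meet the boundary of $\Omega$ only on the sides $t=0$ and $x=1$; this lets us majorize the limits of integration by $[t,1]$ in the outer variable. Another application of Gronwall's inequality then yields a constant $C' = C'(r, b_1, b_2)$ with
\[
    \|R - \wt R\|_{X_1^{2\times 2}} \le C'\,\|Q - \wt Q\|_{L^1\otimes\bC^{2\times 2}}
\]
for smooth $Q, \wt Q$ of $L^1$-norm at most $r$.

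Applied to $(Q_n,Q_m)$ this shows $\{R_n\}$ is also Cauchy in $X_1^{2\times 2}$, hence $R \in X_{1,0}^{2\times 2}(\Omega)\cap X_{\infty,0}^{2\times 2}(\Omega)$. The limit inherits the integral equations \eqref{12op}--\eqref{13op}: by Lemma~\ref{lem_Volterra_Vector_oper} each Volterra operator $f\mapsto\int_\cdot^x Q_{jk,n}(\xi) f(\xi,\cdot)\,d\xi$ converges to the corresponding operator with kernel $Q_{jk}$ in operator norm on $X_\infty^{2\times 2}$ and $X_1^{2\times 2}$, so one can pass to the limit termwise in \eqref{12op}--\eqref{13op}. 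Estimate \eqref{main_estimate} now follows by applying the two Cauchy estimates above to the pair $(Q, Q_n)$ (understood in the limiting sense) together with the triangle inequality.

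Uniqueness in $X_{\infty,0}^{2\times 2}(\Omega)$ is a direct Gronwall argument on \eqref{12op}--\eqref{13op}: the difference $\widehat R$ of two solutions with the same $Q$ satisfies the homogeneous version of \eqref{16op}--\eqref{17op} (that is, with $\widehat Q_{jk} \equiv 0$), so $\widehat{\frak I}_{jk}(x) \le C\int_0^x \widehat{\frak I}_{jk}(u)|Q_{12}(u) + Q_{21}(u)|\,du$, which forces $\widehat{\frak I}_{jk} \equiv 0$, i.e.\ $R = \wt R$.
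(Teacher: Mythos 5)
Your overall architecture --- smooth approximation via Proposition~\ref{Prop_smooth_sol}, the stability estimate of Proposition~\ref{prop_estimate_for_difference} to produce a Cauchy sequence in $X_\infty^{2\times 2}$, and passage to the limit in the integral equations --- coincides with the paper's. The genuine gap is in your treatment of the $X_1$-estimate. You propose to close a second, ``dual'' Gronwall loop in the quantities $\widehat{\mathfrak J}_{jk}(t)=\int_t^1|\widehat R_{jk}(x,t)|\,dx$ by ``interchanging the roles of $x$ and $t$ in the iteration''. This does not work as stated: equations \eqref{12op}--\eqref{13op} are not symmetric in $x$ and $t$ (they integrate along characteristics emanating from $\{t=0\}\cup\{x=t\}$), and when one integrates them over $x\in[t,1]$ and performs the change of variables $u=\xi$, $v=\kappa_{jk}(\xi-x)+t$ (resp.\ $v=\xi-x+t$), Fubini produces inner integrals of the form $\int^u|\widehat R_{kk}(u,v)|\,dv$, i.e.\ integrals over the \emph{second} argument. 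These are the $X_\infty$-type quantities $\widehat{\mathfrak I}_{kk}(u)$, not the dual quantities $\widehat{\mathfrak J}_{kk}$, so the system you obtain does not close in the $\widehat{\mathfrak J}_{jk}$ alone and ``another application of Gronwall'' is not available.

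The paper's proof shows that no second Gronwall argument is needed: precisely because the inner integrals that appear are of $X_\infty$ type, one simply substitutes the already established bounds $\|\widehat R_{kk,n}\|_{X_\infty}\le C\|Q-Q_n\|_{L^1}$ and $\sup_n\|R_{kk,n}\|_{X_\infty}<\infty$ into the resulting inequality (see \eqref{2.39op_New} and \eqref{2.40op}), and the $X_1$-part of \eqref{main_estimate} drops out in one step. Your $X_1$ step should be replaced by this direct computation. Two smaller points: your termwise limit passage in \eqref{12op}--\eqref{13op} via operator-norm convergence glosses over the pointwise term $Q_{jk}\left(\xi_{jk}(x,t)\right)$ in \eqref{13op}, which for $Q\in L^1$ is defined only almost everywhere; the paper handles this by first integrating the equation in $t$ and then applying dominated convergence. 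Your uniqueness argument via the homogeneous Gronwall inequality for $\widehat{\mathfrak I}_{jk}$ is fine and supplies a detail the paper leaves implicit.
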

\begin{proof}
(i) Choose sequences $\{Q_{12, n}\}_{n\in \bN},
\{Q_{21,n}\}_{n\in \bN} \subset C^1[0,1]$ such that
    \begin{equation}\label{2.32}
\|Q_{12}-Q_{12,n}\|_{L^1} + \|Q_{21}-Q_{21,n}\|_{L^1} \to 0\quad \text{as}\  n\to\infty.
    \end{equation}
By Proposition~\ref{Prop_smooth_sol}, for each pair
$Q_n=\{Q_{12, n},Q_{21,n}\}$ there exists the unique matrix
solution $R_n=(R_{j k, n})^2_{j,k=1}\in C^1(\Omega) \otimes
\bC^{2\times 2},$ $n \in \bN$  of the system
\eqref{12op}--\eqref{13op}. It follows from~\eqref{2.32} and
\eqref{2.12op}  that there exists $R = (R_{jk})^2_{j,k=1} \in
X_{\infty}^{{2\times 2}}(\Omega)$ such that
      \begin{equation}\label{2.33A}
\lim_{n\to\infty}\|R_{jk,n} - R_{jk}\|_{X_{\infty}(\Omega)} = 0, \quad j,k\in \{1,2\}.
      \end{equation}
Let us show that $\{R_{jk}(\cdot,\cdot)\}^2_{j,k=1}$ satisfies
the system \eqref{12op}--\eqref{13op}. Let for instance equation
$j\not = k$. Writing down equation \eqref{13op} for
$R_{jk,n}(\cdot,\cdot)$ and integrating it with respect to $t$
from $0$ to $x$ one gets (cf.~\eqref{17op})
    \begin{equation}\label{2.33}
\int^x_0 R_{jk,n}(x,t)\,{dt} =  -\frac{i}{a_j} \int^x_{\frac{a_k x}{a_k - a_j}}
Q_{jk,n}(u)\,du -  \frac{i}{a_j}\int^x_{\frac{a_k x}{a_k - a_j}}
Q_{jk,n}(u)\,du\int^u_{\frac{a_k}{a_j}(u-x)}R_{kk,n}(u,v)\,dv.
    \end{equation}
It follows from estimate~\eqref{2.29op} that
$\lim_{n\to\infty}\int^{v_2}_{v_1}R_{kk, n}(u,v)dv =
\int^{v_2}_{v_1}R_{kk}(u,v)dv$ for any pair $v_1,v_2\in[0,1]$.
Therefore and due to~\eqref{2.33A} the dominated convergence
theorem applies  as $n\to\infty$ in~\eqref{2.33} and gives
  \begin{align}
\int^x_0 R_{jk}(x,t)\,{dt} = - \frac{i}{a_j} \int^x_{\frac{a_k x}{a_k - a_j}}
Q_{jk}(u)\,du -  \frac{i}{a_j}\int^x_{\frac{a_k x}{a_k - a_j}}
Q_{jk}(u)\,du\int^u_{\frac{a_k}{a_j}(u-x)}R_{kk}(u,v)\,dv \nonumber \\
= \int^x_0  \left[\frac{i}{a_k- a_j}Q_{jk}\left(\frac{a_k x - a_j t}{a_k - a_j}\right) -
\frac{i}{a_k}\int^x_{\xi_{jk}(x,t)}Q_{jk}(\xi)R_{kk}\bigl(\xi,\kappa_{jk}(\xi-x)+t\bigr)d\xi\right]dt.
    \end{align}
The latter  is equivalent to~\eqref{13op}. The equations for
$R_{jj}(\cdot,\cdot)$, $j\in \{1,2\}$, is  obtained similarly.

(ii)  Since the sequence $Q_n(\cdot)$ approaches $Q(\cdot)$ in
$L^1$-norm, it is bounded, $\|Q_n\|_{L^1\otimes\bC^{2\times
2}}\le C_1 = C_1(Q, B)$, $n\in \bN.$ Therefore
Proposition~\ref{prop_estimate_for_difference} applies and gives
  \begin{equation}\label{main_estimate_infty}
 \|R_{jk}-R_{jk,n}\|_{X_\infty}\le  C \|Q - Q_{n}\|_{L^1\otimes \bC^{2\times 2}},
\quad j,k\in \{1,2\}.
  \end{equation}

Next we prove  similar estimate in $X_1(\Omega)$-norm. We let
  \begin{equation}\label{14opSecond}
\widehat R_{jkn}= R_{jk} - R_{jkn}, \quad   \widehat Q_{jkn}= Q_{jk} - Q_{jkn},  \quad
j,k\in\{1,2\},\ n\in \bN.
     \end{equation}

First we prove estimate~\eqref{main_estimate} for  the case
$j\not =k$. To this end  we
 note that
    \begin{equation}
 \int^1_t \widehat Q_{jkn}\left(\frac{a_k x - a_j t}{a_k - a_j}\right)\,dx =
\int^1_t \widehat Q_{jkn}\left(\xi_{jk}(x,t)\right)\,dx  = \frac{{a_k - a_j}}{a_k}
\int^{\xi_{jk}(1,t)}_t \widehat Q_{jkn}(u)\,du.
    \end{equation}
where  $\xi_{jk}(\cdot,\cdot)$  is given by~\eqref{7op}.

Further, note that $R_{jkn}(\cdot,\cdot)$  satisfies
equation~\eqref{13op} with $Q_{jkn}$ in place  of  $Q_{jk}$,
$j,k\in \{1,2\}.$ Taking difference of this equation
\eqref{13op}, then integrating the difference with respect to
$x\in [t,1],$ and making use the change of variables $u = \xi,$
$v = (\xi-x)\kappa_{lk} + t = (\xi-x)\frac{a_k}{a_j} + t$, we
obtain
   \begin{align}\label{2.39op_New}
 \int^1_t|\widehat R_{jk,n}(x,t)|dx = \int^1_t|R_{jk}(x,t)-R_{jk,n}(x,t)|dx \nonumber \\
\le \frac{1}{|a_k|} \int^{\xi_{jk}(1,t)}_t |\widehat Q_{jk,n}(u)|\,du
+ \frac{1}{|a_j|} \int^1_t dx\int^x_{\xi_{jk}(x,t)}|Q_{jk}(\xi)\widehat R_{kk,n}(\xi,\kappa_{jk}(\xi-x)+ t)|d\xi \nonumber \\
+ \frac{1}{|a_j|} \int^1_t dx\int^x_{\xi_{jk}(x,t)}|\widehat Q_{jk,n}(\xi)R_{kk,n}(\xi,\kappa_{jk}(\xi-x)+t)|d\xi   \nonumber \\
= \frac{1}{|a_k|} \int^{\xi_{jk}(1,t)}_t |\widehat Q_{jk,n}(u)|\,du + \frac{1}{|a_j|}
\int_t^{\xi_{jk}(1,t)}\,dv\int^{(v-t)\frac{a_j}{a_k} + 1}_v|Q_{jk}(u) \widehat R_{kk,n}(u,v)|du \nonumber \\
+ \frac{1}{|a_j|} \int_t^{\xi_{jk}(1,t)} dv\int^{(v-t)\frac{a_j}{a_k}+1}_v|\widehat
Q_{jk,n}(u) R_{kk,n}(u,v)|du \nonumber   \\
\le \frac{1}{|a_k|} \int^1_t |\widehat Q_{jk,n}(u)|\,du + \frac{1}{|a_j|}
\int_t^1\,dv\int^1_v|Q_{jk}(u) \widehat R_{kk,n}(u,v)|du \nonumber \\
+ \frac{1}{|a_j|} \int_t^1\,dv \int^1_v|\widehat Q_{jk,n}(u) R_{kk,n}(u,v)|du \nonumber \\
=  \frac{1}{|a_k|} \int^1_t |\widehat Q_{jk,n}(u)|\,du +  \frac{1}{|a_j|}
\int_t^1\,|Q_{jk}(u)|\,du\int^u_t|\widehat R_{kk,n}(u,v)|\,dv \nonumber \\
+ \frac{1}{|a_j|} \int_t^1 |\widehat Q_{jk,n}(u)|\,du \int^1_v |R_{kk,n}(u,v)|dv \nonumber \\
\le |b_k|\cdot\|\widehat Q_{jk,n}\|_{L^1[t,1]} + |b_j|\cdot \|Q_{jk}\|_{L^1[t,1]}
\cdot\|\widehat R_{kk,n}\|_{X_{\infty}}  \nonumber \\
 + \  |b_j|\cdot \|\widehat Q_{jk,n}\|_{L^1[t,1]} \cdot\|R_{kk,n}\|_{X_{\infty}}.
    \end{align}
Here we use simple inequalities $\xi_{jk}(1,t)\le 1$ and
${(v-t)\frac{a_j}{a_k} + 1}\le 1$. The latter holds since $t\ge
v$ and ${a_j}{a_k}<0$. It follows  from \eqref{2.39op_New}  with
account of definition
\eqref{eq:B2.norm.def}--\eqref{eq:B1.norm.def} that
\begin{multline}\label{2.40op}
    \|R_{jk} - R_{jk,n}\|_{X_1} \le  |b_k|\cdot\|\widehat
    Q_{jk,n}\|_{L^1[0,1]} \\ + |b_j|\left(\|Q_{jk}\|_{L^1}\cdot
    \|\widehat R_{kk,n}\|_{X_{\infty}} +
    \|\widehat Q_{jk,n}\|_{L^1}\cdot \|R_{kk,n}\|_{X_{\infty}}\right), \ j\not = k.
\end{multline}
On the other hand,  estimate~\eqref{main_estimate_infty} implies
$\lim_{n\to\infty}\|R_{jk}-R_{jk,n}\|_{X_\infty}=0.$ Therefore
there exists $C_2>0$ such that $\max\{\|R_{jk,n}\|_{X_\infty}:
j,k\in \{1,2\}, n\in \bN\}\le C_2$. Combining this estimate with
\eqref{main_estimate_infty} yields the following estimate
  \begin{equation}\label{main_estimate_L1_new}
 \|R_{jk}-R_{jk,n}\|_{X_1}\le  C_3 \|Q - Q_{n}\|_{L^1\otimes \bC^{2\times 2}},
\quad j,k\in \{1,2\},
  \end{equation}
with a ceratin positive constant $C_3>0$ not depending on $n\in
\Bbb N.$ Combining this estimate
with~\eqref{main_estimate_infty} implies~\eqref{main_estimate}
for  $j \not =k$.

(iii)  Going over to the case $j = k$ we start with equation
~\eqref{12op} and similar equation for $R_{kk,n}(\cdot,\cdot)$
which holds with $Q_{jk,n}$ in place  of  $Q_{jk}$, $j,k\in
\{1,2\}$.  Taking difference of this equation and~\eqref{12op},
then integrating the difference with respect to $x\in [t,1],$
and then making use the change of variables $\xi=u, \xi-x+t=v$,
obtain as
   \begin{align}
|a_j|\int^1_t|\widehat R_{jj,n}(x,t)|dx = |a_j|\int^1_t|R_{jj}(x,t)-R_{jj,n}(x,t)|dx   \nonumber \\
\le\int^1_t dx\int^x_{x-t}|Q_{jk}(\xi)\widehat R_{kj,n}(\xi,\xi-x+t)|d\xi +
\int^1_t dx\int^x_{x-t}|\widehat Q_{jk,n}(\xi)R_{kj,n}(\xi,\xi-x+t)|d\xi \nonumber  \\
= \int^t_0 dv\int^{v-t+1}_v|Q_{jk}(u) \widehat R_{kj,n}(u,v)|du  +
\int^t_0 dv\int^{v-t+1}_v|\widehat Q_{jk,n}(u) R_{kj,n}(u,v)|du \nonumber \\
\le \int^1_0 dv\int^{1}_v|Q_{jk}(u) \widehat R_{kj,n}(u,v)|du  +
\int^1_0 dv\int^{1}_v|\widehat Q_{jk,n}(u) R_{kj,n}(u,v)|du \nonumber  \\
 =  \int^1_0|Q_{jk}(u)|du\int^u_0|\widehat R_{kj,n}(u,v)|dv + \int^1_0|\widehat
Q_{jk,n}(u)|du\int^u_0|R_{kj,n}(u,v)|dv.
   \end{align}
It follows with account of definition~\eqref{eq:B2.norm.def} --
\eqref{eq:B1.norm.def} that
  \begin{equation}
\|R_{jj}-R_{jj,n}\|_{X_1}\le |b_j|\left(\|Q_{jk}\|_{L^1}\cdot \|\widehat
R_{kj,n}\|_{X_{\infty}} +
 \|\widehat Q_{jk,n}\|_{L^1}\cdot \|R_{kj,n}\|_{X_{\infty}}\right), \quad j\in \{1,2\}.
  \end{equation}
Since $\|R_{jk,n}\|_{X_\infty} \le C_2$ for $n\in \bN$,  this
estimate together with \eqref{main_estimate_infty} leads to the
estimate~\eqref{main_estimate} with $j=k$.
    \end{proof}
%
%
\begin{lemma}\label{lem_Volterra_oper}
Let $Q_{12}, Q_{21}\in L^1[0,1]$ and let $R(\cdot,\cdot) =
(R_{jk}(\cdot,\cdot))_{j,k=1}^2$ be a solution of the system of
integral equations \eqref{12op}--\eqref{13op}. Then
$R(\cdot,\cdot)\in X_{1,0}^{{2\times 2}}(\Omega) \cap
X_{\infty,0}^{2\times 2}(\Omega)$ and  the operator
    \begin{equation}\label{2.39op}
R: \binom{f_1}{f_2}\to \int^x_0 R(x,t)\binom{f_1(t)}{f_2(t)}dt =
 \int^x_0 \begin{pmatrix}
R_{11}(x,t) & R_{12}(x,t) \\
R_{21}(x,t) & R_{22}(x,t)
  \end{pmatrix}
\binom{f_1(t)}{f_2(t)}\,dt
    \end{equation}
is a Volterra operator in $L^p[0,1]\otimes\bC^2$ for each  $p\in
[1,\infty]$.
    \end{lemma}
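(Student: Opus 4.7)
The plan is a short synthesis: combine Proposition~\ref{prop2.5} with Lemma~\ref{lem_Volterra_Vector_oper}(ii).

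First I would invoke Proposition~\ref{prop2.5} to establish kernel regularity. Its ``moreover'' clause asserts that the (unique in $X_{\infty,0}^{2\times 2}(\Omega)$) solution of the system~\eqref{12op}--\eqref{13op} belongs to $X_{1,0}^{2\times 2}(\Omega) \cap X_{\infty,0}^{2\times 2}(\Omega)$. Concretely, one picks a sequence $Q_n = \codiag(Q_{12,n}, Q_{21,n}) \in C^1[0,1] \otimes \bC^{2\times 2}$ approximating $Q$ in $L^1$-norm, uses Proposition~\ref{Prop_smooth_sol} to obtain smooth solutions $R_n \in C^1(\Omega) \otimes \bC^{2\times 2}$ of the corresponding Goursat problem, and applies the estimate~\eqref{main_estimate} to conclude that $R_n \to R$ entrywise in the norm of $X_1(\Omega) \cap X_\infty(\Omega)$. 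Since $C^1(\Omega)$ lies in both $X_{1,0}(\Omega)$ and $X_{\infty,0}(\Omega)$ by the very definition of these subspaces as closures of continuous functions, and since both subspaces are closed, the limit $R$ lies in $X_{1,0}^{2\times 2}(\Omega) \cap X_{\infty,0}^{2\times 2}(\Omega)$. This already gives the first assertion of the lemma.

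Second, with the kernel regularity in hand, I would apply Lemma~\ref{lem_Volterra_Vector_oper}(ii) directly to the kernel $R(\cdot,\cdot)$: its hypothesis is exactly $N(\cdot,\cdot) \in X_{1,0}^{2\times 2}(\Omega) \cap X_{\infty,0}^{2\times 2}(\Omega)$, which was verified in the previous step. That result immediately delivers the conclusion that the integral operator~\eqref{2.39op} is a Volterra operator in $L^p([0,1]; \bC^2)$ for every $p \in [1,\infty]$.

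There is essentially no genuine obstacle here: the analytic heavy lifting---controlling $\|R - R_n\|_{X_1} + \|R - R_n\|_{X_\infty}$ by $\|Q - Q_n\|_{L^1}$ via the Gronwall argument of Proposition~\ref{prop_estimate_for_difference} and its $X_1$-companion inside the proof of Proposition~\ref{prop2.5}---has already been carried out. The only mild care needed is to observe that any ``solution'' $R$ of~\eqref{12op}--\eqref{13op} hypothesized in the lemma must coincide with the one produced in Proposition~\ref{prop2.5}: this is immediate from the uniqueness asserted there in $X_{\infty,0}^{2\times 2}(\Omega)$, so that the smooth-approximation description transfers to the $R$ of the hypothesis, and the chain of implications above closes.
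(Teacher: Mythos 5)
Your proposal is correct and follows essentially the same route as the paper's proof: approximate $Q$ by smooth $Q_n$, use the estimates~\eqref{main_estimate} of Proposition~\ref{prop2.5} to place $R$ in $X_{1,0}^{2\times 2}(\Omega)\cap X_{\infty,0}^{2\times 2}(\Omega)$, and then invoke Lemma~\ref{lem_Volterra_Vector_oper}(ii). Your closing remark identifying the hypothesized solution with the constructed one via the uniqueness clause of Proposition~\ref{prop2.5} is a small but welcome clarification that the paper leaves implicit.
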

\begin{proof}
Let  $Q_n = \codiag(Q_{12,n}, Q_{21,n}) \in C^1[0,1]\otimes
\bC^{2\times 2}$ be any sequence approaching $Q$ in
$L^1[0,1]$-norm and let $R_{n}= (R_{jk,n})_{j,k=1}^2 \in
C^1(\Omega)\otimes \bC^{2\times 2}$, $n\in \bN,$ be the
corresponding system of solutions of the
problem~\eqref{4op}--\eqref{6op} with $Q_n$ instead of $Q$.

Since $R_{n}= (R_{jk,n})_{j,k=1}^2$ is a smooth kernel and
$\lim_{n\to\infty}\|Q - Q_n\|_{L_1[0,1]\otimes \bC^{2\times 2}}
=0,$ it follows from Proposition~\ref{prop2.5} (see estimates
\eqref{main_estimate}) that  $R(\cdot,\cdot)\in
X_{1,0}^{{2\times 2}}(\Omega) \cap X_{\infty,0}^{2\times
2}(\Omega)$. Therefore by Lemma~\ref{lem_Volterra_Vector_oper},
$R$  is a Volterra operator in $L^p[0,1]\otimes \bC^{2},$ $p\in
[1,\infty]$. This completes the proof.
    \end{proof}

\begin{proof}[Proof of Theorem 1]
Let  $P^{\pm} = \diag(P^{\pm}_1, P^{\pm}_2)$ be a diagonal
matrix function with entries $P^{\pm}_j\in L^1[0,1],$
$j\in\{1,2\}$. Define the convolution operator
  \begin{equation}
P^{\pm}:\  f\to   \int^x_0 P^{\pm}(x-t)f(t)dt,\quad  f = \col(f_1,f_2) \in
L^1[0,1]\otimes \bC^{2}.
  \end{equation}
Let $R(x,t)=(R_{jk}(x,t))_{j,k=1}^2$   be the solution of the
system of integral equations~\eqref{12op}--\eqref{13op}.
Starting from the operator $I+R$ and following the reasoning
of~\cite[Theorem 1.2]{Mal99}  we define the operator $K^{\pm}$
by the equality
  \begin{equation}
I + K^{\pm} =(I + R)(I + P^{\pm}).
 \end{equation}
The latter means  that the kernel $K^{\pm}(\cdot,\cdot)$ of
$K^{\pm}$ is given by
   \begin{equation}\label{2.51op}
K^{\pm}(x,t) = R(x,t) + P^{\pm}(x-t) + \int^x_t R(x,s) P^{\pm}(s-t)ds.
   \end{equation}
Let us show that $K^{\pm}(\cdot,\cdot)$ is the kernel of the
transformation operator, i.e.
representation~\eqref{eq:e=(I+K)e0}--\eqref{eq:e=e0} holds.
First,  we  choose $P_1^{\pm}(\cdot)$ so that
$K^{\pm}(\cdot,\cdot)$ will satisfy condition~\eqref{3op}, i.e.
$$
a_1 K^{\pm}_{j1}(x,0) + a_2 K^{\pm}_{j2}(x,0) =0, \quad j\in \{1,2\}.
$$
Inserting representation~\eqref{2.51op} for
$K^{\pm}(\cdot,\cdot)$ in these relations leads to the following
system of Volterra type integral equations
   \begin{equation}\label{2.52op}
\begin{cases}
a_1 P_1^{\pm}(x) + \int^x_0[a_1 R_{11}(x,t)P^{\pm}_1(t)  \pm a_2 R_{12}(x,t)
P^{\pm}_2(t)]dt=\mp a_2 R_{12}(x,0) =: g_1^{\pm}(x),   \\
a_2 P^{\pm}_2(x) + \int^x_0[a_1 R_{21}(x,t)P^{\pm}_1(t) \pm a_2
R_{22}(x,t)P^{\pm}_2(t)]dt = - a_1 R_{21}(x,0) =: g_2^{\pm}(x).
\end{cases}
  \end{equation}
Here the relations~\eqref{11op}  have been taken  into account.
It follows from equation~\eqref{13op} that the functions
$R_{jk}(x,0)$, $j,k\in \{1,2\},$ are well defined. Moreover, the
estimate~\eqref{2.39op_New}  ensures  that $g_{j}(\cdot) \in
L^1[0,1],$ $j\in \{1,2\}.$

On the other hand, by Lemma~\ref{lem_Volterra_oper} the operator
$R$ of the form~\eqref{2.39op} is a Volterra operator in
$L^1[0,1]$. Therefore  system~\eqref{2.52op} is the system of
Volterra equations in $L^1[0,1]\otimes \bC^2$ with respect to
$\col\{a_1 P_1^{\pm}(\cdot), a_2 P_2^{\pm}(\cdot)\},$  hence has
the unique solution $\col\{a_1 P_1^{\pm}(\cdot), a_2
P_2^{\pm}(\cdot)\}\in L^1[0,1]\otimes \bC^2.$

Further, choose a sequence  $Q_n = \codiag(Q_{12,n}, Q_{21,n})
\in C^1[0,1]\otimes \bC^{2\times 2}$ approaching $Q$ in
$L^1[0,1]$-norm.  Then according to Proposition
\ref{Prop_smooth_sol}  there exists the corresponding sequence
of matrix solutions $R_{n} = (R_{jk,n})_{j,k=1}^2 \in
C^1(\Omega)\otimes \bC^{2\times 2}$  of the problem
\eqref{4op}--\eqref{6op} with $Q_n$ instead of $Q$.  Moreover,
by Proposition \ref{prop2.5}, the estimate~\eqref{main_estimate}
holds, hence  $R_{n}$ approaches $R$ in $X_1$ and $X_\infty$
norms.  Choose a sequence  $P^{\pm}_n = \diag(P^{\pm}_{1,n},
P^{\pm}_{2,n})$ of  diagonal matrix functions  with entries
$P^{\pm}_{j,n}(\cdot) \in C^1[0,1],$ $j\in\{1,2\},\ n\in \bN,$
and assume that $P^{\pm}_n(\cdot)$ satisfies the following
system of Volterra  integral equations
   \begin{equation}\label{2.52opNew}
  \begin{cases}
a_1 P_{1,n}^{\pm}(x) + \int^x_0[a_1 R_{11,n}(x,t)P^{\pm}_{1,n}(t)  \pm a_2
R_{12,n}(x,t)P^{\pm}_{2,n}(t)]dt = \mp a_2 R_{12,n}(x,0) =: g_{1,n}^{\pm}(x),   \\
a_2 P^{\pm}_{2,n}(x) +  \int^x_0[a_1 R_{21,n}(x,t) P^{\pm}_{1,n}(t) \pm a_2
R_{22,n}(x,t)P^{\pm}_{2,n}(t)]dt = - a_1 R_{21,n}(x,0) =: g_{2,n}^{\pm}(x).
  \end{cases}
  \end{equation}
Next we define the kernels $K^{\pm}_n(\cdot,\cdot)$ by setting
(cf. formula \eqref{2.51op})
   \begin{equation}\label{2.54op}
K^{\pm}_n(x,t) = R_n(x,t) + \Phi^{\pm}_n(x-t) + \int^x_t R_n(x,s) P^{\pm}_n(s-t)ds,
\quad n\in \bN.
   \end{equation}

Clearly, $K^{\pm}_n(\cdot,\cdot)\in C^1(\Omega)$ and  in
accordance with~\cite[Theorem 1.2]{Mal99}, it  is the unique
solution of the boundary value problem~\eqref{1op}--\eqref{3op}.
Note for instance, that condition~\eqref{2op} for the kernel
$K^{\pm}_n(\cdot,\cdot)$ is satisfied since  $R_n(\cdot,\cdot)$
satisfies this condition,  $K^{\pm}_n(x,x) = R_n(x,x) +
P^{\pm}_n(0)$, and the matrix $P^{\pm}_n(0)$ is diagonal.

Further,  by Proposition~\ref{prop_transfor_oper},
$K^{\pm}_n(\cdot,\cdot)$ is the kernel of transformation
operator for equation~\eqref{eq:system}--\eqref{eq:BQ} with
$Q_n$ in place of $Q$, i.e. the solution $e_{\pm,n}(\cdot;\l)$
of this equation satisfying the initial condition
$e_{\pm,n}(0;\l)=\binom{1}{\pm1}$ admits a representation
 \begin{equation}\label{eq:e=(I+K)e0New}
    e_{\pm,n}(x;\l) = (I+K^{\pm}_n)e^0_{\pm}(x;\l)
    = e^0_{\pm}(x;\l) + \int^x_0 K^{\pm}_n(x,t) e^0_{\pm}(t;\l)dt, \quad n\in \bN.
 \end{equation}

Our aim is to pass to the limit in~\eqref{2.54op}  and
\eqref{eq:e=(I+K)e0New} as $n\to\infty.$  It follows from
\eqref{2.39op_New} with $t=0$ and the estimate
\eqref{main_estimate} that
$$
\lim_{n\to\infty} (\|g_{1,n}^{\pm} - g_{1}^{\pm}\|_{L^1[0,1]} + \|g_{2,n}^{\pm} -
g_{2}^{\pm}\|_{L^1[0,1]}) = 0.
$$
Combining this relation with Proposition~\ref{prop2.5}  we
obtain from~\eqref{2.52op} and~\eqref{2.52opNew} that
  \begin{equation}\label{2.53op}
\binom{a_1 P_{1,n}^{\pm}(\cdot)}{a_2 P_{2,n}^{\pm}(\cdot)} = (I +
R_n)^{-1}\binom{g_{1,n}^{\pm}}{g_{2,n}^{\pm}} \to (I +
R)^{-1}\binom{g_{1}^{\pm}}{g_{2}^{\pm}} = \binom{a_1 P_{1}^{\pm}(\cdot)}{a_2
P_{2}^{\pm}(\cdot)}.
   \end{equation}
Further, setting $\widehat P^{\pm}_n:= P_n - P$ we derive
from~\eqref{2.51op} and~\eqref{2.54op}
   \begin{align}
\int^1_t|\widehat K^{\pm}_n(x,t)|dx = \int^1_t|K^{\pm}_n(x,t)- K^{\pm}(x,t)|dx  \le
\int^1_t|\widehat R_n(x,t)|\,dx +  \int^1_t |\widehat P_n^\pm(x-t)|\,dx  \nonumber \\
+ \int^1_t|P^{\pm}_n(s-t)|ds \int^1_s|\widehat R_n(x,s)|\,dx +
\int^1_t |\widehat P^{\pm}_n(s-t)|ds \int^1_s |R(x,s)|\,dx  \nonumber  \\
\le\|\widehat R_n\|_{X_1(\Omega)} \left(1 + \|P^{\pm}_n\|_{L^1[0,1]}\right) + \|\widehat
P^{\pm}_n\|_{L^1[0,1]} \left(1 + \|R\|_{X_1(\Omega)}\right).\quad \label{2.72}
   \end{align}
On the other hand, by Proposition~\ref{prop2.5},
$\lim_{n\to\infty}\|\widehat R_{n}\|_{X_1^{2\times 2}} =
\lim_{n\to\infty}\|R_{n}-R\|_{X_1^{2\times 2}} = 0$, and due to
\eqref{2.53op} $\lim_{n\to\infty}\|\widehat
P^{\pm}_n\|_{L^1[0,1]} = 0$. Combining these relations with
\eqref{2.72} yields
    \begin{equation}\label{2.73}
\lim_{n\to\infty}\|\widehat  K^{\pm}_n\|_{X_1^{2\times 2}} = \lim_{n\to\infty}\| K^{\pm}
-  K^{\pm}_n\|_{X_1^{2\times 2}} = 0.
    \end{equation}
The latter means that $K^{\pm}\in X_{1,0}^{2\times 2}(\Omega)$.
In just the same way one proves the relation
    \begin{equation}\label{2.74}
\lim_{n\to\infty}\|\widehat  K^{\pm}_n\|_{X_\infty^{2\times 2}} = \lim_{n\to\infty}\|
K^{\pm} -  K^{\pm}_n\|_{X_\infty^{2\times 2}} = 0.
    \end{equation}

Using  relation~\eqref{2.74}  we can pass to the limit as
$n\to\infty$ in formula~\eqref{eq:e=(I+K)e0New} and arrive at
the required formula~\eqref{eq:e=(I+K)e0}.
    \end{proof}
  \begin{remark}
(i)    For Dirac $2\times 2$ system $(B = \diag(-1,1))$ with
continuous $Q$ the triangular transformation operators  have
been constructed in~\cite[Ch.10.3]{LevSar88}
and~\cite[Ch.1.2]{Mar77}. For $Q \in L^1[0,1]\otimes
\bC^{2\times 2}$ it is proved in~\cite{AlbHryMyk05} by an
appropriate generalization of the Marchenko  method.

(ii) Let $J: f\to \int_0^xf(t)dt$ be a Volterra operator on
$L^p[0,1]$.  Note that the similarity of Volterra operators
given by~\eqref{2.39opNew} to the simplest Volterra operators of
the form $B\otimes J$   acting in the  spaces $L^p[0,1]\otimes
\bC^{2}$ has been investigated in~\cite{Mal99, Rom08}.
     \end{remark}

\section{Asymptotic behavior of solutions} \label{sec:AsympSol}
%
%
Let  $K^{\pm}(x,t)=\bigl(K^{\pm}_{jk}(x,t)\bigr)^2_{j,k=1}$ be
the kernel of a triangular  transformation operator constructed
in  Theorem~\ref{th:Trans} (see formulas
\eqref{eq:e=(I+K)e0}--\eqref{eq:e=e0}). To state the next result
we put
\begin{equation} \label{eq:Rjk}
    R_{jk}^{\pm} := 2^{-1}({K_{jk}^{+} + K_{jk}^{-}}), \quad j,k \in \{1,2\},
\end{equation}
and let
\begin{equation}\label{3.2}
    \Phi(\cdot, \l) =
    \begin{pmatrix} \varphi_{11}(\cdot, \l)  & \varphi_{12}(\cdot, \l)\\
    \varphi_{21}(\cdot,\l) & \varphi_{22}(\cdot,\l)
    \end{pmatrix} =: \begin{pmatrix} \Phi_1(\cdot, \l) & \Phi_2(\cdot, \l)
    \end{pmatrix}, \qquad \Phi(0, \l) = I_2,
\end{equation}
be a fundamental matrix solution of the
system~\eqref{eq:systemIntro}. Here $\Phi_k(\cdot, \l)$ is the
$k$th column of  $\Phi(\cdot, \l)$.

Our investigating of the perturbation determinant relies on the
following result.
%
%
\begin{proposition} \label{prop:phi.jk=e+int}
Let $Q \in L^1[0,1] \otimes \bC^{2 \times 2}$ and let
$\varphi_{jk}(\cdot, \l)$, $j,k\in \{1,2\},$ be the entries of
the  fundamental matrix solution~\eqref{3.2}.
 Then the functions $\varphi_{jk}(\cdot,
\l)$ admit the following representations
\begin{align}
    \varphi_{11}(x,\l) &= e^{i b_1 \l x} + \int_0^x R_{11}^{+}(x,t)e^{ib_1\l t}dt
        + \int_0^x R_{12}^{-}(x,t)e^{ib_2\l t}dt, \label{eq:phi11} \\
    \varphi_{12}(x,\l) &= \, \ \ \ \ \ \ \ \ \ \int_0^x R_{11}^{-}(x,t)e^{ib_1\l t}dt
        + \int_0^x R_{12}^{+}(x,t)e^{ib_2\l t}dt, \label{eq:phi12} \\
    \varphi_{21}(x,\l) &= \, \ \ \ \ \ \ \ \ \ \int_0^x R_{21}^{+}(x,t)e^{ib_1\l t}dt
        + \int_0^x R_{22}^{-}(x,t)e^{ib_2\l t}dt, \label{eq:phi21} \\
    \varphi_{22}(x,\l) &= e^{i b_2 \l x} + \int_0^x R_{21}^{-}(x,t)e^{ib_1\l t}dt
        + \int_0^x R_{22}^{+}(x,t)e^{ib_2\l t}dt, \label{eq:phi22}
\end{align}
where $R_{jk}^{\pm}\in  X_{1,0}(\Omega) \cap
X_{\infty,0}(\Omega),$ \ $j,k \in \{1,2\}$.
   \end{proposition}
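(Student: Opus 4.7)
The plan is to derive the four representations by expressing the columns $\Phi_1(\cdot,\lambda)$ and $\Phi_2(\cdot,\lambda)$ of the fundamental matrix as simple linear combinations of the solutions $e_+(\cdot,\lambda)$ and $e_-(\cdot,\lambda)$ already analyzed in Theorem~\ref{th:Trans}, and then substitute the triangular representations \eqref{eq:e=(I+K)e0}--\eqref{eq:e=e0}.

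First I would observe that, since $e_\pm(0,\lambda)=\binom{1}{\pm 1}$, the identities
\begin{equation*}
\binom{1}{0}=\tfrac12\bigl(e_+(0,\lambda)+e_-(0,\lambda)\bigr),\qquad
\binom{0}{1}=\tfrac12\bigl(e_+(0,\lambda)-e_-(0,\lambda)\bigr)
\end{equation*}
hold, and by linearity of the system~\eqref{eq:systemIntro} and uniqueness of the Cauchy problem we get
\begin{equation*}
\Phi_1(x,\lambda)=\tfrac12\bigl(e_+(x,\lambda)+e_-(x,\lambda)\bigr),\qquad
\Phi_2(x,\lambda)=\tfrac12\bigl(e_+(x,\lambda)-e_-(x,\lambda)\bigr),
\end{equation*}
which matches $\Phi(0,\lambda)=I_2$.

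Next I would expand using Theorem~\ref{th:Trans}. The component formulas read
\begin{equation*}
e_{\pm,1}(x,\lambda)=e^{ib_1\lambda x}+\int_0^x\!\bigl[K_{11}^\pm(x,t)e^{ib_1\lambda t}\pm K_{12}^\pm(x,t)e^{ib_2\lambda t}\bigr]dt,
\end{equation*}
\begin{equation*}
e_{\pm,2}(x,\lambda)=\pm e^{ib_2\lambda x}+\int_0^x\!\bigl[K_{21}^\pm(x,t)e^{ib_1\lambda t}\pm K_{22}^\pm(x,t)e^{ib_2\lambda t}\bigr]dt.
\end{equation*}
Taking the half-sum and half-difference of these, the exponential terms combine correctly to give $e^{ib_1\lambda x}$, $0$, $0$, $e^{ib_2\lambda x}$ in the four entries, and the integrand coefficients collapse to either $(K_{jk}^++K_{jk}^-)/2$ or $(K_{jk}^+-K_{jk}^-)/2$ according to which linear combination is taken. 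Identifying these with $R_{jk}^\pm$ (so that $R_{jk}^+=(K_{jk}^++K_{jk}^-)/2$ appears on the ``diagonal'' $b_j=b_k$ terms, and $R_{jk}^-=(K_{jk}^+-K_{jk}^-)/2$ on the ``off-diagonal'' $b_j\ne b_k$ terms, with the opposite pairing in $\varphi_{12}$ and $\varphi_{21}$) yields exactly \eqref{eq:phi11}--\eqref{eq:phi22}.

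Finally, the membership $R_{jk}^\pm\in X_{1,0}(\Omega)\cap X_{\infty,0}(\Omega)$ is immediate from Theorem~\ref{th:Trans}, since $K_{jk}^{\pm}\in X_{1,0}(\Omega)\cap X_{\infty,0}(\Omega)$ and these are linear subspaces. There is no real obstacle here; the whole argument is a bookkeeping matching of linear combinations, and the only point that deserves a moment's care is tracking the signs correctly so that the pairing $R^+$ with ``same-sign'' and $R^-$ with ``opposite-sign'' exponentials comes out as stated.
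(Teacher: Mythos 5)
Your proposal is correct and follows essentially the same route as the paper: both identify $\Phi_1=\tfrac12(e_++e_-)$ and $\Phi_2=\tfrac12(e_+-e_-)$ via uniqueness of the Cauchy problem and then substitute the transformation-operator representations from Theorem~\ref{th:Trans}, with the kernels $R_{jk}^{\pm}=\tfrac12(K_{jk}^{+}\pm K_{jk}^{-})$ inheriting membership in $X_{1,0}(\Omega)\cap X_{\infty,0}(\Omega)$ by linearity. Your explicit tracking of the $\tfrac12$ factors and of which combination ($+$ or $-$) attaches to each exponential is exactly the bookkeeping the paper leaves to the reader.
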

  \begin{proof}
Comparing initial conditions  and applying the Cauchy uniqueness
theorem one easily gets $\Phi_1(\cdot,\l) = \begin{pmatrix}
\varphi_{11}(\cdot, \l) \\ \varphi_{21}(\cdot,\l)
    \end{pmatrix}  =   e_{+}(\cdot;\l) + e_{-}(\cdot;\l).$  Inserting  in place of  $e_{+}(\cdot;\l)$ and
$e_{-}(\cdot;\l)$ their expressions from~\eqref{eq:e=(I+K)e0}
one arrives at \eqref{eq:phi11}  and~\eqref{eq:phi21}. Relations
\eqref{eq:phi12}  and~\eqref{eq:phi22} are proved similarly.
  \end{proof}
%
%
\begin{lemma} \label{lem:Rxt.eiblt->0}
Let $N(\cdot,\cdot) \in X_{\infty,0}(\Omega)$, $b \in \bR
\setminus \{0\}$ and $h > 0$. Then the following asymptotic
holds  uniformly in $x \in [0,1]$
\begin{equation} \label{eq:int0x.Rxt.e.iblt->0}
    \int_{0}^x N(x,t) e^{i b \l t} dt \to 0
    \quad\text{as}\quad \l \to \infty, \quad |\Im \l| \leqslant h.
\end{equation}
\end{lemma}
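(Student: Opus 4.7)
The plan is a Riemann–Lebesgue type argument executed via an $\varepsilon/2$ split: approximate $N$ by a smooth kernel, handle the smooth part by integration by parts (which gives decay in $\lambda$), and control the approximation error uniformly in $\lambda$ across the strip $|\Im \lambda|\le h$.

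Fix $\varepsilon > 0$. Since $N(\cdot,\cdot)\in X_{\infty,0}(\Omega)$ and $C^1(\Omega)$ is dense in $X_{\infty,0}(\Omega)$ (as stated in the paper right after the definition of the spaces), choose $N_\varepsilon \in C^1(\Omega)$ with $\|N-N_\varepsilon\|_{X_\infty} < \varepsilon$. Split
\begin{equation*}
\int_0^x N(x,t)e^{ib\lambda t}\,dt = \int_0^x (N-N_\varepsilon)(x,t)\,e^{ib\lambda t}\,dt + \int_0^x N_\varepsilon(x,t)\,e^{ib\lambda t}\,dt.
\end{equation*}

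For the first term, use the strip bound $|e^{ib\lambda t}|\le e^{|b|ht}\le e^{|b|h}$ valid for $t\in[0,1]$ and $|\Im \lambda|\le h$, together with the definition~\eqref{eq:B1.norm.def} of the $X_\infty$-norm:
\begin{equation*}
\left|\int_0^x (N-N_\varepsilon)(x,t)\,e^{ib\lambda t}\,dt\right| \le e^{|b|h}\int_0^x|N(x,t)-N_\varepsilon(x,t)|\,dt \le e^{|b|h}\|N-N_\varepsilon\|_{X_\infty} < e^{|b|h}\varepsilon,
\end{equation*}
which is uniform in $x\in[0,1]$ and $\lambda$ in the strip.

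For the second term, integrate by parts in $t$ (legal since $N_\varepsilon\in C^1(\Omega)$, so both $N_\varepsilon$ and $\partial_t N_\varepsilon$ are bounded on $\Omega$ by some $M_\varepsilon$):
\begin{equation*}
\int_0^x N_\varepsilon(x,t)\,e^{ib\lambda t}\,dt = \frac{N_\varepsilon(x,x)e^{ib\lambda x} - N_\varepsilon(x,0)}{ib\lambda} - \frac{1}{ib\lambda}\int_0^x \partial_t N_\varepsilon(x,t)\,e^{ib\lambda t}\,dt.
\end{equation*}
Each of the three pieces on the right is bounded in modulus by $2M_\varepsilon e^{|b|h}/(|b|\cdot|\lambda|)$, which tends to $0$ uniformly in $x\in[0,1]$ as $|\lambda|\to\infty$ with $|\Im \lambda|\le h$. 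Hence for all $\lambda$ in the strip with $|\lambda|$ sufficiently large (depending on $\varepsilon$), the whole expression is bounded by $(e^{|b|h}+1)\varepsilon$ uniformly in $x$. Since $\varepsilon>0$ was arbitrary, this yields~\eqref{eq:int0x.Rxt.e.iblt->0}.

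There is no real obstacle here; the only thing to note is that the hypothesis $N\in X_{\infty,0}$ (rather than merely $N\in X_\infty$) is exactly what is needed to approximate by smooth kernels and thereby run the integration-by-parts step. Non-separability of the ambient space $X_\infty$ is precisely why the subscript $0$ matters in the statement.
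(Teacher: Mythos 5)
Your proof is correct and follows essentially the same route as the paper: approximate $N$ by a $C^1$ kernel in the $X_\infty$-norm (which is exactly what membership in $X_{\infty,0}$ provides), bound the error term uniformly on the strip via $|e^{ib\l t}|\le e^{|b|h}$, and integrate by parts on the smooth part to get $O(1/|\l|)$ decay. No gaps.
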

%
%
\begin{proof}
By the definition of the space  $X_{\infty,0}(\Omega)$, the
inclusion $N \in X_{\infty,0}(\Omega)$ ensures that for any
$\eps > 0$ there exists $N_{\eps} \in C^1(\Omega)$ such that
\begin{equation}
    \| N - N_{\eps} \|_{X_{\infty}} = \esssup_{x \in [0,1]}
    \int_0^x \left| N(x,t) - N_{\eps}(x,t) \right| dt < \eps.
\end{equation}
In particular, we get the following uniform estimate
\begin{equation} \label{eq:int.R-Re}
    \left| \int_0^x \left( N(x,t) - N_{\eps}(x,t) \right) e^{i b \l t} dt \right|
    \leqslant \eps e^{|b|h}, \quad x \in [0,1], \quad |\Im \l| \leqslant h.
\end{equation}
Since $N_{\eps} \in C^1(\Omega)$, integrating by parts  the
integral $\int_0^x N_{\eps}(x,t) e^{i b \l t} dt$ we obtain  the
following estimate uniformly in $x \in [0,1]$ with some $C > 0$
\begin{equation} \label{eq:wtKjk.ebk<C/l}
    \left|\int_0^x N_{\eps}(x,t) e^{i b \l t} dt \right| < \frac{C}{|\l|},
    \qquad \l \ne 0, \quad |\Im \l| \leqslant h.
\end{equation}
The desired formula~\eqref{eq:int0x.Rxt.e.iblt->0} now directly
follows from estimates~\eqref{eq:int.R-Re}
and~\eqref{eq:wtKjk.ebk<C/l}.
\end{proof}
\begin{remark} We demonstrate that the assumption  $N(\cdot,\cdot)\in
X_{\infty,0}(\Omega)$ is important for the validity of the
statement of Lemma~\ref{lem:Rxt.eiblt->0}. More precisely, we
show that for certain $N(\cdot,\cdot)\in
X_{\infty}(\Omega)\setminus X_{\infty,0}(\Omega)$ the pointwice
convergence in~\eqref{eq:int0x.Rxt.e.iblt->0}  holds but is not
uniform in $x\in [0,1]$.

Let  $N(x,t)=\frac{1}{x}k(\frac{t}{x})$  where $k(\cdot)$
satisfies the conditions of Proposition~\ref{prop2.3}. Then
   \begin{equation}\label{3.11}
\int^x_0 N(x,t)e^{it\lambda}dt = \frac{1}{x}\int^x_0
k\left(\frac{t}{x}\right)e^{it\lambda}dt=\int^1_0 k(s)e^{i\lambda s x}ds\to 0\quad
\text{as}\ \l \to\infty,\  \l \in \Pi_h,
  \end{equation}
for each $x\in(0,1]$ and $\lambda\in\Pi_h$. However this
convergence   is not uniform in $x\in[0,1]$.  Indeed,  since
$k(\cdot)\not \equiv 0$, its Fourier transform $\widehat
k(\cdot)$  does  not vanish identically, i.e. there exists $a\in
\Bbb R$ such that $\widehat k(a)\not =0$. Therefore  for $\l\in
\Bbb R$ big enough and  $x = a/\l\in (0,1)$ the right hand side
of~\eqref{3.11} is $\widehat k(a)\not =0$.
\end{remark}
%
%
%
In the sequel we need the following result on the asymptotic
behavior of solutions of the system~\eqref{eq:system} in the
strip
$$
\Pi_h := \{ \l \in \bC : |\Im\,\l| \leqslant h \}.
$$
%
%
      \begin{proposition} \label{prop:phi.jk.asymp}
Let $Q \in L^1[0,1] \otimes \bC^{2 \times 2}$. Then for any $h >
0$ the following asymptotic relations take place \emph{uniformly
in} $x \in [0,1]$
\begin{equation} \label{eq:phi.jkx}
    \varphi_{jk}(x, \l) = \delta_{jk} e^{i b_k \l x} + o(1)
    \quad\text{as}\quad \l \to \infty, \quad \l \in \Pi_h,
    \quad j,k \in \{1,2\}.
\end{equation}
   \end{proposition}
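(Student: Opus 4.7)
The plan is to derive the asymptotic formulas~\eqref{eq:phi.jkx} as an immediate consequence of the integral representations \eqref{eq:phi11}--\eqref{eq:phi22} obtained in Proposition~\ref{prop:phi.jk=e+int}, combined with the Riemann--Lebesgue type decay established in Lemma~\ref{lem:Rxt.eiblt->0}. Indeed, each $\varphi_{jk}(x,\l)$ has been written as $\delta_{jk} e^{i b_k \l x}$ plus two integrals of the form $\int_0^x R(x,t) e^{i b_m \l t}\,dt$ with $R\in X_{\infty,0}(\Omega)$ and $b_m \in \{b_1, b_2\} \subset \bR\setminus\{0\}$, so the structure is already perfectly suited for the lemma.

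First, I would fix $h>0$ and recall from Theorem~\ref{th:Trans} (via \eqref{eq:Rjk}) that the six kernels $R_{jk}^{\pm}$ appearing in \eqref{eq:phi11}--\eqref{eq:phi22} belong to $X_{1,0}(\Omega)\cap X_{\infty,0}(\Omega)$; in particular each of them lies in $X_{\infty,0}(\Omega)$, which is the hypothesis required by Lemma~\ref{lem:Rxt.eiblt->0}. Then I would apply that lemma six times: for each pair $(j,k)$ and each of the two integrals in the corresponding representation, the lemma yields
\[
\int_0^x R_{jk}^{\pm}(x,t)\, e^{i b_m \l t}\,dt \;=\; o(1)
\quad\text{as}\quad \l\to\infty,\ \l\in\Pi_h,
\]
uniformly in $x\in[0,1]$, with $m\in\{1,2\}$ the appropriate index.

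Summing the two $o(1)$ contributions in each of \eqref{eq:phi11}--\eqref{eq:phi22} and keeping the non-integral term $e^{ib_k \l x}$ only when $j=k$ gives exactly \eqref{eq:phi.jkx}, with the uniformity in $x\in[0,1]$ inherited from that of Lemma~\ref{lem:Rxt.eiblt->0}.

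There is essentially no obstacle here: the heavy lifting has already been done, first in Theorem~\ref{th:Trans} where the transformation operator kernels were shown to belong to the right function class $X_{1,0}^{2\times 2}\cap X_{\infty,0}^{2\times 2}$ (which was the delicate point, requiring the approximation argument and the estimates of Proposition~\ref{prop2.5}), and then in Lemma~\ref{lem:Rxt.eiblt->0} where the uniform Riemann--Lebesgue decay on the strip $\Pi_h$ was established by $C^1$-approximation followed by integration by parts. The present proposition is therefore a short and routine consequence of these two ingredients.
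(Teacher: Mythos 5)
Your proof is correct and follows exactly the paper's own (one-line) argument: combine the representations \eqref{eq:phi11}--\eqref{eq:phi22} from Proposition~\ref{prop:phi.jk=e+int} with the uniform Riemann--Lebesgue decay of Lemma~\ref{lem:Rxt.eiblt->0}. The only (harmless) slip is the count: there are eight kernels $R_{jk}^{\pm}$ and eight integrals to which the lemma is applied, not six.
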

  \begin{proof}
The proof  immediately follows  by combining
Proposition~\ref{prop:phi.jk=e+int} with
Lemma~\ref{lem:Rxt.eiblt->0}.
  \end{proof}
Applying the same approximation procedure  as has just been used
in the proof of Lemma \ref{lem:Rxt.eiblt->0} to the space
$L^1[0,1]$ we obtain  the following simple statement useful  in
the sequel.
%
%
\begin{lemma} \label{lem:RimLeb}
Let $g \in L^1[0,1]$ and $c \in \bC \setminus \{0\}$. Then for
any $\eps>0$ there exists $M = M_{\eps}> 0$ such that
\begin{equation}
    \left|\int_0^1 g(t) e^{c \l t} dt \right| < \eps (e^{\Re(c \l)} + 1),
    \quad |\l| > M.
\end{equation}
\end{lemma}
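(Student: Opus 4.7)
The statement is a weighted Riemann--Lebesgue estimate: the integral is small relative to the natural upper bound $e^{\Re(c\l)} + 1$ for $|e^{c\l t}|$ on $[0,1]$. The plan is to imitate the classical density/approximation argument used in the proof of Lemma~\ref{lem:Rxt.eiblt->0}, replacing the role of $X_{\infty,0}(\Omega)$ by the density of $C^1[0,1]$ in $L^1[0,1]$.

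First I would record the elementary pointwise bound
\begin{equation*}
    |e^{c\l t}| = e^{t \Re(c\l)} \le \max\bigl(1, e^{\Re(c\l)}\bigr) \le 1 + e^{\Re(c\l)}, \qquad t\in[0,1],
\end{equation*}
obtained by splitting into the cases $\Re(c\l) \ge 0$ and $\Re(c\l) < 0$. This is the key observation that makes the factor $e^{\Re(c\l)} + 1$ the natural weight.

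Next, given $\eps > 0$, choose $g_\eps \in C^1[0,1]$ with $\|g - g_\eps\|_{L^1[0,1]} < \eps/2$. Using the pointwise bound above,
\begin{equation*}
    \left|\int_0^1 (g(t) - g_\eps(t))\, e^{c\l t}\, dt \right|
    \le \|g - g_\eps\|_{L^1}\bigl(1 + e^{\Re(c\l)}\bigr)
    < \frac{\eps}{2}\bigl(1 + e^{\Re(c\l)}\bigr).
\end{equation*}
For the smooth remainder I would integrate by parts once:
\begin{equation*}
    \int_0^1 g_\eps(t)\, e^{c\l t}\, dt
    = \frac{g_\eps(1)e^{c\l} - g_\eps(0)}{c\l} - \frac{1}{c\l}\int_0^1 g_\eps'(t)\, e^{c\l t}\, dt,
\end{equation*}
whose absolute value is at most $C_\eps\,|c\l|^{-1}\bigl(1 + e^{\Re(c\l)}\bigr)$, where $C_\eps := |g_\eps(0)| + |g_\eps(1)| + \|g_\eps'\|_{L^1[0,1]}$. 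Choosing $M_\eps > 0$ so that $C_\eps/(|c|M_\eps) < \eps/2$ completes the proof by the triangle inequality for $|\l| > M_\eps$.

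There is no real obstacle here; the argument is the standard smoothing-plus-integration-by-parts trick. The only point that requires a moment of care is making sure that both the low-frequency tail (controlled by the $L^1$ approximation) and the oscillatory part (controlled by IBP and the factor $1/|c\l|$) are bounded by the \emph{same} weight $e^{\Re(c\l)} + 1$, for which the initial pointwise bound is exactly what is needed.
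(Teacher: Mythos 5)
Your proof is correct and follows exactly the route the paper intends: the paper gives no separate proof of this lemma, stating only that one should apply to $L^1[0,1]$ the same approximation-plus-integration-by-parts procedure used for Lemma~\ref{lem:Rxt.eiblt->0}, which is precisely what you do. Your explicit pointwise bound $|e^{c\l t}|\le 1+e^{\Re(c\l)}$ for $t\in[0,1]$ is the right observation to make the weighted estimate work, and the rest is routine.
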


Further, consider the  adjoint system
\begin{equation}\label{3.15}
    -iB^{-1} y' + Q^*(x) y = \l y, \quad x \in [0,1],
\end{equation}
and introduce its  fundamental matrix solution
\begin{equation}\label{3.16}
    \Psi(\cdot, \l) =
    \begin{pmatrix} \psi_{11}(\cdot, \l)  & \psi_{12}(\cdot, \l)\\
    \psi_{21}(\cdot,\l) & \psi_{22}(\cdot,\l)
    \end{pmatrix} =: \begin{pmatrix} \Psi_1(\cdot, \l) & \Psi_2(\cdot, \l) \end{pmatrix},
    \qquad \Psi(0,\l)=I_2.
\end{equation}
Here  $\Psi_k(\cdot, \l)$ is the $k$th column of  $\Psi(\cdot,
\l)$.  Clearly, Proposition~\ref{prop:phi.jk.asymp} holds  for
the matrix solution $\Psi(\cdot, \l)$ as well.
Hence~\eqref{eq:phi.jkx} and similar relations for
$\psi_{jk}(\cdot, \l)$ imply the following result.
%
%
\begin{corollary} \label{cor:Phi.Psi}
Let $h>0$. Then for $\l \to \infty$, $\l \in \Pi_h$, the
following asymptotic relations hold
\begin{align}\label{3.17}
    \left(\Phi_{j}(\cdot,\l), \Psi_{k}(\cdot,\overline{\l})\right) & = \delta_{jk} + o(1),
    \quad j,k \in \{1,2\}, \\
    \left(\Phi_{1}(\cdot,\l), \Phi_{2}(\cdot,\l)\right) & = o(1). \label{3.18}
\end{align}
Moreover, there exist constants  $M > 0$ and $C_1, C_2 > 0$,
such that
\begin{equation} \label{eq:Phij.Phij}
  0 <   C_1 < \left|\left(\Phi_{j}(\cdot,\l), \Phi_{j}(\cdot,\l)\right)\right| < C_2,
    \quad  \l \in \Pi_h, \quad |\l|
> M,  \quad j \in \{1,2\}.
  \end{equation}
\end{corollary}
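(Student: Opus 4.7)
The plan is to substitute the asymptotic formulas from Proposition~\ref{prop:phi.jk.asymp} (which, as noted in the text, applies verbatim to the adjoint fundamental matrix $\Psi$) into each of the three scalar products and retain only the leading exponential contributions. The one analytic observation underlying every step is that, for $\l \in \Pi_h$, the scalar factors $e^{i b_k \l x}$ are uniformly bounded in $x \in [0,1]$ by $\max_k e^{|b_k| h}$, so a product of such an exponential with an $o(1)$ term remains $o(1)$ uniformly in $x$.

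To establish~\eqref{3.17} I would write
\[
    (\Phi_j(\cdot,\l), \Psi_k(\cdot,\overline{\l})) = \int_0^1 \sum_{i=1}^{2} \varphi_{ij}(x,\l)\,\overline{\psi_{ik}(x,\overline{\l})}\,dx
\]
and use $\overline{\psi_{ik}(x,\overline{\l})} = \delta_{ik} e^{-i b_k \l x} + o(1)$ (here we exploit $b_k \in \bR$) together with $\varphi_{ij}(x,\l) = \delta_{ij} e^{i b_j \l x} + o(1)$ from Proposition~\ref{prop:phi.jk.asymp}. Expanding the product and summing over $i$ collapses the integrand to $\delta_{jk}\, e^{i(b_j - b_k)\l x} + o(1)$ uniformly in $x$, so integration over $[0,1]$ returns $\delta_{jk} + o(1)$. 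Formula~\eqref{3.18} follows from exactly the same substitution with $\Psi_k$ replaced by $\Phi_k$, noting now that $\delta_{i1}\delta_{i2} \equiv 0$ annihilates the leading term entirely, leaving only an $o(1)$ remainder.

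For the two-sided bound~\eqref{eq:Phij.Phij}, the analogous expansion gives
\[
    (\Phi_j(\cdot,\l), \Phi_j(\cdot,\l)) = \int_0^1 e^{-2 b_j (\Im \l) x}\,dx + o(1).
\]
The leading integral equals $1$ when $\Im \l = 0$ and is a continuous, strictly positive function of $\Im \l$ on the compact interval $[-h, h]$; hence it is bounded above and below by positive constants $c_1(h), c_2(h)$. Choosing $M$ so large that the $o(1)$ remainder has absolute value below $c_1(h)/2$ for $|\l| > M$ delivers the claimed constants $C_1, C_2$.

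I do not anticipate a genuine obstacle here: Proposition~\ref{prop:phi.jk.asymp} has already absorbed the nontrivial analysis based on the transformation operators, and what remains is pure bookkeeping on leading-order oscillatory terms together with continuity of one explicit integral in the parameter $\Im \l$ on a compact interval. The only minor point meriting care is preserving uniformity of the $o(1)$ after multiplication by the bounded exponential factors, but this is automatic on the strip $\Pi_h$.
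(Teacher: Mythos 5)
Your argument is correct and follows essentially the same route as the paper: substitute the uniform asymptotics of Proposition~\ref{prop:phi.jk.asymp} (and their analogue for $\Psi$), integrate the surviving exponential, and bound the resulting function $\frac{e^{-2b_j\Im\l}-1}{-2b_j\Im\l}$ above and below on the compact range $|\Im\l|\le h$ before absorbing the $o(1)$ remainder. No substantive difference from the paper's proof.
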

%
%
\begin{proof}
First let us evaluate $\left(\Phi_{1}(\cdot,\l),
\Phi_{1}(\cdot,\l)\right)$.  Setting $f(x) := \frac{e^x-1}{x}$,
$x\in \mathbb R,$ and noting that  $e^{i a \l x}$, $a\in \mathbb
R$, is bounded for $(x,\l) \in [0,1] \times \Pi_h$,  one easily
deduces from the \emph{uniform} asymptotic
relations~\eqref{eq:phi.jkx}
\begin{align} \label{eq:Phi1.Phi1}
 \|\Phi_{1}(\cdot,\l)\|^2 =   \left(\Phi_{1}(\cdot,\l), \Phi_{1}(\cdot,\l)\right)
    &= \int_0^1 \left( \varphi_{11}(x, \l) \ol{\varphi_{11}(x, \l)} +
                       \varphi_{12}(x, \l) \ol{\varphi_{12}(x, \l)} \right) dx \nonumber \\
    &= \int_0^1 \left(e^{i b_1 (\l - \ol{\l})x} + o(1)\right)dx
     = \frac{e^{-2 b_1 \Im\l} - 1}{-2 b_1 \Im\l} + o(1) \nonumber \\
    &= f(-2 b_1 \Im\l) + o(1) \quad\text{as}\quad  \l \to \infty, \quad  \l \in \Pi_h.
\end{align}
Clearly, there exists $C_1, C_2 > 0$ such that
\begin{equation} \label{eq:C1<|f|<C2}
    C_1 \leqslant |f(x)| \leqslant C_2, \quad |x| \leqslant 2 |b_1| h.
\end{equation}
Combining~\eqref{eq:Phi1.Phi1} with~\eqref{eq:C1<|f|<C2} one
proves~\eqref{eq:Phij.Phij} for $j=1$. Relation
\eqref{eq:Phi1.Phi1} for $j=2$ as well as relations
\eqref{3.17},~\eqref{3.18} are proved similarly.
\end{proof}
%
%
\section{Regular  boundary conditions} \label{sec:Regular}
%
%
Here we consider $2\times 2$-Dirac type
equation~\eqref{eq:system},
\begin{equation}\label{eq:system}
    -i B^{-1} y'+Q(x)y=\l y, \qquad y={\rm col}(y_1,y_2), \qquad x\in[0,1],
\end{equation}
subject to the following general boundary conditions
\begin{equation}\label{eq:BC}
    U_j(y) := a_{j 1}y_1(0) + a_{j 2}y_2(0) + a_{j 3}y_1(1) + a_{j 4}y_2(1)= 0,
    \quad  j \in \{1,2\}.
\end{equation}
Denote by $L := L(Q, U_1, U_2)$ the operator associated in
$L^2([0,1]; \bC^2)$ with the
BVP~\eqref{eq:system}--\eqref{eq:BC}. It is defined as the
restriction of the maximal operator $L_{\max} =
L_{\max}(Q)$~\eqref{Max_oper_Intro} to the domain
\begin{equation} \label{eq:dom}
   \dom(L) = \dom(L(Q, U_1, U_2)) = \{y \in \dom(L_{\max}) : U_1(y) = U_2(y) = 0\}.
\end{equation}

The eigenvalues of the problem~\eqref{eq:system}--\eqref{eq:BC}
are the roots of the characteristic equation $\Delta(\l) := \det
U(\l)=0$, where
\begin{equation}\label{eq:U}
    U(\l) :=
    \begin{pmatrix}
        U_1(\Phi_1(\cdot,\l)) & U_1(\Phi_2(\cdot,\l)) \\
        U_2(\Phi_1(\cdot,\l)) & U_2(\Phi_2(\cdot,\l))
    \end{pmatrix}
    =: \begin{pmatrix} u_{11}(\l) & u_{12}(\l) \\ u_{21}(\l) & u_{22}(\l) \end{pmatrix}.
\end{equation}
Putting $A_{jk} =
    \begin{pmatrix}
        a_{1j}&a_{1k} \\
        a_{2j}&a_{2k}
    \end{pmatrix}$, and $J_{jk} = \det (A_{jk}), \ j,k\in\{1,\ldots,4\}$,
we  obtain  the following expression for the characteristic
determinant
\begin{equation}\label{eq:Delta}
    \Delta(\l) = J_{12} + J_{34}e^{i(b_1+b_2)\l}
    + J_{32}\varphi_{11}(\l) + J_{13}\varphi_{12}(\l)
    + J_{42}\varphi_{21}(\l) + J_{14}\varphi_{22}(\l),
\end{equation}
where $\varphi_{jk}(\l) := \varphi_{jk}(1,\l)$. If $Q=0$ then
$\varphi_{12}(x,\l) = \varphi_{21}(x,\l) = 0$ and the
characteristic determinant $\Delta_0(\cdot)$ becomes
\begin{equation}\label{eq:Delta0}
    \Delta_0(\l) = J_{12} + J_{34}e^{i(b_1+b_2)\l}
    + J_{32}e^{ib_1\l} + J_{14}e^{ib_2\l}.
\end{equation}
In the case of Dirac system $(B =\diag (-1,1))$ this formula is
simplified to
\begin{equation} \label{eq:Delta0_Dirac}
    \Delta_0(\l) = J_{12} + J_{34} + J_{32}e^{-i\l} + J_{14}e^{i\l}.
\end{equation}
Substituting formulas~\eqref{eq:phi11}--\eqref{eq:phi22} at
$x=1$ to~\eqref{eq:Delta} and taking into
account~\eqref{eq:Delta0}, we get the following expression for
the characteristic determinant.
%
%
\begin{lemma} \label{lem:Delta=Delta0+}
The characteristic determinant $\Delta(\cdot)$  of the
problem~\eqref{eq:system}--\eqref{eq:BC} is an entire function
admitting  the following representation
\begin{equation}\label{eq:Delta=Delta0+}
    \Delta(\l) = \Delta_0(\l)
    + \int^1_0 g_1(t) e^{i b_1 \l t} dt
    + \int^1_0 g_2(t) e^{i b_2 \l t} dt,
\end{equation}
with  $g_1, g_2 \in L^1[0,1]$.
\end{lemma}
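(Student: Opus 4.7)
The plan is to simply substitute the integral representations \eqref{eq:phi11}--\eqref{eq:phi22} for $\varphi_{jk}(1,\l)$ into the formula \eqref{eq:Delta} for $\Delta(\l)$ and collect terms, then read off $g_1$ and $g_2$ as specific linear combinations of the traces $R_{jk}^{\pm}(1,\cdot)$.

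First, the entirety of $\Delta(\cdot)$ is a standard fact: the fundamental matrix $\Phi(x,\l)$ depends entirely on $\l$ for each fixed $x$ (the system \eqref{eq:system} depends linearly on $\l$), so $\Delta(\l) = \det U(\l)$ is entire. Alternatively, once the integral representation is established below, entirety is transparent, since both $\Delta_0(\l)$ and Fourier--Laplace integrals of $L^1$-functions against $e^{ib_k\l t}$ are entire of exponential type.

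Next, evaluate \eqref{eq:phi11}--\eqref{eq:phi22} at $x=1$. Plugging these into \eqref{eq:Delta}, the ``free'' terms $e^{ib_1\l}$ and $e^{ib_2\l}$ arising from $\varphi_{11}(1,\l)$ and $\varphi_{22}(1,\l)$ combine with $J_{12}$ and $J_{34}e^{i(b_1+b_2)\l}$ to reproduce $\Delta_0(\l)$ as in \eqref{eq:Delta0}. The remaining integrals then group naturally according to whether the exponential inside is $e^{ib_1\l t}$ or $e^{ib_2\l t}$, yielding
\begin{align*}
g_1(t) &= J_{32} R_{11}^{+}(1,t) + J_{13} R_{11}^{-}(1,t) + J_{42} R_{21}^{+}(1,t) + J_{14} R_{21}^{-}(1,t),\\
g_2(t) &= J_{32} R_{12}^{-}(1,t) + J_{13} R_{12}^{+}(1,t) + J_{42} R_{22}^{-}(1,t) + J_{14} R_{22}^{+}(1,t).
\end{align*}

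Finally, I must check that $g_1, g_2 \in L^1[0,1]$. By Proposition \ref{prop:phi.jk=e+int}, each $R_{jk}^{\pm}$ lies in $X_{\infty,0}(\Omega)$, so Lemma \ref{Trace_lemma} (applied at $a=1$) gives that every trace $R_{jk}^{\pm}(1,\cdot)$ is a well-defined element of $L^1[0,1]$. Hence $g_1$ and $g_2$, being finite linear combinations of such traces, belong to $L^1[0,1]$, which completes the argument. The whole proof is essentially a bookkeeping exercise; the only nontrivial input is the trace lemma ensuring that the $L^1$-regularity of $R_{jk}^{\pm}(1,\cdot)$ is inherited from the $X_{\infty,0}$-regularity of $R_{jk}^{\pm}(\cdot,\cdot)$, and this has already been established in Section~\ref{sec:Transform}.
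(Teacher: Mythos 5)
Your proposal is correct and follows essentially the same route as the paper: substitute the transformation-operator representations \eqref{eq:phi11}--\eqref{eq:phi22} at $x=1$ into \eqref{eq:Delta}, identify the free exponentials with $\Delta_0(\l)$, and invoke Proposition~\ref{prop:phi.jk=e+int} together with the trace Lemma~\ref{Trace_lemma} to conclude $R_{jk}^{\pm}(1,\cdot)\in L^1[0,1]$ and hence $g_1,g_2\in L^1[0,1]$. Your explicit formulas for $g_1$ and $g_2$ as the corresponding linear combinations of the traces are consistent with the paper, which merely states that the $g_j$ are such linear combinations without writing them out.
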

%
%
\begin{proof}
Consider  representations~\eqref{eq:phi11}--\eqref{eq:phi22} for
$\varphi_{jk}(\cdot,\l)$, $j,k \in \{1,2\}$. By
Proposition~\ref{prop:phi.jk=e+int},
$R_{jk}^{\pm}(\cdot,\cdot)\in X_{1,0}(\Omega) \cap
X_{\infty,0}(\Omega),$ \ $j,k \in \{1,2\}$. Therefore by Lemma
\ref{Trace_lemma}, the trace functions $R_{jk}^{\pm}(1,\cdot)$
are well defined and summable, $R_{jk}^{\pm}(1,\cdot)\in
L^1[0,1]$,  $j,k \in \{1,2\}$. Therefore one can substitute
$x=1$ in formulas~\eqref{eq:phi11}--\eqref{eq:phi22}  and obtain
special representations for $\varphi_{jk}(\cdot)$, $j,k \in
\{1,2\}$. For instance,
\begin{equation}
  \varphi_{jk}(\l) :=   \varphi_{jk}(1,\l) =
         \int_0^1 R_{jk}^{+}(1,t)e^{ib_k\l t}dt + \int_0^1 R_{jj}^{-}(1,t)e^{ib_j\l
         t}dt,\quad j\not =k.
\end{equation}
Inserting these expressions and similar expressions for
$\varphi_{jj}(\cdot)$ in \eqref{eq:Delta} and taking
formula~\eqref{eq:Delta0} for $\Delta_0(\cdot)$ into account we
arrive at~\eqref{eq:Delta=Delta0+} with  $g_j(\cdot)$, $j \in
\{1,2\}$, being  a linear combination of the functions
$R_{jk}^{\pm}(1,\cdot)$, $j,k \in \{1,2\}$.
\end{proof}
In the sequel we need the following definitions
(cf.~\cite{Katsn71}).
%
%
\begin{definition} \label{def:sequences}
\textbf{(i)} A sequence $\Lambda := \{\l_n\}_{n \in \bZ}$   of
complex numbers is said to be \textbf{separated} if for some
positive $\delta > 0,$
  \begin{equation}\label{separ_cond}
|\l_j - \l_k| > 2 \delta \quad \text{whenever}\quad  j \ne k.
   \end{equation}
In particular,  all entries of a separated  sequence are
distinct.

\textbf{(ii)}  The sequence $\Lambda$  is said to be
\textbf{asymptotically separated} if for some $n_0 \in \bN$ the
subsequence $\Lambda_{n_0} := \{\l_n\}_{|n| > n_0}$ is
separated.

\textbf{(iii)} Let $\Lambda$ lie in the strip $\Pi_h$. It is
called \textbf{incondensable} if for some $L > 0$ and $N \in
\bN$ every rectangle $[t-L,t+L] \times [-h,h] \subset \bC$
contains at  most $N$ entries of the sequence, i.e. for each $t
\in \bR$  the number of integers $\{n \in \bZ : |\Re \l_n - t|
\leqslant L, |\Im \l_n| \leqslant h\}$ does not exceed  $N$.
\end{definition}
%
%
We need the following simple property of incondensable
sequences.
%
%
\begin{lemma} \label{lem:incondensable}
Let $\Lambda = \{\l_n\}_{n \in \bZ}$ be an incondensable
sequence lying in the strip $\Pi_h$. Then there exists $\eps_0 >
0$ and $N_0 \in \bN$ such that for any $\eps \in (0, \eps_0)$
every connected component of the union of discs $\cup_{n \in
\bZ} \bD_{\eps}(\l_n)$ has at most $N_0$ discs
$\bD_{\eps}(\l_n)$.
\end{lemma}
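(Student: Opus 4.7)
The plan is to exploit the incondensability hypothesis to bound the spatial extent of any connected component, and then cover that extent by finitely many of the rectangles provided by Definition~\ref{def:sequences}(iii).

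By incondensability, fix $L > 0$ and $N \in \bN$ such that every rectangle $[t-L, t+L] \times [-h, h]$ contains at most $N$ points of $\Lambda$. Now consider a connected component $\cC$ of $\bigcup_{n \in \bZ} \bD_{\eps}(\l_n)$, and let $\l_{n_1}, \dots, \l_{n_k}$ be the centers of the discs forming $\cC$. Define a graph on these centers by joining $\l_{n_i}$ to $\l_{n_j}$ whenever the corresponding discs intersect; connectedness of $\cC$ forces this graph to be connected. Consequently, any two centers are joined by a path of length at most $k-1$ whose consecutive vertices are at distance at most $2\eps$, so the diameter satisfies $\max_{i,j} |\l_{n_i} - \l_{n_j}| \leqslant 2\eps(k-1)$.

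In particular, all the $\l_{n_i}$'s lie in some closed rectangle of real width $2\eps(k-1)$ and height $2h$, which is contained in $\Pi_h$. This rectangle can be covered by $\bigl\lceil \eps(k-1)/L \bigr\rceil + 1$ rectangles of the form $[t-L, t+L] \times [-h, h]$, so the incondensability hypothesis yields
\begin{equation}
    k \leqslant N\left(\bigl\lceil \eps(k-1)/L \bigr\rceil + 1\right) \leqslant N\left(\frac{\eps k}{L} + 1\right).
\end{equation}
Choosing $\eps_0 := L/(2N)$, for any $\eps \in (0, \eps_0)$ we have $1 - N\eps/L > 1/2$, whence
\begin{equation}
    k \leqslant \frac{N}{1 - N\eps/L} < 2N.
\end{equation}
Thus the lemma holds with $N_0 := 2N$.

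I expect no serious obstacle here: the crucial (and only nontrivial) observation is the path-in-the-intersection-graph argument bounding the diameter of a component by $2\eps(k-1)$. Once that is in hand, the rest is a direct covering estimate using the definition of incondensability, with $\eps_0$ chosen to make the bound uniform in $\eps$.
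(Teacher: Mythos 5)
Your argument is correct in substance and turns on the same key observation as the paper's proof: the intersection graph of the discs in a component is connected, a path of $m$ edges displaces the center by at most $2\eps m$, and this combines with incondensability to cap the number of discs. The paper runs the argument by contradiction, fixing one disc $D_0$ and showing that once $2K\eps<L$ all centers within graph distance $K$ of $\l_{n_0}$ fall into a \emph{single} rectangle $[t_0-L,t_0+L]\times[-h,h]$, which already violates incondensability; you instead argue directly, bound the diameter of the whole component by $2\eps(k-1)$, cover it by several rectangles, and solve the resulting inequality for $k$ --- a minor variant, arguably cleaner since it avoids the awkward quantifier bookkeeping in the paper's "assume the contrary" step. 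One small slip: the inequality $N\left(\bigl\lceil \eps(k-1)/L\bigr\rceil+1\right)\leqslant N\left(\eps k/L+1\right)$ is false for small $\eps$, since $\lceil x\rceil$ can exceed $x$ by almost $1$; the correct estimate is $\leqslant N\left(\eps k/L+2\right)$, which with the same choice $\eps_0=L/(2N)$ gives $k<4N$, so the lemma holds with $N_0=4N$ rather than $2N$. This affects only the constant, not the conclusion.
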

%
%
\begin{proof}
Assume the contrary, i. e., for any $\eps > 0$ and $K \in \bN$
there exists connected component of $\Omega_{\eps} := \cup_{n
\in \bZ} \bD_{\eps}(\l_n)$ that has at least $K$ discs
$\bD_{\eps}(\l_n)$. By definition of incondensable sequence for
some $L, N > 0$ every rectangle $[t-L,t+L] \times [-h, h]$, $t
\in \bR$, contains at most $N$ entries of the sequence
$\Lambda$. Let $K > N$ be some positive integer and pick $\eps$
to be such that $2K\eps < L$. Consider some connected component
of $\Omega_{\eps}$ that has $M \ge K$ discs $\bD_{\eps}(\l_n)$,
denote it by $C$. Let's pick one of the discs $D_0 =
\bD_{\eps}(\l_{n_0})$, $n_0 \in \bZ$, in $C$, and let $t_0 = \Re
\l_{n_0}$. Due to above, rectangle $[t_0 - L, t_0 + L] \times
[-h, h]$ contains at most $N$ entries of the sequence $\Lambda$.
Consider the sequence $B$ of all discs $\bD_{\eps}(\l_n)$ in $C$
that have graph distance at most $K$ from $D_0$. Let's shows
that $B$ has more than $N$ discs. If no disc in $C$ has graph
distance at least $K$ from $D_0$, then $B$ contains all discs
from $C$ and thus cardinality of $B$ is $M \ge K > N$.
Otherwise, $B$ has some disc $D$ with distance $K$ from $D_0$.
All discs on the path from $D_0$ to $D$ belong to $B$ and hence
$B$ has at least $K > N$ discs. For each disc $\bD_{\eps}(\l_n)$
in $B$ since graph distance from it to $D_0$ is at most $K$ and
disc radii are $\eps$ we have $|\l_n - \l_{n_0}| < 2 K \eps$.
Thus, $\l_n \in [t_0 - L, t_0 + L] \times [-h, h]$ since $2 K
\eps < L$. Thus centers of all discs in $B$ lie in $[t_0 - L,
t_0 + L] \times [-h, h]$. Since there more than $N$ discs in $B$
it contradicts incondensability property of the sequence
$\Lambda$.
\end{proof}
To get the asymptotic behavior of the eigenvalues of the
problem~\eqref{eq:system}--\eqref{eq:BC} with regular boundary
conditions we also need the following definition.
%
%
\begin{definition}\cite{Lev61} \label{def:regular}
An entire function $F(\cdot)$ of exponential type is said to be
of \textbf{sine-type} if

\textbf{(i)} all zeros of $F(\cdot)$  lie in the strip $\Pi_h$
for some $h>0$, and

\textbf{(ii)}  there exists  $C_1, C_2 > 0$ and $h_0 > h$ such
that
\begin{equation} \label{eq:C1<|Fz|<C2}
     0 < C_1 \leqslant |F(x + ih_0)| \leqslant C_2 <\infty, \quad x\in \Bbb R.
\end{equation}
\end{definition}
%
%
This definition is borrowed from~\cite{Lev61} (see
also~\cite{Katsn71}). It  differs from that contained
in~\cite{Lev96}. Namely, it is  assumed in~\cite{Lev96} that the
sequence of zeros of $F(\cdot)$ is \emph{separated}  and the
indicator function $h_F(\cdot)$ of $F(\cdot)$,
\begin{equation} \label{eq:hF.phi.def}
    h_F(\varphi) := \varlimsup_{r \to +\infty}
    \frac{\ln\left|F\left(r e^{i \varphi}\right)\right|}{r},
    \quad \varphi \in (-\pi,\pi],
\end{equation}
satisfies the  condition $h_F(\pi/2) = h_F(-\pi/2)$. The latter
is imposed for convenience and can easily be achieved by
multiplication of $F(\cdot)$ by a function $e^{i\gamma z}$ with
an appropriate $\gamma \in \mathbb R.$

Recall also the definition of regular boundary conditions.
%
%
\begin{definition} \label{def:regular}
Boundary conditions~\eqref{eq:BC}  are called \textbf{regular}
if
\begin{equation} \label{eq:J32J14ne0}
    J_{14} J_{32} \ne 0.
\end{equation}
\end{definition}
%
%
In the case of regular boundary conditions  the characteristic
determinant $\Delta_0(\cdot)$ has certain  important properties.
%
%
\begin{proposition} \label{prop:sine.type}
Let the boundary conditions~\eqref{eq:BC} be regular and let
$\Delta(\cdot)$ be  the characteristic determinant of the
problem~\eqref{eq:system}--\eqref{eq:BC} given by
\eqref{eq:Delta}. Then the following hold:

\textbf{(i)} The characteristic determinant $\Delta(\cdot)$ is a
sine-type function with $h_{\Delta}(\pi/2) = -b_1$ and
$h_{\Delta}(-\pi/2) = b_2$. In particular, $\Delta(\cdot)$ has
infinitely many  zeros
\begin{equation} \label{eq:Lam0.def}
    \Lambda := \{\l_n\}_{n \in \bZ}
\end{equation}
counting multiplicities  and $\Lambda \subset \Pi_h$ for some
$h>0$.

\textbf{(ii)} The sequence $\Lambda$ is incondensable.

\textbf{(iii)} For any $\eps > 0$  the determinant
$\Delta(\cdot)$ admits the following estimate from below
\begin{equation} \label{eq:Delta0>=}
    |\Delta(\l)| \geqslant C_{\eps}(e^{-b_1 \Im \l} + e^{-b_2 \Im \l}),
    \quad \l \in \bC \setminus \bigcup_{n \in \bZ} \bD_{\eps}(\l_n),
\end{equation}
with some $C_{\eps} > 0$.

\textbf{(iv)} The sequence $\Lambda$ can be ordered in such a
way that the following asymptotical formula  holds
\begin{equation} \label{eq:lam.n=an+o1}
    \l_n = \frac{2 \pi n}{b_2 - b_1} (1 + o(1)) \quad\text{as}\quad n \to \infty.
\end{equation}
\end{proposition}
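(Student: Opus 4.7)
My plan is to derive all four claims from the representation
\[
    \Delta(\l) = \Delta_0(\l) + \int_0^1 g_1(t)\, e^{i b_1 \l t}\,dt + \int_0^1 g_2(t)\, e^{i b_2 \l t}\,dt
\]
supplied by Lemma~\ref{lem:Delta=Delta0+}, together with the Riemann--Lebesgue type estimate of Lemma~\ref{lem:RimLeb}.

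For (i), the key observation is that since $b_1 < 0 < b_2$, the term $J_{32}\, e^{i b_1 \l}$ in~\eqref{eq:Delta0}, of modulus $|J_{32}|\, e^{-b_1 \Im\l}$, dominates all other summands of $\Delta_0$ as $\Im\l \to +\infty$; simultaneously Lemma~\ref{lem:RimLeb} makes both perturbation integrals $o\!\bigl(e^{-b_1 \Im\l}\bigr)$ on that half-plane. This yields the uniform asymptotic $\Delta(\l) = J_{32}\, e^{ib_1\l}(1+o(1))$ as $\Im\l \to +\infty$, and symmetrically $\Delta(\l) = J_{14}\, e^{ib_2\l}(1+o(1))$ as $\Im\l \to -\infty$. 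The regularity condition $J_{14}J_{32} \ne 0$ then confines the zeros of $\Delta$ to some strip $\Pi_h$, and choosing $h_0 > h$ sufficiently large gives $C_1 \leqslant |\Delta(x+ih_0)| \leqslant C_2$ uniformly in $x \in \bR$, which is precisely the sine-type property. The indicator values $h_\Delta(\pi/2) = -b_1$ and $h_\Delta(-\pi/2) = b_2$ are read off by evaluating the same two asymptotics on the imaginary axis.

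The lower bound~\eqref{eq:Delta0>=} of (iii) follows from (i): on the half-planes $\pm\Im\l \geqslant h_0$ it reduces to $|\Delta(\l)| \geqslant \tfrac12|J_{32}|\, e^{-b_1 \Im\l}$ (respectively $\tfrac12|J_{14}|\, e^{-b_2\Im\l}$), which is comparable to $e^{-b_1\Im\l}+e^{-b_2\Im\l}$ on those half-planes; on the horizontal strip $|\Im\l|\leqslant h_0$ the right-hand side of~\eqref{eq:Delta0>=} is itself bounded, and a uniform positive lower bound outside $\eps$-neighborhoods of zeros follows from the standard minimum modulus theorem for entire functions of finite exponential type. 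Claim (ii) then drops out of sine-type as well: applying Jensen's formula on each disc $\bD_{2L}(t)$, $t\in\bR$, together with the upper bound $|\Delta(z)| \leqslant Ce^{K|\Im z|}$ and the lower bound from (iii) at the center, caps the zero count in $[t-L, t+L]\times[-h_0, h_0]$ by a constant independent of $t$.

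For (iv), the sine-type property with indicators $h_\Delta(\pi/2)=-b_1$ and $h_\Delta(-\pi/2)=b_2$ (i.e.\ indicator diagram the imaginary segment $[ib_1, ib_2]$ of width $b_2-b_1$) places us squarely inside the scope of Levin's classical theorem on the zero distribution of sine-type functions: the zeros can be enumerated so that $\l_n = \tfrac{2\pi n}{b_2-b_1} + O(1)$, which is a stronger form of~\eqref{eq:lam.n=an+o1}. The main obstacle in the whole argument is the very first step---upgrading the pointwise Riemann--Lebesgue lemma to the uniform (in $\Re\l$) asymptotics of (i)---which is exactly the content of Lemma~\ref{lem:RimLeb}; once (i) is in hand, (ii)--(iv) follow from standard sine-type theory, with the $L^1$-smallness of the integral perturbations playing no further role.
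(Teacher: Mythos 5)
Your argument is correct in substance but reaches the conclusion by a somewhat different route than the paper. The paper first absorbs the $L^1$ perturbations $g_1,g_2$ into a single Fourier--Stieltjes representation $\Delta(\l)=\int_{b_1}^{b_2}e^{it\l}\,d\sigma(t)$, where $\sigma$ has nonzero jumps $J_{32}$ and $J_{14}$ at the endpoints $b_1$ and $b_2$; this puts $\Delta$ directly into the classical framework, and all four statements are then quoted from the literature (Levin and Leont'ev for (i) and (iv), Katsnel'son's Lemmas 3--4 and \cite[Lemma 22.1]{Lev96} for (ii) and (iii)). You instead extract the sine-type property by a direct dominant-term analysis of \eqref{eq:Delta0} on the half-planes $\Im\l\to\pm\infty$ together with Lemma~\ref{lem:RimLeb}, which is a perfectly sound and more self-contained derivation of (i), and your Jensen-formula argument for (ii) is a legitimate replacement for the citation of \cite{Katsn71}. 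What each approach buys: the paper's endpoint-jump representation makes the appeal to the standard theory essentially one-line, while your version makes visible exactly where regularity ($J_{14}J_{32}\ne0$) and the $L^1$-smallness of the perturbation enter.

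Two small corrections. First, there is no ``minimum modulus theorem for entire functions of finite exponential type'' in the generality you invoke for the strip part of (iii): an exponential-type function with clustering zeros can be arbitrarily small at points lying at distance $\eps$ from its zero set. The correct tool is the minimum-modulus estimate for \emph{sine-type} functions (\cite[Lemma 22.1]{Lev96}, \cite{Katsn71}), whose proof itself uses the incondensability of the zeros. Consequently you should establish (ii) \emph{before} (iii); this causes no circularity, since your Jensen argument for (ii) only needs the upper bound $|\Delta(z)|\le Ce^{K|\Im z|}$ and a lower bound at the center of each disc, which you may take on the line $\Im\l=h_0$ where part (i) already gives $|\Delta(x+ih_0)|\ge C_1$. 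Second, Levin's theorem for sine-type functions does give the stronger $\l_n=\frac{2\pi n}{b_2-b_1}+O(1)$, but note the paper deliberately settles for the density statement (via Leont'ev's class-$A$ theorem), which already yields \eqref{eq:lam.n=an+o1}; either suffices.
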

%
%
\begin{proof}
\textbf{(i).} Let $\Delta_0(\cdot)$  be the characteristic
determinant of the problem~\eqref{eq:system}--\eqref{eq:BC} with
$Q=0$. It easily follows from \eqref{eq:Delta0} that
$\Delta_0(\cdot)$ admits a representation
   \begin{equation}\label{4.12}
\Delta_0(\l)  = \int_{b_1}^{b_2}e^{it\l}d\sigma_0(t), \qquad \l\in \mathbb C,
    \end{equation}
with a  piecewise constant  function  $\sigma_0(\cdot)$ having
precisely four jump-points $\{0, b_1, b_1 + b_2, b_2\}$. In
particular,
  \begin{equation}\label{4.13}
\sigma_0(b_1+0) - \sigma_0(b_1) = J_{32}\not = 0\quad  \text{and} \quad \sigma_0(b_2) -
\sigma_0(b_2 -0) = J_{14} \not = 0.
   \end{equation}
Let us set
\begin{equation}
g(t)=
\begin{cases}
-\frac{1}{b_1}g_1(\frac{t}{b_1}),&t\in[b_1, 0),  \\
\frac{1}{b_2}g_2(\frac{t}{b_2}),&t\in[0,b_2],
\end{cases}
  \end{equation}
and
  \begin{equation}\label{4.15}
\sigma(t) = \sigma_0(t) + \int_{b_1}^t g(s)ds.
  \end{equation}
Combining these notations with formulas~\eqref{eq:Delta=Delta0+}
and~\eqref{4.12} we arrive at the following  representation for
the characteristic  determinant
   \begin{equation}\label{4.12New}
\Delta(\l)  = \int_{b_1}^{b_2}e^{it\l}d\sigma(t), \qquad \l\in \mathbb C,
    \end{equation}
It follows from~\eqref{4.15} and~\eqref{4.13} that
     \begin{equation}\label{4.17}
 \sigma(b_1+0) - \sigma(b_1)  = J_{32}\not = 0 \quad \text{and}\quad  \sigma(b_2) -
\sigma(b_2 -0) = J_{14} \not = 0.
     \end{equation}
Due to the property~\eqref{4.17} representation~\eqref{4.12New}
ensures that $\Delta(\cdot)$ is  a sine-type function with
$h_{\Delta_0}(\pi/2) = -b_1$ and $h_{\Delta_0}(-\pi/2) = b_2$
(see~\cite{Lev96}). Moreover, statement \textbf{(i)} is also
implied by the representation~\eqref{4.12New} (see~\cite[Chapter
1.4.3]{Leon76}).

\textbf{(ii)} and \textbf{(iii)}. These statements  coincide
with the corresponding statements of~\cite[Lemmas 3 and
4]{Katsn71}  for sine-type functions (see also \cite[Lemma
22.1]{Lev96} in connection with part \textbf{(iii)}).

\textbf{(iv)} The determinant $\Delta(\cdot)$ belongs to the
class $A$  since its zeros lie in the strip $\Pi_h$ (this fact
is also immediate from representation~\eqref{4.12New}).
Therefore it follows from~\cite[Theorem 1.4.6]{Leon76} that for
any $\eps \in (0, \pi/2)$
\begin{equation} \label{eq:nt/t}
    \lim_{t \to \infty} \frac{n_{\pm}^{(\eps)}(t)}{t} = \frac{2\pi}{b_2-b_1}.
\end{equation}
Here $n_+^{(\eps)}(t) = \card \left\{n \in \bZ : |\l_n| < t,
|\arg \l_n| < \eps \right\}$ is the number of zeros of
$\Delta(\cdot)$ in the domain $\{z : |\arg z| < \eps, |z| < t\}$
counting multiplicity, and $n_-^{(\eps)}(t) = \card \left\{n \in
\bZ : |\l_n| < t, |\pi - \arg \l_n| < \eps \right\}$. Since
$\Lambda$ lies in the strip $\Pi_h$, asymptotic
formula~\eqref{eq:lam.n=an+o1} directly follows
from~\eqref{eq:nt/t} (see e.g.~\cite[Proposition
13.1]{Shubin87}).
\end{proof}
Clearly, the conclusions of Proposition~\ref{prop:sine.type} are
valid for the perturbation determinant  $\Delta_0(\cdot)$ given
by~\eqref{eq:Delta0}. Let $\Lambda_0 = \{\l_n^0\}_{n \in \bZ}$
be the sequence  of its zeros counting multiplicity.  Let us
order the sequence $\Lambda_0$  in a (possibly non-unique) way
such that $\Re \l_n^0 \le \Re \l_{n+1}^0$, $n\in \bZ.$
%
%
\begin{proposition} \label{prop:Delta.regular.basic}
Let $Q \in L^1[0,1] \otimes \bC^{2 \times 2}$,  let boundary
conditions~\eqref{eq:BC} be regular, and let $\Delta(\cdot)$ be
the corresponding characteristic determinant. Then the sequence
$\Lambda = \{\l_n\}_{n \in \bZ}$ of its zeros can be ordered in
such a way that the following asymptotic formula holds
\begin{equation} \label{eq:l.n=l.n0+o(1)}
    \l_n = \l_n^0 + o(1), \quad\text{as}\quad n \to \infty, \quad n \in \bZ.
\end{equation}
\end{proposition}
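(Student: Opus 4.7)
The plan is to apply Rouch\'e's theorem to the pair $(\Delta, \Delta_0)$ on the boundaries of small discs surrounding the zeros of $\Delta_0$, exploiting Lemma~\ref{lem:Delta=Delta0+} together with the Riemann--Lebesgue type estimate of Lemma~\ref{lem:RimLeb} to show that the perturbation $\Delta - \Delta_0$ is small compared with $\Delta_0$, and then using the incondensability of $\Lambda_0$ (Lemma~\ref{lem:incondensable}) to conclude that inside each connected component of the union of discs the number of zeros of $\Delta$ equals that of $\Delta_0$, with both sets having diameter that can be made arbitrarily small.

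First, I would fix $h > 0$ so large that, by Proposition~\ref{prop:sine.type}(i) applied to both $\Delta(\cdot)$ and $\Delta_0(\cdot)$, all zeros of both functions lie in $\Pi_h$. By Lemma~\ref{lem:Delta=Delta0+} one has
\begin{equation*}
    \Delta(\l) - \Delta_0(\l) = \int_0^1 g_1(t) e^{i b_1 \l t} dt + \int_0^1 g_2(t) e^{i b_2 \l t} dt, \qquad g_1, g_2 \in L^1[0,1].
\end{equation*}
Applying Lemma~\ref{lem:RimLeb} with $c = i b_j$ (so that $\Re(c\l) = -b_j \Im \l$), one obtains that for every $\eps > 0$ there exists $M = M(\eps)$ such that
\begin{equation*}
    |\Delta(\l) - \Delta_0(\l)| \le \eps \bigl(e^{-b_1 \Im \l} + e^{-b_2 \Im \l} + 2\bigr), \qquad |\l| > M.
\end{equation*}
Since on the strip $\Pi_h$ the quantities $e^{-b_j \Im \l}$ are bounded below by $e^{-|b_j| h}$, the additive constant $2$ is absorbed, yielding a universal $C_h > 0$ for which
\begin{equation*}
    |\Delta(\l) - \Delta_0(\l)| \le \eps C_h \bigl(e^{-b_1 \Im \l} + e^{-b_2 \Im \l}\bigr), \qquad \l \in \Pi_h,\ |\l| > M(\eps).
\end{equation*}

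Next, given an arbitrary $\eps' > 0$, I would invoke Lemma~\ref{lem:incondensable} to choose $\eps_1 \in (0, \eps')$ small enough so that every connected component $\Omega_\alpha$ of $\bigcup_{n\in\bZ} \bD_{\eps_1}(\l_n^0)$ contains at most $N_0$ discs and hence has diameter at most $2 N_0 \eps_1$. Applying Proposition~\ref{prop:sine.type}(iii) to $\Delta_0$ with this $\eps_1$ yields
\begin{equation*}
    |\Delta_0(\l)| \ge C_{\eps_1} \bigl(e^{-b_1 \Im \l} + e^{-b_2 \Im \l}\bigr), \qquad \l \in \bC \setminus \bigcup_{n \in \bZ} \bD_{\eps_1}(\l_n^0).
\end{equation*}
Choosing $\eps$ so that $\eps C_h < \tfrac{1}{2} C_{\eps_1}$ we get $|\Delta(\l) - \Delta_0(\l)| < \tfrac{1}{2} |\Delta_0(\l)|$ on $\partial \Omega_\alpha \cap \Pi_h$ for all large $|\l|$; outside $\Pi_h$ the same estimate holds by choosing $h$ larger if necessary, because there both functions have no zeros and the lower bound still applies along a horizontal boundary. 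Rouch\'e's theorem then ensures that $\Delta$ and $\Delta_0$ have the same number of zeros, counted with multiplicity, inside each such component.

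Finally, I would match zeros within each component: since each component has diameter $\le 2 N_0 \eps_1 < 2 N_0 \eps'$, any bijection between the zeros of $\Delta$ and $\Delta_0$ inside that component differs by at most $2 N_0 \eps'$. Performing such matching in every component with $|\l_n^0|$ sufficiently large, and ordering the resulting sequence $\{\l_n\}$ consistently with the chosen ordering of $\{\l_n^0\}$, gives $|\l_n - \l_n^0| < 2 N_0 \eps'$ for all sufficiently large $|n|$. Since $\eps' > 0$ is arbitrary, this yields \eqref{eq:l.n=l.n0+o(1)}. The main technical point is the careful balancing of the three small parameters (the Rouch\'e smallness $\eps$, the disc radius $\eps_1$, and the incondensability bound $N_0$) against the $\eps_1$-dependence of the constant $C_{\eps_1}$ in the lower bound for $\Delta_0$, which is what dictates how large $|n|$ must be before the claimed asymptotic kicks in.
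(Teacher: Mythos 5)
Your proposal is correct and follows essentially the same route as the paper's proof: the estimate $|\Delta-\Delta_0|$ via Lemma~\ref{lem:Delta=Delta0+} and Lemma~\ref{lem:RimLeb}, the lower bound for $\Delta_0$ from Proposition~\ref{prop:sine.type}(iii), Rouch\'e's theorem on the connected components of $\bigcup_n\bD_{\eps}(\l_n^0)$, and Lemma~\ref{lem:incondensable} to bound the diameter of each component by $2N_0\eps$. The only cosmetic difference is how the additive constant from Lemma~\ref{lem:RimLeb} is absorbed (the paper simply uses that $e^{-b_1\Im\l}+e^{-b_2\Im\l}\ge 1$ since $b_1<0<b_2$, which also makes your digression about the region outside $\Pi_h$ unnecessary, as all relevant boundaries lie in a strip).
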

%
%
\begin{proof}
Let $\eps \in (0, 1)$. By Proposition~\ref{prop:sine.type}(iii)
there exists $C_{\eps}
> 0$ such that the estimate~\eqref{eq:Delta0>=} holds.
Combining  Lemma~\ref{lem:Delta=Delta0+} with
Lemma~\ref{lem:RimLeb} yields the following estimate
\begin{equation} \label{eq:Delta-Delta0}
    |\Delta(\l) - \Delta_0(\l)| < 4^{-1}C_{\eps} (e^{-b_1 \Im \l} + e^{-b_2 \Im \l}+ 2\eps)
    \le 2^{-1}C_{\eps} (e^{-b_1 \Im \l} + e^{-b_2 \Im \l}).
        \quad |\l| \geqslant \  M_{\eps}.
\end{equation}
with certain $M_{\eps} > 0$. Here in the last inequality we have
used that $b_1 < 0 < b_2$.

Due to estimates~\eqref{eq:Delta0>=}
and~\eqref{eq:Delta-Delta0}, the Rouche theorem implies that all
zeros of $\Delta(\cdot)$ lie in the domain
\begin{equation}
    \wt{\Omega}_{\eps} := \bD_{M_{\eps}}(0) \cup \Omega_{\eps}, \quad
    \Omega_{\eps} := \bigcup\limits_{n \in \bZ} \bD_{\eps}(\l_n^0),
\end{equation}
and in each connected component of $\wt{\Omega}_{\eps}$ the
functions  $\Delta(\cdot)$ and  $\Delta_0(\cdot)$ have  the same
number of zeros counting multiplicity. Since in accordance with
Proposition \ref{prop:sine.type}(ii), the sequence of zeros
$\{\l_n^0\}_{n \in \bZ}$ is incondensable,
Lemma~\ref{lem:incondensable} implies that for $\eps$ small
enough each connected component of $\Omega_{\eps}$ contains at
most $N_0$ discs $\bD_{\eps}(\l_n^0)$ with $N_0$ not depending
on $\eps$. Hence the diameter of each connected component of
$\Omega_{\eps}$ does not exceed $2 \eps N_0$. Since ${\eps}>0$
is arbitrary small, the latter implies the desired asymptotic
formula~\eqref{eq:l.n=l.n0+o(1)}.
\end{proof}
%
%
\section{Strictly regular boundary conditions}
\label{sec:StrictRegular}
%
%
Assuming boundary conditions~\eqref{eq:BC} to be regular, let us
rewrite them in a more convenient form. Since $J_{14} \ne 0$,
the inverse matrix $A_{14}^{-1}$ exists. Therefore writing down
boundary conditions~\eqref{eq:BC} as  the vector equation
$\binom {U_1(y)} {U_2(y)} = 0$ and multiplying  it by the matrix
$A_{14}^{-1}$ we  transform them as follows
  \begin{equation} \label{eq:BC.new}
\begin{cases}
   {\widehat U}_{1}(y) = y_1(0) + b y_2(0) + a y_1(1) &= 0, \\
    {\widehat U}_{2}(y) = d y_2(0) + c y_1(1) + y_2(1) &= 0,
\end{cases}
\end{equation}
with some $a,b,c,d \in \bC$. Now $J_{14} = 1$ and  the boundary
conditions ~\eqref{eq:BC.new} are regular if and only if
$J_{32} = ad-bc \ne 0$. So, the characteristic determinants
$\Delta_0(\cdot)$ and $\Delta(\cdot)$ take the form
\begin{align}
    \label{eq:Delta0.new}
    \Delta_0(\l) &= d + a e^{i (b_1+b_2) \l} + (ad-bc) e^{i b_1 \l} + e^{i b_2 \l}, \\
    \label{eq:Delta.new}
    \Delta(\l)   &= d + a e^{i (b_1+b_2) \l} + (ad-bc) \varphi_{11}(\l) + \varphi_{22}(\l)
    + c \varphi_{12}(\l) + b \varphi_{21}(\l).
\end{align}

Now we are ready to introduce a notion of strictly regular
boundary conditions.
%
%
\begin{definition} \label{def:strictly.regular}
Boundary conditions~\eqref{eq:BC} are called \textbf{strictly
regular}, if they are regular, i.e. $J_{14} J_{32} \ne 0$, and
the sequence of zeros  $\Lambda_0 = \{\l_n^0\}_{n \in \bZ}$ of
the characteristic determinant $\Delta_0(\cdot)$ is
asymptotically separated. In particular, there exists $n_0$ such
that zeros $\{\l_n^0\}_{|n| > n_0}$  are geometrically and
algebraically simple.
\end{definition}
%
%
It follows from Proposition~\ref{prop:Delta.regular.basic} that
the sequence $\Lambda = \{\l_n\}_{n \in \bZ}$ of zeros of
$\Delta(\cdot)$ is asymptotically separated if the boundary
conditions are strictly regular.
%
%
\begin{remark} \label{rem:cond.examples}
Let us list some types of \emph{strictly regular} boundary
conditions~\eqref{eq:BC.new}. In all of these cases the set of
zeros of $\Delta_0$ is a union of finite number of arithmetic
progressions.

\textbf{(i)} Separated boundary conditions ($a=d=0$, $bc \ne 0$)
are always strictly regular.

\textbf{(ii)} Let $b_1 / b_2 \in \bQ$, i.e. $b_1 = -n_1 b$, $b_2
= n_2 b$, $n_1, n_2 \in \bN$, $b>0$. Since $ad \ne bc$,
$\Delta_0(\cdot)$ is a polynomial at $e^{i b \l}$ of degree $n_1
+ n_2$. Hence, boundary conditions~\eqref{eq:BC.new} are
strictly regular if and only if this polynomial does not have
multiple roots. In particular, regular boundary
conditions~\eqref{eq:BC.new} for Dirac operator are strictly
regular if and only if  $(a-d)^2 \ne -4bc$.
\end{remark}
%
%
\begin{lemma} \label{lem:bc=0.regular}
Let $bc=0$ and boundary conditions~\eqref{eq:BC.new} are regular
(i.e. $ad \ne 0$).

\textbf{(i)} Let
\begin{equation} \label{eq:b1.lnd+b2.lna}
    b_1 \ln |d| + b_2 \ln |a| \ne 0.
\end{equation}
Then  conditions~\eqref{eq:BC.new} are strictly regular.

\textbf{(ii)} Let $b_1/b_2 \not\in \bQ$. Then
condition~\eqref{eq:b1.lnd+b2.lna} is necessary for the strict
regularity of boundary conditions~\eqref{eq:BC.new}.

\textbf{(iii)} Let $b_1/b_2 \in \bQ$ and
condition~\eqref{eq:b1.lnd+b2.lna} is violated. Then boundary
conditions~\eqref{eq:BC.new} are strictly regular if and only if
\begin{equation} \label{eq:gcd.b1.b2}
    \frac{b_1 \arg(-d) + b_2 \arg(-a)}{2 \pi \gcd(b_1, b_2)} \not\in \bZ,
\end{equation}
where $\gcd(b_1, b_2)$ is the greatest common divisor of real
numbers $b_1$, $b_2$, i.e. the largest number $b>0$ such that
$b_1/b$ and $b_2/b$ are integers.
\end{lemma}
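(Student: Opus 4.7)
The plan is to exploit the factorization
\begin{equation*}
\Delta_0(\lambda) = \bigl(d + e^{ib_2\lambda}\bigr)\bigl(1 + a e^{ib_1\lambda}\bigr),
\end{equation*}
which is immediate from \eqref{eq:Delta0.new} once $bc = 0$. This identifies the zero set $\Lambda_0$ explicitly as the union of two horizontal arithmetic progressions: $\Lambda_0^{(1)}$ at imaginary height $-\ln|d|/b_2$ with real-part spacing $2\pi/b_2$, and $\Lambda_0^{(2)}$ at height $\ln|a|/b_1$ with spacing $2\pi/|b_1|$. Each family is internally separated, so all three parts reduce to analyzing the relative position of these two progressions.

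For (i), I would observe that condition \eqref{eq:b1.lnd+b2.lna} is precisely the statement $-\ln|d|/b_2 \ne \ln|a|/b_1$: the two families sit on distinct horizontal lines and hence are globally separated by a positive vertical gap, which together with the internal spacings yields strict (indeed global) separation of $\Lambda_0$.

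For (ii), when \eqref{eq:b1.lnd+b2.lna} fails the two progressions share one horizontal line, and the differences $A_k - B_m$ of their real parts become affine combinations of $k/b_2$ and $m/b_1$. With $b_1/b_2$ irrational, I would invoke Kronecker's (or Weyl's) density theorem to see that $\{k/b_2 - m/b_1 : k, m \in \bZ\}$ is dense in $\bR$, and then Dirichlet's approximation theorem to drive $|A_k - B_m|$ to zero along sequences with $|k|, |m| \to \infty$. This defeats asymptotic separation and forces \eqref{eq:b1.lnd+b2.lna} as a necessary condition.

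For (iii), I would write $b = \gcd(b_1, b_2)$, $b_j = n_j b$ with $\gcd(|n_1|, n_2) = 1$, so that $A_k - B_m = (2\pi/(n_1 n_2 b))(c + k n_1 - m n_2)$ with $c := (b_1\arg(-d) + b_2\arg(-a))/(2\pi b)$. By Bezout, $\{k n_1 - m n_2 : k, m \in \bZ\} = \bZ$, so $A_k = B_m$ for some $(k,m)$ iff $c \in \bZ$, i.e.\ iff \eqref{eq:gcd.b1.b2} fails; the shared periodicity of the two progressions then forces infinitely many double zeros of $\Delta_0$, breaking strict regularity. Conversely, under \eqref{eq:gcd.b1.b2} one has $\operatorname{dist}(c, \bZ) > 0$, giving a uniform positive horizontal gap between the families and hence strict regularity. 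The main obstacle will be the bookkeeping in this last step: verifying the Bezout-based surjection $\{kn_1 - mn_2\} = \bZ$ (keeping track of $n_1 < 0 < n_2$), reconciling $c \in \bZ$ with the exact form of \eqref{eq:gcd.b1.b2}, and checking that the $2\pi$-ambiguities in $\arg(-a)$ and $\arg(-d)$ shift $c$ only by the integers $b_j/b$, so that the condition is well-posed.
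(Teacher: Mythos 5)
Your proposal is correct and follows essentially the same route as the paper: factor $\Delta_0$ into the two elementary factors, identify the zeros as two horizontal arithmetic progressions, note that \eqref{eq:b1.lnd+b2.lna} is exactly the statement that the two lines are distinct, invoke Kronecker's theorem in the irrational case, and reduce the rational case to solvability of the linear Diophantine equation (the paper phrases your Bezout step as the condition $r/\gcd(\alpha,1)\in\bZ$ for the equation $n+\alpha m=-r$). Your extra remark on the well-posedness of \eqref{eq:gcd.b1.b2} under $2\pi$-shifts of the arguments is a point the paper leaves implicit.
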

%
%
\begin{proof}
\textbf{(i)} Since $bc=0$, the characteristic determinant $
\Delta_0(\cdot)$ in~\eqref{eq:Delta0.new} becomes
\begin{equation} \label{eq:Delta0.bc=0}
    \Delta_0(\l) = d + a e^{i (b_1+b_2) \l} + a d e^{i b_1 \l} + e^{i b_2 \l}
                 = (1 + a e^{i b_1 \l}) (d + e^{i b_2\l}).
\end{equation}
Let $\Lambda_1 = \{\l_{1,n}\}_{n \in \bZ}$ and $\Lambda_2 =
\{\l_{2,n}\}_{n \in \bZ}$ be the sequences of zeros of the first
and second factor, respectively. Clearly,
\begin{equation} \label{eq:l1n.l2n.bc=0}
    \l_{1,n} = \frac{\arg(-a^{-1}) + 2 \pi n}{b_1} + i\frac{\ln|a|}{b_1}, \qquad
    \l_{2,n} = \frac{\arg(-d) + 2 \pi n}{b_2} - i\frac{\ln|d|}{b_2}, \qquad n \in \bZ.
\end{equation}
Thus, $\Lambda_1$ and $\Lambda_2$ are algebraically simple and
constitute  two arithmetic progressions that lie on two lines
parallel to the real axis. Condition~\eqref{eq:b1.lnd+b2.lna}
written in the form $\frac{\ln|a|}{b_1} \ne -\frac{\ln|d|}{b_2}$
implies  that  these horizontal lines are different. It follows
that the sequence of zeros of $\Delta_0(\cdot)$ is separated and
hence boundary conditions~\eqref{eq:BC.new} are strictly
regular.

\textbf{(ii)} Now assume that $\alpha := -b_1/b_2 \not \in \bQ$
and condition~\eqref{eq:b1.lnd+b2.lna} is violated. In this case
$\Im \l_{1,n} = \Im \l_{2,m} = \frac{\ln|a|}{b_1} =
-\frac{\ln|d|}{b_2}$  for  $n,m \in \bZ$, i.e. the progressions
$\Lambda_1$ and $\Lambda_2$ lie on the same line parallel to the
real axis. Hence
\begin{equation} \label{eq:l1n-l2m}
    |\l_{1,n} - \l_{2,m}| = 2 \pi b_1^{-1} | r + n + \alpha m|, \quad
    r = \frac{\arg(-a^{-1})+\alpha \arg(-d)}{2 \pi} \in \bR.
\end{equation}
Since $\alpha$ is irrational, the Kronecker theorem ensures that
for any $\eps > 0$ and  $M > 0$ there exist $n,m \in \bZ$ such
that $|n|, |m| > M$ and $|r + n + \alpha m| < \eps$. This means
that the zeros of $\Delta_0(\cdot)$ are not asymptotically
separated, which  proves  the result.

\textbf{(iii)} Finally, assume that
condition~\eqref{eq:b1.lnd+b2.lna} is violated while   $\alpha =
- b_1/b_2 \in \bQ$. Now as on the previous step,  the
progressions  $\Lambda_1$ and $\Lambda_2$ lie on the same line
parallel to the real axis  and  condition \eqref{eq:l1n-l2m}
holds. Since $b_1/b_2$ is rational, the union of the arithmetic
progressions $\Lambda_1$ and $\Lambda_2$ is asymptotically
separated if and only if they have no common entries. Due
to~\eqref{eq:l1n-l2m} this is equivalent to the fact that
Diophantine equation $n + \alpha m = -r$ does not have integer
solutions $n,m$. It is well-known that such equation has
solutions if and only if $r / \gcd(\alpha, 1) \in \bZ$. Since
$\arg(-a^{-1}) = -\arg(-a)$, this is equivalent to the condition
opposite to~\eqref{eq:gcd.b1.b2}, which completes the proof.
\end{proof}
\begin{remark}
 Consider the case  $b = c=0$. It includes
\emph{periodic} $(a=d= -1)$ and \emph{antiperiodic} $(a=d= 1)$
boundary conditions. So, it follows from the statements (i)
$($see~\eqref{eq:b1.lnd+b2.lna}$)$ and (ii)  that the
\emph{periodic}  and \emph{antiperiodic} BC are \textbf{strictly
regular} if and only if  $b_1 + b_2\not =0$. This fact
demonstrates a substantial difference between Dirac and Dirac
type  operators.
  \end{remark}
The following result demonstrates that in the case $b_1 / b_2
\not\in \bQ$ the problem of strict regularity of boundary
conditions~\eqref{eq:BC.new} is much  more complicated than the
one discussed  in Remark~\ref{rem:cond.examples} and
Lemma~\ref{lem:bc=0.regular}.
%
%
\begin{proposition}\label{prop_strict_regular}
\textbf{(i)} Let $\alpha := -b_1/b_2 \not \in \bQ$, $a=0$, $bc,
d \in \bR \setminus \{0\}$, then boundary
conditions~\eqref{eq:BC.new} are strictly regular if and only if
\begin{equation} \label{eq:a=0.crit}
    d \ne -(\alpha+1)\left(|bc| \alpha^{-\alpha}\right)^{\frac{1}{\alpha+1}}.
\end{equation}

\textbf{(ii)} Let $a=0$, $bc \ne 0$, $b_1 = -n_1 b$, $b_2 = n_2
b$, $n_1, n_2 \in \bN$, $b>0$ and $\gcd(n_1,n_2)=1$. Then
boundary conditions~\eqref{eq:BC.new} are strictly regular if
and only if
\begin{equation} \label{eq:a=0.crit.rat}
    n_1^{n_1} n_2^{n_2} (-d)^{n_1 + n_2} \ne (n_1 + n_2)^{n_1 + n_2} (-b c)^{n_2}.
\end{equation}
\end{proposition}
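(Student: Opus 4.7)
For part (ii), my plan is to turn the transcendental equation $\Delta_0(\lambda)=0$ into a polynomial one. Setting $w:=e^{ib\lambda}$ and using $b_1=-n_1b$, $b_2=n_2b$, the equation $\Delta_0(\lambda)=0$ becomes, after multiplying by $w^{n_1}$,
$p(w):=w^{n_1+n_2}+d\,w^{n_1}-bc=0$, a polynomial of degree $n_1+n_2$ with $p(0)=-bc\ne 0$. Each nonzero root $w_*$ of $p$ produces the arithmetic progression $\{(-i\log w_*)/b+2\pi k/b\}_{k\in\bZ}$ of zeros of $\Delta_0$, and distinct roots yield progressions that are pairwise separated by a strictly positive constant (either differing in imaginary part when $|w_1|\ne|w_2|$, or shifted along the same horizontal line when $|w_1|=|w_2|$). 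Hence strict regularity holds iff every root of $p$ is simple. To extract the multiple-root condition I would combine $p(w)=0$ with $p'(w)=w^{n_1-1}[(n_1+n_2)w^{n_2}+d\,n_1]=0$, obtaining $w^{n_2}=-d\,n_1/(n_1+n_2)$ and $w^{n_1}=bc(n_1+n_2)/(d\,n_2)$, and, using $\gcd(n_1,n_2)=1$, eliminate $w$ via the identity $(w^{n_1})^{n_2}=(w^{n_2})^{n_1}$. A short rearrangement involving the factor $(-1)^{n_2}$ reproduces exactly~\eqref{eq:a=0.crit.rat}.

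For the necessity direction in part (i), I would argue by a normal-family/Hurwitz argument. Suppose the BC are not strictly regular; by Proposition~\ref{prop:sine.type} the zeros of $\Delta_0$ lie in a fixed strip $\Pi_h$, so one can pick pairs of distinct zeros $\lambda_n\ne\lambda_n'$ with $|\lambda_n|\to\infty$ and $\lambda_n-\lambda_n'\to 0$. The numbers $u_n:=e^{ib_2\lambda_n}$ and $v_n:=e^{ib_1\lambda_n}$ are bounded and bounded away from $0$, so along a subsequence $u_n\to u_*$, $v_n\to v_*$ with $u_*,v_*\ne 0$. The translated functions $f_n(\mu):=\Delta_0(\lambda_n+\mu)=d+u_ne^{ib_2\mu}-bc\,v_ne^{ib_1\mu}$ converge locally uniformly to $f_*(\mu):=d+u_*e^{ib_2\mu}-bc\,v_*e^{ib_1\mu}$, and each $f_n$ has the two zeros $0$ and $\lambda_n'-\lambda_n$ tending to $0$; Hurwitz's theorem forces $f_*(0)=f_*'(0)=0$. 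Solving the resulting linear pair gives uniquely $u_*=-\alpha d/(\alpha+1)$ and $v_*=d/((\alpha+1)bc)$, while the identity $|v_n|=|u_n|^{-\alpha}$ (inherited from $b_1=-\alpha b_2$) passes to $|v_*|=|u_*|^{-\alpha}$. Plugging in the explicit $u_*,v_*$ simplifies to $|d|^{\alpha+1}\alpha^\alpha=(\alpha+1)^{\alpha+1}|bc|$, i.e.\ $|d|=K:=(\alpha+1)(|bc|\alpha^{-\alpha})^{1/(\alpha+1)}$.

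For the converse in (i), the irrationality of $\alpha$ enters through Kronecker's theorem: the orbit $\{(b_2 x\bmod 2\pi,\ -\alpha b_2 x\bmod 2\pi):x\in\bR\}$ is dense in $[0,2\pi)^2$, which lets me construct sequences $\lambda_n^*=x_n+iy_0\to\infty$ along which $(u_n,v_n)$ converges to the prescribed $(u_*,v_*)$. The same normal-family argument then yields $f_n\to f_*$ with a double zero at $0$, and Hurwitz produces two distinct zeros of $\Delta_0$ accumulating at $\lambda_n^*$, so strict regularity fails. The main obstacle is the careful sign bookkeeping required to single out the specific value $d=-(\alpha+1)(|bc|\alpha^{-\alpha})^{1/(\alpha+1)}$ from the two candidates $d=\pm K$: one must match $\arg u_*,\arg v_*\in\{0,\pi\}$ (determined by the signs of $d$ and $bc$) against the feasible Kronecker orbits and check compatibility. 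A convenient sanity check is to pass to the limit in the polynomial criterion~\eqref{eq:a=0.crit.rat} along continued-fraction convergents $n_{1,k}/n_{2,k}\to\alpha$, where the parities of $n_{1,k}$ and $n_{2,k}$ along the relevant subsequence encode exactly the surviving sign.
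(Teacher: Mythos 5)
Your plan for part \textbf{(ii)} is correct and is in substance the paper's own argument: pass to the polynomial $p(w)=w^{n_1+n_2}+d\,w^{n_1}-bc$, note that asymptotic separation of the (finitely many) arithmetic progressions of zeros is equivalent to simplicity of the roots of $p$, and eliminate $w$ from $p(w)=p'(w)=0$ via $(w^{n_1})^{n_2}=(w^{n_2})^{n_1}$, using $\gcd(n_1,n_2)=1$ both for the elimination and for reconstructing $w$ from the values of $w^{n_1}$ and $w^{n_2}$; this reproduces~\eqref{eq:a=0.crit.rat}.

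The gap is in part \textbf{(i)}, and it sits exactly where you flag the ``sign bookkeeping'': that step is not bookkeeping but an obstruction your method cannot overcome. Your Hurwitz argument correctly forces $u_*=-\alpha d/(\alpha+1)$, $v_*=d/((\alpha+1)bc)$, and the modulus relation $|v_*|=|u_*|^{-\alpha}$ then yields only $|d|=K:=(\alpha+1)\left(|bc|\,\alpha^{-\alpha}\right)^{1/(\alpha+1)}$, i.e.\ $d\in\{K,-K\}$. To exclude $d=+K$ you propose to check which argument pairs $(\arg u_*,\arg v_*)\in\{0,\pi\}^2$ are ``feasible Kronecker orbits''. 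But for $\alpha\notin\bQ$ the closure of $\{(b_2 s\bmod 2\pi,\,b_1 s\bmod 2\pi):s\in\bR\}$ is the \emph{entire} square $[0,2\pi)^2$, so every argument pair is feasible; in particular the configuration belonging to $d=+K$ is attained along suitable sequences $\l_n^*\to\infty$ in the strip, and your own converse construction (normal families plus Hurwitz) then manufactures pairs of merging zeros for $d=+K$ exactly as it does for $d=-K$. Carried out consistently, your method therefore establishes the symmetric criterion ``strictly regular iff $|d|\ne K$'' and cannot single out the value $-K$; no constraint available in this framework distinguishes the two signs. Your proposed sanity check via part (ii) points the same way: for $n_1,n_2$ both odd (e.g.\ $n_1=n_2=1$, $bc=-1$) condition~\eqref{eq:a=0.crit.rat} fails at \emph{both} $d=\pm K$, whereas for $n_1+n_2$ odd it fails only at $d=-K$, and for irrational $\alpha$ both parity classes of near-coincidences $\alpha n\approx m$, $m\equiv n\pmod 2$, occur infinitely often. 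The paper's proof extracts the single sign from the limit $f(x_n)\to\alpha^{1/(\alpha+1)}(\alpha+1)/\alpha$, but in that computation the ratio $\sin((\alpha+1)\pi x_n)/\sin(\alpha\pi x_n)$ carries a factor $(-1)^{m_n}$ from $\sin(\alpha\pi x_n)=(-1)^{m_n}\sin(\pi\alpha(\delta_{1n}+\eps_n))$ which is not accounted for; so before trying to force your argument to produce $d=-K$ alone, you should resolve this discrepancy. As written, your plan does not close for the statement in its one-sided form.
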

%
%
\begin{proof}
\textbf{(i)} It follows from~\eqref{eq:Delta0.new} that
\begin{equation} \label{eq:Delta0.bc=1}
    \Delta_0(\l) = d - bc \cdot e^{i b_1 \l} + e^{i b_2 \l}.
\end{equation}
Let $b_2 \l =: \pi x + i y$, $x, y \in \bR$. Then $e^{i b_1 \l}
= e^{-i \alpha \pi x + \alpha y}$, $e^{i b_2 \l} = e^{i \pi x -
y}$. Since $bc, d \in \bR$, equation $\Delta_0(\l) = 0$ is
equivalent to the system
\begin{equation} \label{eq:e-y.system}
\begin{cases}
    e^{-y} \cos \pi x &= bc \cdot e^{\alpha y} \cos \alpha \pi x - d, \\
    e^{-y} \sin \pi x &= bc \cdot e^{\alpha y} \sin \alpha \pi x.
\end{cases}
\end{equation}
From the second equation in~\eqref{eq:e-y.system} we have
\begin{equation} \label{eq:y=frac}
    y = - \frac{\ln\left(\frac{-bc \cdot \sin \alpha \pi x}{\sin \pi x}\right)}{\alpha+1}, \qquad
    \frac{bc \cdot \sin \alpha \pi x}{\sin \pi x} < 0.
\end{equation}
Substituting it to the first equation we get
\begin{equation}
    \left(\frac{-bc \cdot \sin \alpha \pi x}{\sin \pi x}\right)^{\frac{1}{\alpha+1}} \cos \pi x =
    bc \left(\frac{-bc \cdot \sin \alpha \pi x}{\sin \pi x}\right)^{\frac{-\alpha}{\alpha+1}} \cos \alpha \pi x - d,
\end{equation}
which is equivalent to
\begin{equation} \label{eq:a=0.fx.def}
    \left(\frac{-bc \cdot \sin \alpha \pi x}{\sin \pi x}\right)^{\frac{1}{\alpha+1}}
    \frac{\sin (\alpha+1) \pi x}{\sin \alpha \pi x} = -d.
\end{equation}
For simplicity we assume that $bc=-1$ and $(0<)\alpha<1$. Then
summarizing all previous formulas we see that $\l$ is zero of
$\Delta_0(\cdot)$ if and only if
\begin{equation} \label{eq:a=0.final.system}
\begin{cases}
    b_2 \l = \pi x + i y, \quad x, y \in \bR, \\
    \frac{\sin \alpha \pi x}{\sin \pi x} > 0, \\
    y = - \frac{\ln\left(\frac{\sin \alpha \pi x}{\sin \pi x}\right)}{\alpha+1}, \\
    f(x) := \left(\frac{\sin \alpha \pi x}{\sin \pi x}\right)^{\frac{1}{\alpha+1}}
    \frac{\sin (\alpha+1) \pi x}{\sin \alpha \pi x} = -d.
\end{cases}
\end{equation}
Note that the second relation in~\eqref{eq:a=0.final.system} is
equivalent to
\begin{align} \label{eq:a=0.x.in...}
    x \in & A:= \left(\left(\bigcup_{n \in \bZ} (2n, 2n+1)\right) \bigcap
                \left(\bigcup_{n \in \bZ} \left(\frac{2n}{\alpha}, \frac{2n+1}{\alpha}\right)\right)\right) \nonumber \\
    &\bigcup \left(\left(\bigcup_{n \in \bZ} (2n-1, 2n)\right) \bigcap
                \left(\bigcup_{n \in \bZ} \left(\frac{2n-1}{\alpha}, \frac{2n}{\alpha}\right)\right)\right).
\end{align}
Let us describe the set $A$ in a more explicit way. Let $n \in
\bZ$ be fixed and let's find intersection $A \cap (n, n+1)$.
Since $\alpha < 1$, then at most two intervals of the form
$\bigl(m/\alpha, (m+1)/\alpha\bigl)$, $m \in \bZ$, intersect
with $(n,n+1)$. There are two cases possible. First, for some $m
\in \bZ$ interval $\bigl(m/\alpha, (m+1)/\alpha\bigr)$ fully
covers interval $(n,n+1)$ (i.e. $m/\alpha < n < n+1 <
(m+1)/\alpha$). In this case, if $n$ and $m$ are of the same
parity then $(n,n+1) \in A$, otherwise $A \cap (n,n+1) =
\varnothing$. The second case, is when for some $m \in \bZ$ we
have $n < m/\alpha < n+1$. In this case it is clear that exactly
one of the intervals $(n, m/\alpha)$ and $(m/\alpha, n+1)$
belongs to $A$ depending on parity of $m-n$. Thus, the domain of
the 4th equation in~\eqref{eq:a=0.final.system} is the union
$\cup_{n\in\bZ}I_n$, where $I_n$ is possibly empty subinterval
of $(n,n+1)$.

Put $r(x) := \sin \alpha \pi x / \sin \pi x$. Straightforward
calculation shows that for $x \in I_n$, $n \in \bZ$, we have
\begin{align} \label{e}
    f'(x) &= \frac{r^{\frac{1}{\alpha+1}}(x)}{(\alpha+1) \sin \alpha \pi x}
    \left((\alpha+1)^2 \cos(\alpha+1) \pi x -
    (\cot \pi x + \alpha^2 \cot \alpha \pi x) \sin (\alpha+1) \pi x \right) \nonumber \\
    &= \frac{-r^{\frac{1}{\alpha+1}}(x)}{(\alpha+1) \sin \alpha \pi x}
    \left((\alpha+1)^2 \sin \pi x \sin \alpha \pi x +
    \left(\sqrt{r(x)} \cos \pi x + \frac{\alpha \cos \alpha \pi x}{\sqrt{r(x)}}\right)^2\right).
\end{align}
Since $\sin \pi x$ and $\sin \alpha \pi x$ are of the same sign
on each interval $I_n$ it is clear that $f'(x)$ has fixed sign
on $I_n$. Hence, $f(x)$ is strictly monotonic on $I_n$. In
particular, the equation $f(x) = -d$, $x \in I_n$, has at most
one solution. Denote it by $x_n$ if it exists and let $y_n$ be
the corresponding value of $y$ from the third equation
in~\eqref{eq:a=0.final.system}. Clearly, all $x_n$ are different
and hence all zeros of $\Delta_0(\cdot)$ are simple.

Now let~\eqref{eq:a=0.crit} be satisfied and assume that
boundary conditions are not strictly regular. Since zeros of
$\Delta_0(\cdot)$ are simple it means that there exists infinite
set $S \subset \bN$ such that $x_{n-1}, x_n$ exist for $n \in S$
and
\begin{equation} \label{eq:a=0.xn.yn.diff.to.0}
    x_n - x_{n-1} \to 0, \quad y_n - y_{n-1} \to 0
    \quad\text{as}\quad n \to \infty, \quad n \in S.
\end{equation}
Since zeros of $\Delta_0(\cdot)$ lie in the strip it follows
that $|y_n| \le H$, $n \in S$, with some $H > 0$. Hence, the
third relation in~\eqref{eq:a=0.final.system} implies that
\begin{equation} \label{eq:C1<sin/sin<C2}
    0 < C_1 < t_n := \frac{\sin \alpha \pi x_n}{\sin \pi x_n} < C_2, \quad n \in S,
\end{equation}
with some $C_1, C_2 > 0$. Since $t_n = e^{-(\alpha+1)y_n}$, it
is clear that $y_n - y_{n-1} \to 0$ is equivalent to $t_n -
t_{n-1} \to 0$.

Taking into account the form of the set $A$ described after the
formula~\eqref{eq:a=0.x.in...} we see that there exists unique
$m = m_n \in \bZ$ such that either $x_{n-1} < m/\alpha < n <
x_n$ or $x_{n-1} < n < m/\alpha < x_n$. Moreover, $m_n$ has the
same parity as $n$. Let $S_1 (S_2)$ be the set of those $n$ in
$S$ for which the first (the second) inequality is satisfied.
Since $S$ is infinite and $S = S_1 \cup S_2$, either $S_1$ or
$S_2$ is infinite. First consider the case when $S_1$ is
infinite. For $n \in S_1$ we put
\begin{equation} \label{eq:a=0.delta.eps.n}
    \delta_{0n} := m/\alpha - x_{n-1}, \quad \eps_n = n - m/\alpha,
    \quad \delta_{1n} := x_n - n.
\end{equation}
Since $x_{n-1} < m/\alpha < n < x_n$, then $\eps_n, \delta_{0n},
\delta_{1n} > 0$. Further, since $x_n - x_{n-1} \to 0$ as $n \to
\infty$, $n \in S_1$, then $\eps_n \to 0$, $\delta_{0n} \to 0$,
$\delta_{1n} \to 0$ as $n \to \infty$, $n \in S_1$. Hence, for
large $n \in S_1$ we have taking into account that $m$ and $n$
are of the same parity
\begin{align}
    t_n &= \frac{\sin \alpha \pi x_n}{\sin \pi x_n}
    = \frac{\sin \pi (m + \alpha(\delta_{1n} + \eps_n))}{\sin \pi (n + \delta_{1n})}
    = \frac{\sin \pi \alpha(\delta_{1n} + \eps_n)}{\sin \pi \delta_{1n}}
    > \alpha, \label{eq:a=0.tn} \\
    t_{n-1} &= \frac{\sin \alpha \pi x_{n-1}}{\sin \pi x_{n-1}}
    = \frac{\sin \pi (m - \alpha \delta_{0n})}{\sin \pi (n - \delta_{0n} - \eps_n)}
    = \frac{\sin \pi \alpha \delta_{0n}}{\sin \pi (\delta_{0n} + \eps_n)}
    < \alpha, \label{eq:a=0.tn-1}
\end{align}
Here we used the inequality $\sin \alpha u > \alpha \sin v$ for
$0 < \alpha < 1$ and $0 < v < u < \pi$.

Since $t_n - t_{n-1} \to 0$, it follows from~\eqref{eq:a=0.tn}
and~\eqref{eq:a=0.tn-1} that $t_n \to \alpha$, $t_{n-1} \to
\alpha$ as $n \to \infty$, $n \in S_1$. This implies that
$\eps_n / \delta_{1n} \to 0$ as $n \to \infty$, $n \in S_1$.
Indeed, since $\eps_n \to 0$ and $\delta_{1n} \to 0$ as $n \to
\infty$, $n \in S_1$, then
\begin{equation}
    \alpha = \lim_{n \to \infty \atop{n \in S_1}} t_n
    = \lim_{n \to \infty \atop{n \in S_1}}
    \frac{\sin \pi \alpha(\delta_{1n} + \eps_n)}{\sin \pi \delta_{1n}}
    = \lim_{n \to \infty \atop{n \in S_1}}
    \frac{\alpha(\delta_{1n} + \eps_n)}{\delta_{1n}}
    = \alpha + \alpha \lim_{n \to \infty \atop{n \in S_1}} \frac{\eps_n}{\delta_{1n}}.
\end{equation}
Further, note that
\begin{equation}
    \frac{\sin (\alpha+1) \pi x_n}{\sin \alpha \pi x_n}
    = \frac{\sin \pi (n + \delta_{1n} + m + \alpha(\delta_{1n} + \eps_n))}{\sin \pi (m + \alpha(\delta_{1n} + \eps_n))}
    = \frac{\sin \pi ((\alpha+1)\delta_{1n} + \alpha \eps_n)}{\sin \pi (\alpha\delta_{1n} + \alpha\eps_n))}.
\end{equation}
Finally, taking into account the last relation and the fact that
$\eps_n/\delta_{1n} \to 0$ and $t_n \to \alpha$ as $n \to
\infty$, $n \in S_1$, we have
\begin{align}
    -d &= \lim_{n \to \infty \atop{n \in S_1}} f(x_n)
     = \lim_{n \to \infty \atop{n \in S_1}} t_n^{\frac{1}{\alpha+1}}
       \frac{\sin (\alpha+1) \pi x_n}{\sin \alpha \pi x_n}
     = \alpha^{\frac{1}{\alpha+1}} \lim_{n \to \infty \atop{n \in S_1}}
       \frac{\sin \pi ((\alpha+1)\delta_{1n} + \alpha \eps_n)}{\sin \pi (\alpha\delta_{1n} + \alpha\eps_n)} \nonumber \\
     &= \alpha^{\frac{1}{\alpha+1}} \lim_{n \to \infty \atop{n \in S_1}}
       \frac{\alpha+1 + \alpha \eps_n / \delta_{1n}}{\alpha + \alpha \eps_n/\delta_{1n}}
     = \alpha^{\frac{1}{\alpha+1}} \frac{\alpha+1}{\alpha}
     = (\alpha+1) \alpha^{-\frac{\alpha}{\alpha+1}}.
\end{align}
Since $bc=-1$ this contradicts to~\eqref{eq:a=0.crit}.
Therefore, zeros of $\Delta_0(\cdot)$ are asymptotically
separated. The case of infinite $S_2$ is considered similarly.

Now let's prove that opposite statement. As above, for
simplicity we assume that $bc=-1$ and $\alpha < 1$. We need to
prove that if $d = -(\alpha+1)
\alpha^{-\frac{\alpha}{\alpha+1}}$ then zeros of
$\Delta_0(\cdot)$ are not asymptotically separated. For $n \in
\bZ$ we set $m = m_n = \lfloor \alpha n \rfloor$. Let $S \subset
\bZ$ be some infinite set such that $m_n - n \to 0$ as $n \to
\infty$ and $m_n$ is of the same parity as $n$. Since $\alpha
\not\in \bQ$ it is clear that such set exists. Let us prove that
for large enough $n \in S$ the equation $f(x) = -d$ has zeros
$x_{n-1}, x_n$ such that $x_{n-1} < m/\alpha < n < x_n$ and $x_n
-x_{n-1} \to 0$ as $n \to \infty$. Put
\begin{equation}
    g(u,v) := \left(\frac{\sin \alpha u}{\sin v}\right)^{\frac{1}{\alpha+1}}
            \frac{\sin (\alpha u + v)}{\sin \alpha u}.
\end{equation}
Clearly
\begin{equation}
    f(x_{n-1}) = g\left(\pi \delta_{0n}, \pi (\delta_{0n} + \eps_n)\right), \quad\text{and}\quad
    f(x_n) = g\left(\pi (\delta_{1n} + \eps_n), \pi \delta_{1n}\right),
\end{equation}
where $\eps_n, \delta_{0n}, \delta_{1n}$ are defined
in~\eqref{eq:a=0.delta.eps.n}. Due to the special form of $d$,
equation $g(u,v) = -d$ is equivalent to
\begin{equation}
    f(u,v) := \left(\frac{\sin(\alpha u + v)}{\alpha+1}\right)^{\alpha+1} -
    \sin v \cdot \left(\frac{\sin \alpha u}{\alpha}\right)^{\alpha} = 0.
\end{equation}
It is easy to prove that
\begin{equation}
    f(u, 0) > 0, f(u, u) < 0, f(u, 2u) > 0, \quad 0 < u < 1/2.
\end{equation}
Hence for each $u \in (0, 1/2)$ there exists $v_+ \in (u, 2u)$
and $v_- \in (0,u)$ such that $f(u, v_{\pm})=0$. Clearly,
$v_{\pm} = v_{\pm}(u)$ is continuous at $u$. Hence for
sufficiently small $\eps > 0$ there exists $u^{\pm}_{\eps},
v^{\pm}_{\eps}$ such that $u^{\pm}_{\eps} - v^{\pm}_{\eps} = \pm
\eps$, $g(u^{\pm}_{\eps}, v^{\pm}_{\eps}) = -d$ and
$u^{\pm}_{\eps}, v^{\pm}_{\eps} \to 0$ as $\eps \to 0$. Applying
this fact for $\eps_n$ shows existence of needed $x_{n-1}$ and
$x_n$. It is easy to prove from $f(x_n) = f(x_{n-1}) = -d$ that
$t_n \to \alpha$ and $t_{n-1} \to \alpha$ as $n \to \infty$
where $t_n$, $t_{n-1}$ are defined
in~\eqref{eq:a=0.tn}--\eqref{eq:a=0.tn-1}. Which implies $y_n -
y_{n-1} \to \infty$ and shows that corresponding zeros of
$\Delta_0(\cdot)$ are not asymptotically separated.

\textbf{(ii)} Since $b_1/b_2 \in \bQ$ the set of zeros of
$\Delta_0(\cdot)$ is the union of finite number of arithmetic
progressions. Hence zeros are asymptotically separated if and
only if $\Delta_0(\cdot)$ does not have multiple zeros, which is
equivalent to the fact that $\Delta_0(\cdot)$ and
$\Delta_0'(\cdot)$ have no common zeros. When $a=0$ we have
$\Delta_0'(\l) = i b_2 e^{i b_2 \l} - i b_1 \cdot bc \cdot e^{i
b_1 \l}$. Hence zeros of $\Delta_0'(\cdot)$ can be found
explicitly. Substituting these values into $\Delta_0(\l)$ and
performing straightforward calculations we see that
$\Delta_0(\cdot)$ and $\Delta_0'(\cdot)$ have no common zeros if
and only if condition~\eqref{eq:a=0.crit.rat} is satisfied.
\end{proof}
%
%
\section{The Riesz basis property of root vectors system}
\label{sec:RieszBasis}
%
%
\subsection{Some auxiliary results}
%
%
Recall the following definition.
%
%
\begin{definition}
\textbf{(i)} Let $\frak H$ be a separable Hilbert space. The
vectors system $\{f_k\}_{k \in \bZ} \subset \frak H$ is called
\textbf{Besselian} in $\frak H$ if $\{(f, f_k)\}_{k \in \bZ} \in
l^2(\bZ)$, $f \in \frak H$.

\textbf{(ii)} The vectors system $\{f_k\}$ is called a
\textbf{Riesz basis} in $\frak H$ if it constitutes  a basis
equivalent to an orthonormal one, i.e. there exists a linear
homeomorphism $T$ in $\frak H$ for which $\{T f_k\}$ is an
orthonormal basis.

\textbf{(iii)} A sequence of subspaces
$\{\fH_k\}_{k=1}^{\infty}$ is called a \textbf{Riesz basis of
subspaces} in $\fH$ if there exists a complete sequence of
mutually orthogonal subspaces $\{\fH'_k\}_{k=1}^{\infty}$ and a
bounded operator $T$ in $\fH$ with bounded inverse such that
$\fH_k = T \fH'_k$, $k \in \bN$.

\textbf{(iv)} It is said that a  sequence
$\{f_k\}_{k=1}^{\infty}$ of vectors in $\fH$  forms  a
\textbf{Riesz basis with parentheses} if each its finite
subsequence is linearly independent, and there exists an
increasing sequence $\{n_k\}_{k=0}^{\infty} \subset \bN$ such
that $n_0=1$ and the sequence $\fH_k :=
\Span\{f_j\}_{j=n_{k-1}}^{n_k-1}$  constitutes a Riesz basis of
subspaces in $\fH$. Subspaces $\fH_k$ are called blocks.
\end{definition}
%
%
Our investigation of the Riesz basis property of the root
vectors system of the operator $L(Q)$ is heavily relied on the
following well-known Bari criterion.
%
%
\begin{theorem} \label{th:Bari.crit}\cite[Theorem\ VI.2.1]{GohKre65}
Let $\frak H$ be a separable Hilbert space. The vectors system
$\{f_k\}_{k \in \bZ} \subset \frak H$ forms a Riesz basis in
$\frak H$ if and only if it is complete, minimal and Besselian
in $\frak H$, and the corresponding biorthogonal system
$\{g_k\}_{k \in \bZ}$ is also complete and Besselian.
\end{theorem}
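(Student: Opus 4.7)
The plan is to establish both directions of the Bari criterion by constructing an explicit linear homeomorphism $T$ on $\fH$ that carries an orthonormal basis to $\{f_k\}_{k \in \bZ}$.

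For necessity, I would assume $\{f_k\} = \{Te_k\}$ for some orthonormal basis $\{e_k\}$ and homeomorphism $T$, and then read off the biorthogonal system from the defining relation $(Te_j, g_k) = \delta_{jk}$, which rewrites as $T^*g_k = e_k$ and gives $g_k = (T^*)^{-1}e_k$. Thus $\{g_k\}$ is itself the image of an orthonormal basis under a homeomorphism, hence is also a Riesz basis. Completeness and minimality of both systems are then transported by the homeomorphisms $T$ and $(T^*)^{-1}$, and the Bessel bounds follow from the identity $\sum_k |(h, Te_k)|^2 = \|T^*h\|^2 \le \|T^*\|^2\|h\|^2$ together with its analogue for $\{g_k\}$.

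For the sufficiency direction the main step is the construction of $T$. The Besselian hypothesis for $\{f_k\}$ says that the analysis operator $V_f \colon h \mapsto \{(h,f_k)\}_{k \in \bZ}$ is a bounded map from $\fH$ into $\ell^2(\bZ)$; its Hilbert-space adjoint is the synthesis operator $V_f^* \colon \{c_k\} \mapsto \sum_k c_k f_k$, which is automatically bounded on $\ell^2(\bZ)$, so in particular the series converges unconditionally. Fixing an auxiliary orthonormal basis $\{e_k\}$ of $\fH$, I define $T \in \cB(\fH)$ as the continuous extension of the prescription $Te_k := f_k$ (equivalently, $T = V_f^* W$, where $W \colon h \mapsto \{(h,e_k)\}$ is the canonical unitary). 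In the same way I construct a bounded operator $S$ from $\{g_k\}$ with $Se_k = g_k$, using the Besselian hypothesis on $\{g_k\}$.

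Biorthogonality then yields $(S^*Te_k, e_j) = (Te_k, Se_j) = (f_k, g_j) = \delta_{kj}$, so $S^*T = I$ on the basis $\{e_k\}$ and hence on all of $\fH$. This forces $\|Th\| \ge \|h\|/\|S\|$ for every $h \in \fH$, so $T$ is injective with closed range; since that range contains $\{Te_k\} = \{f_k\}$ whose closed linear span is $\fH$ by completeness, it equals $\fH$. The open mapping theorem then makes $T^{-1}$ bounded, and $\{f_k\} = \{Te_k\}$ is a Riesz basis. The single nontrivial pivot of the argument is the duality between $V_f$ and $V_f^*$, which converts the quadratic Bessel hypothesis into a bounded linear operator with prescribed action on an orthonormal basis; after that step the identity $S^*T = I$ from biorthogonality gives invertibility cleanly, and the completeness hypothesis on $\{g_k\}$ is in fact redundant in the sufficiency direction, being verifiable a posteriori from $\{g_k\} = \{(T^*)^{-1}e_k\}$.
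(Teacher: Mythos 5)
The paper does not prove this statement at all: it is quoted verbatim from Gohberg--Krein \cite[Theorem VI.2.1]{GohKre65} and used as a black box, so there is no internal proof to compare yours against. Judged on its own, your argument is correct and is essentially the standard synthesis/analysis-operator proof of the Bari criterion. The necessity direction (reading off $g_k=(T^*)^{-1}e_k$ from biorthogonality and transporting completeness, minimality and the Bessel bound through $T$ and $(T^*)^{-1}$) is fine, and in the sufficiency direction the chain $S^*T=I$ on an orthonormal basis $\Rightarrow$ $T$ bounded below $\Rightarrow$ closed range containing the closed span of $\{f_k\}=\fH$ $\Rightarrow$ $T$ invertible is exactly right. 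One point you pass over too quickly: with the paper's definition, ``Besselian'' only says that $\{(h,f_k)\}_{k\in\bZ}\in l^2(\bZ)$ for each $h\in\fH$; the boundedness of the analysis operator $V_f:\fH\to l^2(\bZ)$ is not part of the hypothesis but follows from the closed graph theorem (if $h_n\to h$ and $V_fh_n\to c$ in $l^2(\bZ)$, then $c_k=(h,f_k)$ for every $k$), and that one line should be said explicitly since it is the hinge on which the whole construction of $T$ and $S$ turns. Your closing observation that completeness of $\{g_k\}$ is not needed for sufficiency (it comes out a posteriori as $g_k=(T^*)^{-1}e_k$) is also correct and is a known sharpening of the classical statement; it does not affect the way the theorem is applied in this paper, where both systems are verified to be complete anyway.
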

%
%
It is well-known that the root vectors system of the operator
$L(Q)$ after proper normalization is biorthogonal to the root
vectors system of the adjoint operator $L(Q)^*$. In this
connection we give the explicit form of the operator $L(Q)^*$ in
the case of boundary conditions~\eqref{eq:BC.new}.
%
%
\begin{lemma} \label{lem:adjoint}
Let $L(Q)$ be an operator corresponding to the
problem~\eqref{eq:system},~\eqref{eq:BC.new}. Then the adjoint
operator $L^* := L(Q,{\widehat U}_1, {\widehat U}_2)^*$ is  $L^*
= L(Q^*, U_{*1}, U_{*2}))$, i.e. it is given by the differential
expression~\eqref{eq:system} with
$Q^*(x) = \begin{pmatrix} 0 & \overline{Q_{21}(x)} \\
\overline{Q_{12}(x)} & 0 \end{pmatrix}$ instead of $Q$ and the
boundary conditions
\begin{equation} \label{eq:BC*}
\begin{cases}
   U_{*1}(y) = k \overline{b} y_1(0) + y_2(0) + \overline{d} y_2(1) &= 0, \\
   U_{*2}(y) = \overline{a} y_1(0) + y_1(1) + k^{-1} \overline{c} y_2(1) &= 0,
\end{cases}
\end{equation}
where $k := - b_2 b_1^{-1}$. Moreover, boundary
conditions~\eqref{eq:BC*} are regular (strictly regular)
simultaneously with  boundary conditions~\eqref{eq:BC.new}.
\end{lemma}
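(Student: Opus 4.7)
The plan is to derive $L^{*}$ via a Green-type identity and then read off both regularity statements directly from the explicit form of the adjoint boundary conditions.

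First I would establish the Lagrange identity. Integrating by parts in $(Ly,z)_{L^{2}}$ and using that $B=\diag(b_{1},b_{2})$ is real self-adjoint while the matrix adjoint of $Q=\codiag(Q_{12},Q_{21})$ is $Q^{*}=\codiag(\overline{Q_{21}},\overline{Q_{12}})$ yields
\begin{equation*}
(Ly,z)-(y,\ell^{*}z)=i\sum_{j=1}^{2}b_{j}^{-1}\bigl(y_{j}(0)\overline{z_{j}(0)}-y_{j}(1)\overline{z_{j}(1)}\bigr)=:\Omega(y,z),
\end{equation*}
where $\ell^{*}z=-iB^{-1}z'+Q^{*}(x)z$. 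By the standard abstract characterisation, $\dom(L^{*})$ consists of those $z\in\dom(L_{\max})$ for which $\Omega(y,z)=0$ whenever $\widehat U_{1}(y)=\widehat U_{2}(y)=0$.

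Next I would parametrise the two-dimensional subspace $\{(y_{1}(0),y_{2}(0),y_{1}(1),y_{2}(1))\in\bC^{4}:\widehat U_{1}(y)=\widehat U_{2}(y)=0\}$ by the free parameters $y_{2}(0)$ and $y_{1}(1)$, using $\widehat U_{1},\widehat U_{2}$ to solve for $y_{1}(0)$ and $y_{2}(1)$. Substituting this parametrisation into $\Omega(\cdot,z)=0$ and equating to zero the coefficients of the two free parameters gives two linear equations on the boundary values of $z$. Multiplying by $b_{2}$ and $-b_{1}$ to normalise the coefficients of $z_{2}(0)$ and $z_{1}(1)$ to unity, introducing $k=-b_{2}/b_{1}$, and taking complex conjugates, these become precisely $U_{*1}(z)=U_{*2}(z)=0$ as in~\eqref{eq:BC*}. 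A dimension count (the $\Omega$-annihilator of a two-dimensional subspace of $\bC^{4}$ is itself two-dimensional) confirms that no further conditions appear, so $L^{*}=L(Q^{*},U_{*1},U_{*2})$.

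For the regularity assertions I would compute the four minors of the $2\times 4$ coefficient matrix of $U_{*1},U_{*2}$ directly, obtaining $J_{12}^{*}=-\overline{a}$, $J_{34}^{*}=-\overline{d}$, $J_{32}^{*}=-1$ and $J_{14}^{*}=-\overline{ad-bc}$. Since $J_{14}^{*}J_{32}^{*}=\overline{ad-bc}$, regularity is preserved. Substituting these minors into~\eqref{eq:Delta0} and comparing with the formula for $\Delta_{0}(\l)$ yields the clean identity
\begin{equation*}
\Delta_{0}^{*}(\l)=-e^{i(b_{1}+b_{2})\l}\,\overline{\Delta_{0}(\overline{\l})}.
\end{equation*}
Hence the zero set of $\Delta_{0}^{*}$ is $\{\overline{\l_{n}^{0}}\}_{n\in\bZ}$, and asymptotic separation is invariant under complex conjugation, so strict regularity is likewise preserved. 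The only delicate point is the bookkeeping in the derivation of $\Omega$: one must carefully track the scalar factors of $i$, $b_{1}^{-1}$, $b_{2}^{-1}$ when projecting $\Omega(\cdot,z)=0$ onto the two basis vectors of the boundary subspace in order to recover the precise coefficient $k=-b_{2}/b_{1}$; once this is done, the remaining minor computation and the identity for $\Delta_{0}^{*}$ reduce to direct algebra.
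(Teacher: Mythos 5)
The paper states Lemma~\ref{lem:adjoint} without giving a proof, so there is no argument of the authors' to compare against; your proposal supplies the standard Green's-identity derivation and it is correct. I checked the details: the boundary form is $\Omega(y,z)=i\sum_j b_j^{-1}\bigl(y_j(0)\overline{z_j(0)}-y_j(1)\overline{z_j(1)}\bigr)$ as you state (using that $B$ is real diagonal); parametrising the boundary subspace by $y_2(0)$ and $y_1(1)$, annihilating the two coefficients, rescaling by $b_2$ and $-b_1$ and conjugating (note $k=-b_2/b_1\in\bR$) gives exactly $U_{*1},U_{*2}$ of~\eqref{eq:BC*}; the minors are $J_{12}^{*}=-\overline{a}$, $J_{34}^{*}=-\overline{d}$, $J_{32}^{*}=-1$, $J_{14}^{*}=\overline{bc}-\overline{ad}=-\overline{ad-bc}$, so $J_{14}^{*}J_{32}^{*}=\overline{ad-bc}=\overline{J_{14}J_{32}}$ and regularity is preserved; and the identity $\Delta_0^{*}(\l)=-e^{i(b_1+b_2)\l}\,\overline{\Delta_0(\overline{\l})}$ checks out term by term against~\eqref{eq:Delta0.new}, so the zero set of $\Delta_0^{*}$ is the conjugate of that of $\Delta_0$ and asymptotic separation is preserved. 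The only point you gloss over is the identification $\dom(L^{*})\subset\dom(L_{\max}(Q^{*}))$ (i.e.\ that the adjoint of the minimal operator is the maximal one, so that only boundary conditions remain to be determined); this is standard for first-order systems with summable coefficients and is an acceptable level of detail here.
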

%
%
%
The following lemma plays the key role in the proof of
Theorem~\ref{th:basis.strict}.
%
%
\begin{lemma} \label{lem:Phi.Bessel}
Let $Q \in L^1[0,1] \otimes \bC^{2 \times 2}$  and  let
$\Phi(\cdot,\l)$ and $\Psi(\cdot, \l)$ be the fundamental matrix
solutions of the equations~\eqref{eq:system} and~\eqref{3.15}
satisfying $\Phi(0,\l) = \Psi(0, \l) = I_2$, given by formulas
\eqref{3.2} and~\eqref{3.16}, respectively. Let also
$\Phi_j(\cdot,\l)$ and $\Psi_j(\cdot, \l)$, $j \in \{1,2\}$, be
the columns of these matrices $($cf.~\eqref{3.2} and
\eqref{3.16}$)$. Then for any incondensible sequence
$\{\mu_n\}_{n \in \bZ}$ the systems $\{\Phi_j(\cdot,\mu_n)\}_{n
\in \bZ}$ and $\{\Psi_j(\cdot, \overline{\mu_n})\}_{n \in \bZ}$
are Besselian in $L^2[0,1] \otimes \bC^2$, $j \in \{1,2\}$.
\end{lemma}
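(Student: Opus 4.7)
\medskip

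\noindent\textbf{Proof plan.} The plan is to use the integral representations \eqref{eq:phi11}--\eqref{eq:phi22} from Proposition \ref{prop:phi.jk=e+int} to reduce the Bessel property of the vector system $\{\Phi_j(\cdot,\mu_n)\}_{n\in\bZ}$ to the Bessel property of scalar exponential systems $\{e^{i b_k \mu_n t}\}_{n\in\bZ}$ in $L^2[0,1]$, and then to invoke a Plancherel--Polya type estimate for sequences lying in a horizontal strip with uniformly bounded local density.

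Fix $f=\col(f_1,f_2)\in L^2[0,1]\otimes\bC^2$ and consider, say, $j=1$. Inserting \eqref{eq:phi11} and \eqref{eq:phi21} into $(f,\Phi_1(\cdot,\mu_n))$ produces four summands of ``convolution'' type plus the leading term $\int_0^1 f_1(x)\,e^{-i b_1\overline{\mu_n}x}dx$. For each summand of the form $\int_0^1 f_k(x)\overline{\int_0^x R^{\pm}_{kl}(x,t)e^{i b_l\mu_n t}\,dt}\,dx$, I would apply Fubini (justified since $R^{\pm}_{kl}\in X_{1,0}\cap X_{\infty,0}$ and $f_k\in L^2\subset L^1$) to rewrite it as $\int_0^1 g_{kl}^{\pm}(t)\,e^{-i b_l\overline{\mu_n}t}\,dt$, where $g_{kl}^{\pm}(t):=\int_t^1 f_k(x)\,\overline{R^{\pm}_{kl}(x,t)}\,dx$. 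The assignment $f_k\mapsto g_{kl}^{\pm}$ is nothing but the $L^2$-adjoint of the Volterra operator with kernel $R^{\pm}_{kl}$, hence by Lemma \ref{lem_Volterra_Vector_oper}(i) (with $p=2$) it is bounded on $L^2[0,1]$, yielding $\|g_{kl}^{\pm}\|_{L^2}\le C\|f_k\|_{L^2}$.

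Thus the whole question reduces to showing that for every $h\in L^2[0,1]$ and every $b\in\{b_1,b_2\}$
\[
    \sum_{n\in\bZ}\left|\int_0^1 h(t)\,e^{-i b\overline{\mu_n}t}\,dt\right|^2<\infty.
\]
Here I would use that $\{\mu_n\}\subset\Pi_h$ is incondensable, hence so is the sequence $\nu_n:=-b\,\overline{\mu_n}$ in $\Pi_{|b|h}$, and an incondensable sequence in a strip is a finite union of separated sequences in that strip (a standard geometric observation; it can also be deduced from Lemma \ref{lem:incondensable}). For any separated sequence $\{\l_n\}$ in a strip, the Plancherel--Polya inequality applied to the entire function $F(\l)=\int_0^1 h(t)e^{i\l t}dt$ of exponential type gives $\sum_n|F(\l_n)|^2\le C\|h\|_{L^2}^2$; summing over the finitely many separated subsequences delivers the required estimate.

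The case $j=2$ is identical, and the assertion for $\{\Psi_j(\cdot,\overline{\mu_n})\}$ follows by applying the same argument to the adjoint equation \eqref{3.15}, whose solutions admit analogous integral representations obtained from Proposition \ref{prop:phi.jk=e+int} with $Q$ replaced by $Q^*$. I expect the only nontrivial step to be the Plancherel--Polya/Bessel estimate for exponentials with frequencies forming an incondensable set in a strip; everything else is bookkeeping made available by the $X_{1,0}\cap X_{\infty,0}$ regularity of the kernels $R^{\pm}_{jk}$ obtained in Section \ref{sec:Transform}.
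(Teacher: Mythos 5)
Your proposal is correct and follows essentially the same route as the paper: reduce $(f,\Phi_j(\cdot,\mu_n))$ via the representations \eqref{eq:phi11}--\eqref{eq:phi22} and Fubini to inner products of $L^2$-functions (images of $f$ under the bounded adjoint Volterra operators with kernels $R^{\pm}_{jk}$) against the exponentials $e^{ib_k\mu_n t}$, then invoke the Bessel property of exponential systems with incondensable frequencies in a strip. The only difference is that the paper simply cites~\cite[Lemma 2]{Katsn71} for that last fact, whereas you sketch its proof (splitting an incondensable sequence into finitely many separated ones and applying Plancherel--Polya), which is a correct and self-contained substitute.
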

%
%
\begin{proof}
Consider the case of the system $\{\Phi_1(\cdot,\mu_n)\}_{n \in
\bZ}$. Let $f := \col(f_1, f_2) \in L^2[0,1] \otimes \bC^2$.
Taking into account formulas~\eqref{eq:phi11},~\eqref{eq:phi21}
we get
\begin{align} \label{eq:f.Phi1}
    \left( f , \Phi_1(\cdot, \mu_n) \right)_{L^2[0,1] \otimes \bC^{2}}
    &= \int_0^1 \left(f_1(x) \ol{\varphi_{11}(x, \mu_n)}
     + f_2(x) \ol{\varphi_{21}(x, \mu_n)}\right) dx \nonumber \\
    &= \int_0^1 f_1(x) \ol{e^{i b_1 \mu_n x}} dx + \sum_{j,k=1}^2
       \int_0^1 f_j(x) \left(\int_0^x \ol{N_{jk}(x,t) e^{i b_k \mu_n t}}\,dt \right)\,dx,
\end{align}
where $N_{j1}(\cdot,\cdot) := R_{j1}^{+}(\cdot,\cdot), \
N_{j2}(\cdot,\cdot) := R_{j2}^{-}(\cdot,\cdot), \ j \in
\{1,2\}$. Further, note
\begin{equation} \label{eq:int.fj.Njk.e}
    \int_0^1 f_j(x) \left(\int_0^x \ol{N_{jk}(x,t) e^{i b_k \mu_n t}} dt\right)\,dx =
    \int_0^1 g_{jk}(t) \ol{e^{i b_k \mu_n t}}\,dt
\end{equation}
where
\begin{equation} \label{eq:gjk.in.L2}
    g_{jk}(t) := \int_t^1 \ol{N_{jk}(x,t)} f_j(x) dx, \quad j,k \in \{1,2\}.
\end{equation}

By Proposition~\ref{prop:phi.jk=e+int}, $N_{jk}(\cdot, \cdot)
\in X_1(\Omega) \cap X_{\infty}(\Omega)$. Therefore by
Lemma~\ref{lem_Volterra_operGeneral}  the Volterra type
operators
\begin{equation*}
    N_{jk} : f \to \int_0^x N_{jk}(x,t) f(t)\,dt  \quad\text{and} \quad
      N_{jk}^* : f \to \int_t^1 \ol{N_{jk}(x,t)} f_j(x)\,dx
\end{equation*}
are bounded in $L^2[0,1]$, hence $g_{jk} \in L^2[0,1]$,\ $j,k
\in \{1,2\}$. Taking this inclusion into account and  inserting
expressions~\eqref{eq:int.fj.Njk.e} into~\eqref{eq:f.Phi1} one
rewrites this equality as
\begin{equation} \label{eq:f.Phi1.short}
    \left( f , \Phi_1(\cdot, \mu_n) \right)_{L^2[0,1] \otimes \bC^2} =
    \left(f, e^{i b_1 \mu_n x}\right)_{L^2[0,1]} +
    \sum_{j,k=1}^2 \left(g_{jk}, e^{i b_k \mu_n t} \right)_{L^2[0,1]}.
\end{equation}
Since the sequence $\{\mu_n\}_{n \in \bZ}$ is incondensible,
then by~\cite[Lemma 2]{Katsn71} the sequence of  exponents
$\{e^{i b_1 \mu_n x}\}_{n \in \bZ}$ is Besselian in $L^2[0,1]$.
The latter implies
$$
\{(g_{jk}, e^{i b_k \mu_n t})_{L^2[0,1]}\}_{n \in \bZ} \in l^2(\bZ) \quad
\text{and}\quad \{(f, e^{i b_k \mu_n t})_{L^2[0,1]}\}_{n \in \bZ} \in l^2(\bZ).
$$
Combining these inclusions with representation
~\eqref{eq:f.Phi1.short} shows that  the system $\{\Phi_1(\cdot,
\mu_n)\}_{n \in \bZ}$ is Besselian.
 The systems of functions  $\{\Phi_2(\cdot, \mu_n)\}_{n \in \bZ}$  and $\{\Psi_j(\cdot,
\ol{\mu_n})\}_{n \in \bZ}$ are treated  similarly.
\end{proof}
%
%
\subsection{Proof of the main result}
%
%
Now we are ready to prove the main result of the paper.
\begin{proof}[Proof of Theorem~\ref{th:basis.strict}]
According to Proposition~\ref{prop:Delta.regular.basic} and
Definition~\ref{def:strictly.regular}, the operator $L(Q)$ has
countably many eigenvalues $\{\l_n\}_{n \in \bZ}$. Moreover,
they are of finite multiplicity, asymptotically simple and
separated, and are located in the strip $\Pi_h = \{\l \in \bC :
|\Im\,\l| \leqslant h\}$.

Since boundary conditions are regular, one  can transform  them
to the form~\eqref{eq:BC.new}.

(i) In this step  assume that $|b| + |c| \ne 0$. Without loss of
generality it suffices to consider the case  $b \ne 0$. Let
$\frak F  = \{f_n\}_{n \in \bZ}$ and $\frak G = \{g_n\}_{n \in
\bZ}$ be the system of root vectors of the operators $L(Q) =
L(Q,{\widehat U}_1, {\widehat U}_2)$ and   $L(Q)^* = L(Q^*,
U_{*1}, U_{*2}))$, respectively.   By~\cite[Theorem
1.2]{MalOri12}, each of the systems $\frak F$ and $\frak G$ is
complete and minimal in $L^2[0,1] \otimes \bC^2$. Therefore
these systems can be chosen to be  biorthogonal to each other.

First we indicate the explicit form of the functions $f_n$ and
$g_n$ for $n$ large enough.

To this end one   easily gets  from~\eqref{3.2} and
\eqref{eq:BC.new} that
    \begin{align}
{\widehat U}_1(\Phi_1(\cdot, \l)) &  =\varphi_{11}(0,\l) + b\varphi_{21}(0,\lambda) +
a\varphi_{11}(1,\lambda) =
1+a\varphi_{11}(\lambda), \label{5.6}\\
{\widehat U}_1(\Phi_2(\cdot, \l)) & = \varphi_{12}(0,\lambda) + b\varphi_{22}(0,\lambda)
+ a\varphi_{12}(1,\lambda) = b + a\varphi_{12}(\lambda). \label{5.7}
    \end{align}
Since $\varphi_{12}(\l_n) = o(1)$ as $n\to\infty$
(see~\eqref{eq:phi.jkx}) and $b\not =0$, one gets that $b +
a\varphi_{12}(\lambda_n)\not =0$ for $|n|\ge n_1$ with some
$n_1\in \bN$.
Therefore one derives from~\eqref{5.6},~\eqref{5.7}, and
\eqref{3.2}  that  the vector-function
    \begin{align}\label{eq:fnx}
f_n(\cdot) := & \widehat U_1\left(\Phi_2(\cdot, \l_n)\right)\Phi_1(\cdot, \l_n) -
\widehat U_1\left(\Phi_1(\cdot, \l_n)\right) \Phi_2(\cdot, \l_n)  \nonumber \\
= & (b + a \varphi_{12}(\l_n)) \Phi_1(\cdot, \l_n)
    - (1 + a \varphi_{11}(\l_n)) \Phi_2(\cdot, \l_n), \quad |n|\ge n_1,
    \end{align}
is a non-trivial  eigenfunction  of the operator $L(Q)$
corresponding to the eigenvalue $\l_n.$
Since  boundary conditions~\eqref{eq:BC.new}  are strictly
regular, it follows from Proposition
\ref{prop:Delta.regular.basic}  that the sequence
$\Lambda=\{\l_n\}_{n\in \bZ}$  of the eigenvalues of $L(Q)$,
i.e. the zeros of $\Delta(\cdot)$, is asymptotically separated.
In particular, there exists $n_0\in \bN$ such that the
eigenvalues of $L(Q)$ are geometrically and algebraically
simple. Therefore $f_n$ is the unique up to a multiplicative
constant eigenfunction  of the operator $L(Q)$ corresponding to
$\l_n$  for $|n|\ge \max\{n_0,n_1\}$.

Similarly, one easily gets from~\eqref{eq:BC*}  that
     \begin{equation}
U_{*1}(\Psi_1(\cdot, \l)) = k\overline b + \overline d\psi_{21}(\l),  \qquad
U_{*1}(\Psi_2(\cdot, \l)) = 1  + \overline d\psi_{22}(\lambda).
     \end{equation}
Moreover, in accordance with~\eqref{eq:phi.jkx} there exists
$n_2\in \bN$ such that $k\overline b + \overline
d\psi_{21}(\l_n)\not =0$ for $|n| \ge n_2$. Therefore
Lemma~\ref{lem:adjoint} ensures that the vector-function
\begin{align} \label{eq:gnx}
    g_n(\cdot) &:= U_{*1}\bigl(\Psi_2(\cdot,\overline{\lambda}_n)\bigr)\Psi_1(\cdot,\overline{\lambda}_n) -
                    U_{*1}\bigl(\Psi_1(\cdot,\overline{\lambda}_n)\bigr)\Psi_2(\cdot,\overline{\lambda}_n)  \nonumber  \\
         &= (1 + \overline{d} \psi_{22}(\overline{\l_n})) \Psi_1(\cdot, \overline{\l_n})
          - (k \overline{b} + \overline{d} \psi_{21}(\overline{\l_n})) \Psi_2(\cdot, \overline{\l_n}),\quad |n|\ge n_2,
\end{align}
is a non-trivial  eigenfunction  of the operator $L(Q)^*$
corresponding to the eigenvalue $\overline{\l_n}.$   Since the
eigenvalues of $L(Q)^*$ constitutes  a sequence
$\{\overline{\l_n}\}_{n\in \bZ}$, they are geometrically and
algebraically simple simultaneously with $\{{\l_n}\}_{n\in\bZ}$,
i.e.  for $|n|\ge n_0$. Therefore $g_n$ is the unique up to a
multiplicative constant eigenfunction  of the operator $L(Q)^*$
corresponding to $\overline{\l_n}$  for $|n|\ge
\max\{n_0,n_2\}$.

Further,  it follows from~\eqref{eq:phi.jkx}  and~\eqref{3.18}
that
  \begin{align}
\|f_n\|^2 = |b + a o(1)|^2\cdot \|\Phi_1(\cdot, \l_n)\|^2 + |1 + ae^{ib_1\l_n} +
o(1)|^2 \cdot \|\Phi_2(\cdot, \l_n)\|^2  \nonumber \\
- 2\Re (b + a o(1)) (1 + ae^{ib_1\l_n} + o(1)) (\Phi_1(\cdot, \l_n), \Phi_2(\cdot,
\l_n))  \nonumber \\
= |b|^2\cdot \|\Phi_1(\cdot, \l_n)\|^2 + |1 + ae^{ib_1\l_n}|^2 \|\Phi_2(\cdot, \l_n)\|^2
+ o(1) \quad \text{as} \quad  n\to \infty.
   \end{align}
Similar relation is valid for $g_n$. Combining these relations
with  estimates \eqref{eq:Phij.Phij}  and noting that $\l_n\in
\Pi_h$, $n\in \bN,$ yields
  \begin{equation} \label{eq:phin,phin}
\|f_n\|   \asymp 1,  \qquad \|g_n\|  \asymp 1,
    \qquad\text{for large}\ \ n.
  \end{equation}

Here the symbol  $a_n \asymp 1$ for large $n$ means that there
exists $n_0 \in \bN$ and $C_1, C_2
> 0$ such that $C_1 < |a_n| < C_2$, $|n| > n_0$.
In particular, vector-functions $f_n$, $g_n$ are non-zero for
large $n$.

Similarly starting with~\eqref{eq:fnx},~\eqref{eq:gnx}, and
using relations~\eqref{eq:phi.jkx},~\eqref{3.17}, and noting
that $k \in \mathbb R,$ one arrives at the following important
asymptotic relation
   \begin{align} \label{eq:phin,psin}
\left(f_n, g_n\right)  =\bigl(b + a\cdot o(1)\bigr)\bigl(1 + d e^{-i b_2\lambda_n}
+ o(1)\bigr)(\Phi_1,\Psi_1) \nonumber \\
 +  \left(1+a e^{i b_1\lambda_n} + o(1)\right) \bigl(\overline k b + d\cdot
  o(1)\bigr)(\Phi_2,\Psi_2)  \nonumber  \\
 =  b (d e^{-i b_2 \l_n} + k a e^{i b_1 \l_n} + k + 1) + o(1)\quad \text{as}
  \quad n \to \infty.
\end{align}

Further, using the formula~\eqref{eq:Delta0} for the
perturbation determinant $\Delta_0(\cdot)$ one easily  gets
\begin{align} \label{eq:dl(Delta0.e)}
    \frac{d}{d\l}\left(\Delta_0(\l) e^{-i b_1 \l}\right)
    &= \frac{d}{d\l}\left(d e^{-i b_1 \l} + a e^{i b_2 \l}
    + ad-bc + e^{i (b_2-b_1) \l}\right) \nonumber \\
    &= -i b_1 e^{i (b_2-b_1) \l} (d e^{-i b_2 \l}
    + k a e^{i b_1 \l} + k + 1).
\end{align}
Since  boundary conditions~\eqref{eq:BC} and~\eqref{eq:BC.new}
are equivalent, the latter are also  strictly regular. Therefore
in accordance with Definition~\ref{def:strictly.regular} the
sequence of zeros $\Lambda_0 := \{\l_n^0\}_{n \in \bZ}$ of the
determinant $\Delta_0(\cdot)$ is  asymptotically separated. In
other words, there exist $n_0\in \mathbb N$ and $\delta>0$ such
that the separation condition~\eqref{separ_cond} is satisfied
for $|j|, |k|\ge n_0$.

Combining  estimate~\eqref{eq:Delta0>=} with  the Minimum and
Maximum Principles for analytic functions (cf.~\cite[Lemma
22.2]{Lev96}), we arrive at the following two-sided estimate
\begin{equation} \label{eq:C1<Delta<C2}
    C_1 < |\Delta_0'(\l)| < C_2, \qquad \l \in \bD_{\delta}(\l_n^0), \quad |n| > n_0.
\end{equation}
with certain  $C_1 = C_1(\delta)>0, C_2 = C_2(\delta) >0$.  It
follows from asymptotic formula~\eqref{eq:l.n=l.n0+o(1)} that
$\l_n = \l_n^0 + o(1) \in \bD_{\delta}(\l_n^0)$ for $n$ large
enough. Combining this inclusion with
estimates~\eqref{eq:C1<Delta<C2} yields  $\Delta_0'(\l_n) \asymp
1$ for large $n$. Since $|\Im\,\l_n| \leqslant h$,
relations~\eqref{eq:phin,psin},~\eqref{eq:dl(Delta0.e)},
and~\eqref{eq:C1<Delta<C2} imply
\begin{equation} \label{eq:(phin,psin)=1}
    \left(f_n, g_n\right) \asymp 1 \quad\text{for large}\ \ n.
\end{equation}
Thus we can normalize the systems $\{f_n\}$  and $\{g_n\}$ by
putting
\begin{equation} \label{eq:fn,gn.def}
    {\widehat f}_n := \frac{f_n}{\|f_n\|}, \qquad
    {\widehat g}_n := \frac{\|f_n\| g_n}{(f_n, g_n)}, \qquad |n|\ge m :=\max\{n_0,n_1, n_2\}.
\end{equation}
Clearly, $\|{\widehat f}_n\|=1$ and $({\widehat f}_n, {\widehat
g}_n)=1$ for $|n| > m$, i.e. the  sequences
  \begin{equation}\label{5.18}
\mathcal F := \{{\widehat f}_n\}_{|n|>m} \quad \text{and} \quad \mathcal G :=
\{{\widehat g}_n\}_{|n|> m}
     \end{equation}
are biorthogonal. It follows from~\eqref{eq:fnx},~\eqref{eq:gnx}
and~\eqref{eq:fn,gn.def} that
\begin{align}
  \label{eq:fn}
       {\widehat f}_n(\cdot) &= \frac{b + a \varphi_{12}(\l_n)}{\|f_n\|} \Phi_1(\cdot, \l_n)
                - \frac{1 + a \varphi_{11}(\l_n)}{\|f_n\|} \Phi_2(\cdot, \l_n), \\
  \label{eq:gn}
        {\widehat g}_n(\cdot) &= \frac{\|f_n\|(1 + \overline{d} \psi_{22}(\overline{\l_n}))}
                       {(f_n, g_n)} \Psi_1(\cdot, \overline{\l_n})
                - \frac{\|f_n\|(k \overline{b} + \overline{d} \psi_{21}(\overline{\l_n}))}
                       {(f_n, g_n)} \Psi_2(\cdot, \overline{\l_n}).
\end{align}
By Proposition~\ref{prop:sine.type},  the sequence of
eigenvalues $\Lambda =\{\l_n\}_{n\in \Bbb Z}$ is condensible.
Therefore, by Lemma~\ref{lem:Phi.Bessel}, the sequences of
vector-functions $\{\Phi_j(\cdot,\l_n)\}_{|n|>m}$ and
$\{\Psi_j(\cdot, \overline{\l_n})\}_{|n|>m}$, $j \in \{1,2\}$,
are Besselian in $L^2[0,1] \otimes \bC^2$. It follows
from~\eqref{eq:phi.jkx},~\eqref{eq:phin,phin},
and~\eqref{eq:(phin,psin)=1} with account of the inclusion
$\Lambda \subset \Pi_h$, that the coefficients at $\Phi_j(\cdot,
\l_n)$ and $\Psi_j(\cdot, \overline{\l_n})$, $j \in \{1,2\}$,
in~\eqref{eq:fn}  and~\eqref{eq:gn} are bounded in  $n$. Hence,
the sequences $\mathcal F$ and  $\mathcal G$ given by
\eqref{5.18}  are also Besselian in $L^2[0,1] \otimes \bC^2$.

Clearly,  the systems  $\frak F$ and $\frak G$  contain
$\mathcal F$ and  $\mathcal G$, respectively. Since $\frak F$
differs from $\mathcal F$ by a finite number (in fact, $2m+1$)
entries  and $\mathcal F$ is Besselian in $L^2[0,1] \otimes
\bC^2$, the system $\frak F$ is Besselian too. Similarly $\frak
G$ is Besselian in $L^2[0,1] \otimes \bC^2$. The Riesz basis
property of the system $\frak F$ in $L^2[0,1] \otimes \bC^2$  is
now implied by the Bari criterion (Theorem~\ref{th:Bari.crit}).

(ii) Now assume that $b=c=0$. In this case $a d \ne 0$ and
$\Delta_0(\l) = (d + e^{i b_2 \l})(1 + a e^{i b_1 \l})$. Let
$\Lambda_1^0 = \{\l_{1,n}^{0}\}_{n \in \bZ}$ and $\Lambda_2^0 =
\{\l_{2,n}^{0}\}_{n \in \bZ}$ be the sequences of zeros of the
first and second factor, respectively. Clearly,  these sequences
constitute  arithmetic progressions  lying  on the lines
parallel to the real axis.
 Since the boundary conditions are strictly regular, by the assumption, the
arithmetic progressions $\Lambda_1^0$ and $\Lambda_2^0$ are
separated, i.e., $|\l_{1,n}^0 - \l_{2,m}^0|
> 2 \delta$, $m,n \in \bZ$ for some $\delta > 0$.  This implies the following asymptotic
relations
\begin{equation} \label{eq:1+ae=1,d+e=1}
    1 + a e^{i b_1 \l_{1,n}^0} \asymp 1, \quad
    d + e^{i b_2 \l_{2,n}^0} \asymp 1 \quad\text{for large}\  |n|.
\end{equation}
In accordance with Proposition~\ref{prop:Delta.regular.basic}
the sequence $\Lambda = \{\l_n\}_{n \in \bZ}$ of zeros of
$\Delta(\cdot)$ admits a decomposition $\Lambda = \Lambda_1 \cup
\Lambda_2$   where $\Lambda_j = \{\l_{j,n}\}_{n \in \bZ}$, $j
\in \{1,2\}$, and  $\l_{j,n} = \l_{j,n}^0 + o(1)$ as $n \to
\infty$. Combining this representation with
Proposition~\ref{prop:phi.jk.asymp} and~\eqref{eq:1+ae=1,d+e=1}
yields
\begin{equation} \label{eq.1+a.phi11=1,d+phi22=1}
    1 + a \varphi_{11}(\l_{1,n}) \asymp 1, \quad
    d + \varphi_{22}(\l_{2,n}) \asymp 1 \quad\text{for large}\ |n|.
\end{equation}
Similarly to~\eqref{eq.1+a.phi11=1,d+phi22=1} one derives
   \begin{equation} \label{eq.1,a+psi11=1}
\ol{a} + \psi_{11}(\ol{\l_{1,n}}) \asymp 1, \quad  (1 +
\overline{d}\psi_{22}(\overline{\l_{2,n}}))
    \asymp 1\quad  \text{for large}\quad |n|.
  \end{equation}
Combining this relation with~\eqref{eq.1+a.phi11=1,d+phi22=1}
yields the existence of $n_1'\in \bN$ such that
     \begin{equation} \label{Non-trivial_coeficients}
(1 + a \varphi_{11}(\l_{1,n}))(\ol{a} + \psi_{11}(\ol{\l_{1,n}})) \not = 0 \quad
\text{for}\quad |n| \ge n_1'.
    \end{equation}
Therefore  it follows from~\eqref{eq:fnx}  (with $b=0$ and
$a\not = 0$) that for $|n| \ge n_1'$ the vector-function
\begin{align}
    f_{1,n}(\cdot) & := a \varphi_{12}(\l_{1,n}) \Phi_1(\cdot, \l_{1,n})
    - (1 + a \varphi_{11}(\l_{1,n})) \Phi_2(\cdot, \l_{1,n}), \label{eq:phi2nx}
\end{align}
is the non-trivial eigenfunction  of the operator $L(Q)$
corresponding to the eigenvalue $\l_{1,n}$. Moreover, it follows
from the second equation in~\eqref{eq:BC*} and
\eqref{Non-trivial_coeficients} that  the vector-function
  \begin{align}
g_{1,n}(\cdot) & :=
U_{*2}\bigl(\Psi_2(\cdot,\overline{\l_{1,n}})\bigr)\Psi_1(\cdot,\overline{\l_{1,n}}) -
U_{*2}\bigl(\Psi_1(\cdot,\overline{\l_{1,n}})\bigr)\Psi_2(\cdot,\overline{\l_{1,n}})  \nonumber  \\
     & := \psi_{12}(\overline{\l_{1,n}}) \Psi_1(\cdot, \overline{\l_{1,n}})
    - (\overline{a} + \psi_{11}(\overline{\l_{1,n}})) \Psi_2(\cdot,
    \overline{\l_{1,n}}),\quad |n| \ge n_1',
    \label{eq:psi2nx}
\end{align}
is a  non-trivial eigenfunction  of the operator $L(Q)^*$
corresponding to the eigenvalue $\overline{\l_{1,n}}$.  Since
boundary conditions~\eqref{eq:BC.new} are strictly regular,
Proposition~\ref{prop:Delta.regular.basic} ensures existence of
$n_0'\in \bN$ such that the eigenvalues $\{\l_{1,n}\}$ of $L(Q)$
are geometrically and algebraically simple for $|n|\ge n_0'$.
Therefore $f_{1,n}(\cdot)$ ($g_{1,n}(\cdot)$) is the unique up
to a multiplicative constant eigenfunction of the operator
$L(Q)$ ($L(Q)^*$ ) corresponding to  the eigenvalue $\l_{1,n}$
($\overline{\l_{1,n}}$) for $|n|\ge \max\{n_0',n_1'\}$.

Further, combining~\eqref{eq.1+a.phi11=1,d+phi22=1} with
relation \eqref{eq.1,a+psi11=1} for large enough $|n|$  and
applying, Proposition~\ref{prop:phi.jk.asymp} and
Corollary~\ref{cor:Phi.Psi}, we arrive at the following
asymptotic relations
\begin{equation} \label{eq:phi2n,psi2n}
    \|f_{1,n}\| \asymp 1, \quad \|g_{1,n}\| \asymp 1,
    \quad \left(f_{1,n}, g_{1,n}\right) \asymp 1 \quad\text{for large}\ \ n,
\end{equation}
(cf.~\eqref{eq:phin,phin} and~\eqref{eq:phin,psin}).  Performing
normalization  of the sequences $\{f_{2,n}\}_{n \in \bZ}$ and
$\{g_{2,n}\}_{n \in \bZ}$ in the same way as for the
sequence~\eqref{eq:fn,gn.def} and repeating  the same argument
we get that their normalizations  are besselian biorthogonal
sequences.

Going over to  the second branch $\{\l_{2,n}\}_{n \in \bZ}$ of
eigenvalues we obtain from~\eqref{3.2} and~\eqref{eq:BC.new}
with account of the assumption $c=0$ that
   \begin{align}\label{5.10Second_eigenf-n}
f_{2,n}(\cdot) & :=  \widehat U_2\left(\Phi_2(\cdot, \l_{2,n})\right)\Phi_1(\cdot,
\l_{2,n}) -
\widehat U_2\left(\Phi_1(\cdot, \l_{2,n})\right) \Phi_2(\cdot, \l_{2,n})  \nonumber    \\
& =  (d +  \varphi_{22}(\l_{2,n}))\Phi_1(\cdot, \l_{2,n}) -
\varphi_{21}(\l_{2,n})\Phi_2(\cdot, \l_{2,n})
   \end{align}
 It follows from  the second relation in
\eqref{eq.1+a.phi11=1,d+phi22=1}  that for $n$ big enough
$f_{2,n}(\cdot)$ is a non-trivial eigenfunction of the operator
$L(Q)$ corresponding to the eigenvalue $\l_{2,n}$. Similarly, it
follows from the first  equation in~\eqref{eq:BC*} (with $b=0$)
and~\eqref{eq.1,a+psi11=1}  that for $n$ big enough the
vector-function
   \begin{align}\label{5.10Second}
g_{2,n}(\cdot) :=
U_{*1}\bigl(\Psi_2(\cdot,\overline{\l_{2,n}})\bigr)\Psi_1(\cdot,\overline{\l_{2,n}}) -
U_{*1}\bigl(\Psi_1(\cdot,\overline{\l_{2,n}})\bigr)\Psi_2(\cdot,\overline{\l_{2,n}})  \nonumber  \\
 =  (1+ \overline{d}\psi_{22}(\overline{\l_{2,n}})) \Psi_1(\cdot, \overline{\l_{2,n}})
    -  \overline{d} \psi_{11}(\overline{\l_{2,n}}) \Psi_2(\cdot, \overline{\l_{2,n}})
   \end{align}
is  a non-trivial  eigenfunction  of the operator $L(Q)^*$
corresponding to the eigenvalue $\overline{\l_{2,n}}$. Starting
with~\eqref{5.10Second_eigenf-n}  and~\eqref{5.10Second} one
completes the proof in the case of strictly regular BC by
repeating the above reasonings.
\end{proof}
%
%
\subsection{The case of general potential matrix}
%
%
In applications systems~\eqref{eq:system}--\eqref{eq:BC} appear
with potential matrices having non-trivial diagonal, i.e. of the
form
\begin{equation} \label{eq:Q.gen.def}
    Q = \begin{pmatrix} Q_{11} & Q_{12} \\ Q_{21} & Q_{22} \end{pmatrix}
    \in L^1[0,1] \otimes \bC^{2 \times 2}.
\end{equation}
First we apply gauge transformation to reduce
system~\eqref{eq:system}--\eqref{eq:BC}, with a potential matrix
$Q(\cdot)$ of the form~\eqref{eq:Q.gen.def} to similar system
with off-diagonal potential matrix $\wt{Q}$. To this end we put
\begin{equation}
    w_j(x) := \exp\left(-i b_j \int_0^x Q_{jj}(t) dt\right),
    \quad x \in [0,1], \quad j \in \{1,2\}, \label{eq:wjx.def}
\end{equation}
%
%
\begin{lemma} \label{lem:LbcQ.similarity}
Let $Q$ be a summable matrix given by~\eqref{eq:Q.gen.def}. Then
the operator $L(Q) = L(Q,{\widehat U}_1, {\widehat U}_2)$ is
similar to the operator $L(\wt{Q}) = L(\wt{Q}, \wt{U}_{1},
\wt{U}_{2})$  given by~\eqref{eq:system}--\eqref{eq:BC} with the
same $B$, a potential off-diagonal matrix  $\wt{Q}(\cdot)$,
\begin{equation}\label{eq:wtQ}
    \wt{Q}(x) := \begin{pmatrix} 0 & k(x) Q_{12}(x) \\ k^{-1}(x) Q_{21}(x) & 0\end{pmatrix},
    \qquad k(x) := w_1^{-1}(x) w_2(x),
\end{equation}
instead of ${Q}$ and the boundary conditions
\begin{equation}\label{eq:wtU}
    \wt{U}_j(y) := a_{j 1} y_1(0) + a_{j 2} y_2(0) + w_1(1) a_{j 3} y_1(1)
    + w_2(1) a_{j 4} y_2(1) = 0, \quad  j \in \{1,2\}.
\end{equation}
\end{lemma}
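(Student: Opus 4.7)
My plan is to exhibit an explicit gauge transformation realizing the similarity. Define the diagonal multiplication operator $W : L^2([0,1];\bC^2) \to L^2([0,1];\bC^2)$ by
\begin{equation}
    (W\tilde y)(x) := \diag(w_1(x), w_2(x))\tilde y(x), \qquad x \in [0,1],
\end{equation}
where $w_j$ is defined by~\eqref{eq:wjx.def}. Since $Q_{jj}\in L^1[0,1]$, the scalar functions $w_j$ and $w_j^{-1}$ are continuous and bounded on $[0,1]$, so $W$ is a bounded operator on $L^2([0,1];\bC^2)$ with bounded inverse $W^{-1}=\diag(w_1^{-1},w_2^{-1})$. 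The goal is to establish the similarity relation $L(Q) = W\, L(\wt Q)\, W^{-1}$.

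First I would verify how $W$ intertwines the differential expressions. Setting $y = W\tilde y$ and using $w_j'(x) = -i b_j Q_{jj}(x) w_j(x)$, a direct computation component-by-component yields
\begin{equation}
    -i b_j^{-1} y_j'(x) = -i b_j^{-1} w_j(x) \tilde y_j'(x) - Q_{jj}(x) w_j(x) \tilde y_j(x), \qquad j \in \{1,2\}.
\end{equation}
Substituting into $-iB^{-1} y' + Q y$, the diagonal terms $Q_{jj} w_j \tilde y_j$ cancel, and the off-diagonal terms $Q_{jk} w_k \tilde y_k$ ($j\ne k$) become $Q_{jk} (w_k/w_j) w_j \tilde y_k = w_j \cdot \wt Q_{jk} \tilde y_k$ with $\wt Q_{jk}$ as in~\eqref{eq:wtQ}. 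Factoring out $W$ from the left gives
\begin{equation}
    \bigl(-iB^{-1}(W\tilde y)' + Q\,(W\tilde y)\bigr)(x) = W\bigl(-iB^{-1}\tilde y' + \wt Q\tilde y\bigr)(x),
\end{equation}
which shows that the maximal operators satisfy $L_{\max}(Q)\,W = W\,L_{\max}(\wt Q)$, and, in particular, $W$ maps $\dom(L_{\max}(\wt Q))$ onto $\dom(L_{\max}(Q))$.

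Next I would check the boundary conditions. Since $w_j(0) = 1$ and $y_j(1) = w_j(1)\tilde y_j(1)$, one has
\begin{equation}
    \widehat U_j(W\tilde y) = a_{j1}\tilde y_1(0) + a_{j2}\tilde y_2(0) + w_1(1) a_{j3} \tilde y_1(1) + w_2(1) a_{j4} \tilde y_2(1) = \wt U_j(\tilde y),
\end{equation}
so $\widehat U_j(W\tilde y) = 0$ if and only if $\wt U_j(\tilde y) = 0$. Consequently $W$ maps $\dom(L(\wt Q))$ bijectively onto $\dom(L(Q))$. Combining this with the intertwining of the differential expressions yields $L(Q) = W\, L(\wt Q)\, W^{-1}$, which is the desired similarity.

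The verification is essentially bookkeeping; the only thing worth double-checking is that the boundedness of $W$ and $W^{-1}$ truly requires only $Q_{jj}\in L^1[0,1]$, but this is immediate since $\int_0^x Q_{jj}(t)dt$ is bounded on $[0,1]$, so $w_j$ is continuous and separated from zero. No genuine analytic difficulty arises.
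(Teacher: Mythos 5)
Your proof is correct and is precisely the standard gauge-transformation argument that the paper itself invokes only by citation (it defers to the first part of the proof of Proposition 3.4 in the reference \cite{LunMal14JST}); the computation with $W=\diag(w_1,w_2)$, the cancellation of the diagonal terms via $w_j'=-ib_jQ_{jj}w_j$, and the transfer of the boundary forms using $w_j(0)=1$ all check out. The only point worth making explicit is that $w_j$ is absolutely continuous and bounded away from zero because $\int_0^x Q_{jj}(t)\,dt$ is continuous on $[0,1]$, which you do address, so $W$ is a boundedly invertible operator mapping $\dom(L(\wt Q))$ onto $\dom(L(Q))$.
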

%
\begin{proof}
See the first part of the proof of~\cite[Proposition
3.4]{LunMal14JST}.
\end{proof}
%
%
\begin{corollary} \label{cor:separ.regul.basis}
Let $Q$ be a summable potential matrix given
by~\eqref{eq:Q.gen.def} and let boundary
conditions~\eqref{eq:BC} be separated and regular, i.e.
$$
a y_1(0) + b y_2(0) = c y_1(1) + d y_2(1) = 0 \quad \text{and}\quad  abcd \ne 0.
$$
Then  the eigenvalues of the corresponding operator $L_{C,D}(Q)$
are asymptotically separated and there exists $\beta \in \bC$
such that the following asymptotic formula holds
$$
\l_n = \frac{2 \pi n} {b_2 - b_1} + \beta + o(1) \quad \text{as}\quad  |n| \to \infty.
$$
Moreover, the system of  root vectors of  $L_{C,D}(Q)$ forms a
Riesz basis in $L^2[0,1] \otimes \bC^{2}$.
\end{corollary}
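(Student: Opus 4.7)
The plan is to reduce to the off-diagonal case treated by Theorem~\ref{th:basis.strict}, using the gauge similarity of Lemma~\ref{lem:LbcQ.similarity}. Set $W(x) := \diag(w_1(x), w_2(x))$ with $w_j$ as in~\eqref{eq:wjx.def}. Since each $w_j$ is a continuous, nowhere vanishing function on $[0,1]$, the multiplication operator induced by $W$ is a bounded, boundedly invertible operator on $L^2[0,1]\otimes\bC^2$, so it suffices to prove the statement for the similar operator $L(\widetilde Q,\widetilde U_1,\widetilde U_2)$, where $\widetilde Q$ is off-diagonal. For \emph{separated} boundary conditions we have $a_{13}=a_{14}=a_{21}=a_{22}=0$, hence the transformation rule~\eqref{eq:wtU} yields $\widetilde U_1(y)=ay_1(0)+by_2(0)$ and $\widetilde U_2(y)=cw_1(1)y_1(1)+dw_2(1)y_2(1)$; these remain separated and, since $abcd\,w_1(1)w_2(1)\neq 0$, regular.

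Next I would compute the unperturbed characteristic determinant $\Delta_0$ of $L(0,\widetilde U_1,\widetilde U_2)$ via formula~\eqref{eq:Delta0}. For separated boundary conditions the minors $J_{12}$ and $J_{34}$ vanish, while $J_{32}=-bcw_1(1)$ and $J_{14}=adw_2(1)$ are both nonzero. Thus $\Delta_0(\l)$ reduces to a two-term exponential sum, whose zeros form a single arithmetic progression
\begin{equation*}
    \l_n^0 = \frac{2\pi n}{b_2-b_1} + \beta, \qquad n\in\bZ,
\end{equation*}
for an appropriate $\beta\in\bC$ determined by $\log(bcw_1(1)/(adw_2(1)))$. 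This sequence is obviously separated, so the transformed boundary conditions $\widetilde U_1,\widetilde U_2$ are \textbf{strictly regular} in the sense of Definition~\ref{def:strictly.regular}.

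Applying Proposition~\ref{prop:Delta.regular.basic} then gives the asymptotic formula $\l_n=\l_n^0+o(1)$ for the eigenvalues $\l_n$ of $L(\widetilde Q,\widetilde U_1,\widetilde U_2)$, and hence for $L_{C,D}(Q)$, since similar operators share the spectrum. Finally, Theorem~\ref{th:basis.strict} applies directly to $L(\widetilde Q,\widetilde U_1,\widetilde U_2)$ and yields that its system of root vectors forms a Riesz basis in $L^2[0,1]\otimes\bC^2$. Pulling back by the bounded invertible operator $W$ transports this Riesz basis to a Riesz basis of root vectors of the original operator $L_{C,D}(Q)$.

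There is no serious obstacle: the argument is a direct chaining of existing results. The only points requiring brief verification are (i) that the gauge $W$ is indeed bounded with bounded inverse — which follows from the absolute continuity of $\int_0^x Q_{jj}(t)\,dt$ — and (ii) that the explicit form of $\Delta_0$ for separated boundary conditions produces an arithmetic progression of zeros, which automatically forces strict regularity and permits the invocation of Theorem~\ref{th:basis.strict}.
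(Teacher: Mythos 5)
Your argument is correct and is precisely the route the paper intends (it leaves the corollary unproved, stating it immediately after Lemma~\ref{lem:LbcQ.similarity} and having already noted in Remark~\ref{rem:cond.examples}(i) that separated regular conditions are always strictly regular): gauge away the diagonal of $Q$, check that the transformed conditions stay separated with $J_{32}=-bc\,w_1(1)\ne 0$ and $J_{14}=ad\,w_2(1)\ne 0$ so that $\Delta_0$ is a two-term exponential sum whose zeros form a single separated arithmetic progression, then invoke Proposition~\ref{prop:Delta.regular.basic} and Theorem~\ref{th:basis.strict} and transport the Riesz basis back through the bounded invertible gauge. No gaps.
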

%
%
\begin{lemma} \label{lem:U.perturb}
Let boundary conditions~\eqref{eq:BC} be regular. Then there
exists $w \ne 0$ such that the boundary conditions
\begin{equation}\label{eq:wtU2}
    \wt{U}_j(y) := a_{j 1} y_1(0) + a_{j 2} y_2(0) + w a_{j 3} y_1(1)
    + a_{j 4} y_2(1) = 0, \quad  j \in \{1,2\},
\end{equation}
are strictly regular.
\end{lemma}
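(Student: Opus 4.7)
The plan is to view
\[
    \wt\Delta_0(\lambda,w) := J_{12} + wJ_{34}e^{i(b_1+b_2)\lambda} + wJ_{32}e^{ib_1\lambda} + J_{14}e^{ib_2\lambda}
\]
as a one-parameter family of characteristic determinants indexed by $w \in \bC \setminus \{0\}$, and to pick $w$ so that its zeros are asymptotically separated. Since scaling $a_{j3} \mapsto w a_{j3}$ multiplies exactly the minors $J_{13}, J_{32}, J_{34}$ by $w$ (leaving $J_{12}, J_{14}, J_{42}$ intact), we have $\wt J_{14} \wt J_{32} = w J_{14} J_{32} \ne 0$ for every $w \ne 0$; the modified BC are therefore automatically regular, and by Proposition~\ref{prop:sine.type}, $\wt\Delta_0(\cdot,w)$ is a sine-type function with zeros confined to a strip.

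First I would handle the rational case $b_1/b_2 \in \bQ$. Writing $b_1 = -n_1 b$, $b_2 = n_2 b$ with $\gcd(n_1, n_2) = 1$, $b > 0$, and setting $z = e^{ib\lambda}$, one obtains $z^{n_1} \wt\Delta_0(\lambda, w) = P_w(z)$ with $P_w = f + wg$, where $f(z) = J_{14} z^{n_1+n_2} + J_{12} z^{n_1}$ and $g(z) = J_{34} z^{n_2} + J_{32}$. The zero set of $\wt\Delta_0(\cdot, w)$ is then a union of at most $n_1 + n_2$ arithmetic progressions of period $2\pi/b$ (one per distinct non-zero root of $P_w$), and is asymptotically (in fact uniformly) separated iff $P_w$ has only simple roots. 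A multiple root $z_0$ with $g(z_0) \ne 0$ satisfies $f'(z_0) g(z_0) - f(z_0) g'(z_0) = 0$, a polynomial equation with finitely many solutions, each uniquely determining $w = -f(z_0)/g(z_0)$; the degenerate case $g(z_0) = 0$ forces $z_0$ to be a common root of $f$ and $g$ (finitely many), and since the roots of $g$ are simple, $g'(z_0) \ne 0$ again pins down $w$. Hence the set of bad $w$ is finite, and I pick $w \ne 0$ outside it.

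For the irrational case $b_1/b_2 \notin \bQ$, I claim any $w \ne 0$ of sufficiently small modulus works, assuming first $J_{12} \ne 0$ for simplicity. Fix a horizontal strip $\Pi_{h_0}$ of height exceeding $\bigl|b_2^{-1}\ln|J_{12}/J_{14}|\bigr|$. In $\Pi_{h_0}$ the two $w$-terms of $\wt\Delta_0$ are uniformly $O(|w|)$, while the sine-type lower bound~\eqref{eq:Delta0>=} applied to $F(\lambda) := J_{12} + J_{14} e^{ib_2\lambda}$ gives a positive lower bound on $|F|$ away from its zeros. A Rouché argument then localizes the zeros of $\wt\Delta_0(\cdot, w)$ in $\Pi_{h_0}$ near the arithmetic progression of zeros of $F$, of period $2\pi/b_2$. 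The remaining zeros, for $|w|$ small, concentrate near the upper line $\Im\lambda \sim b_1^{-1}\ln|wJ_{32}/J_{12}|$; after rescaling by $wJ_{32} e^{ib_1\lambda}$, the function $\wt\Delta_0$ becomes a small perturbation of $1 + J_{12}/(wJ_{32} e^{ib_1\lambda})$, whose zeros form an arithmetic progression of period $2\pi/|b_1|$, again localized by Rouché. The combined density $b_2/(2\pi) + |b_1|/(2\pi) = (b_2 - b_1)/(2\pi)$ matches Proposition~\ref{prop:sine.type}(iv), so no zeros are missed. For $|w|$ small the two progressions are vertically separated by at least $c\ln(1/|w|)$ and each is separated internally, yielding strict regularity. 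The case $J_{12} = 0$ is treated analogously with a single progression of period $2\pi/(b_2 - b_1)$ at height $\sim (b_2-b_1)^{-1}\ln(1/|w|)$.

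The principal technical obstacle is the Rouché bookkeeping in the irrational case — in particular, verifying the magnitude comparison of the four exponentials uniformly in unbounded strips and excluding spurious zero clusters in the intermediate $\Im\lambda$-regime between the two limiting progressions. Both are handled by a direct term-by-term analysis combined with the sine-type estimate of Proposition~\ref{prop:sine.type}(iii).
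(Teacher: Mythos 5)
Your proof is correct, and it both contains the paper's argument and goes beyond it. For $b_1/b_2\in\bQ$ the paper does exactly what you do: it reduces to the polynomial $P_w(z)=z^{n_1+n_2}+J_{12}z^{n_1}+wJ_{34}z^{n_2}+wJ_{32}$ (formula~\eqref{eq:wtDelta0=P}) and simply \emph{asserts} that $w\ne0$ can be chosen so that its roots are simple; you supply the missing justification, namely that a multiple root forces either $f'g-fg'=0$ (a nonzero polynomial, since $\deg f=n_1+n_2>\deg g$, hence finitely many roots $z_0$, each determining one bad $w$) or a common simple root of $f$ and $g$, so the bad set of $w$ is finite. The genuine divergence is the irrational case: the paper evades it with ``let us assume for simplicity that $b_1/b_2\in\bQ$'' and offers no argument, whereas you give one by taking $|w|$ small, so that the zeros split into a progression of period $2\pi/b_2$ near the zeros of $J_{12}+J_{14}e^{ib_2\lambda}$ and a progression of period $2\pi/|b_1|$ pushed up to height $b_1^{-1}\ln|wJ_{32}/J_{12}|\to+\infty$, the density count $(b_2-b_1)/(2\pi)$ from Proposition~\ref{prop:sine.type}(iv) confirming that nothing is missed. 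I checked the dominance estimates you defer to: in the intermediate band the constant $J_{12}$ dominates the other three terms once one stays a fixed large distance above the lower line and below the upper one, and the $J_{34}$-term is smaller than the locally dominant exponential by a factor $e^{-b_2\Im\lambda}$ (respectively $O(|w|^{b_2/(b_2-b_1)})$ in the $J_{12}=0$ case), so the Rouch\'e bookkeeping you flag as the main obstacle does close, with uniform separation of the two families for small $|w|$. In short, your proof is complete where the paper's is only a sketch of a special case; the price is the extra Rouch\'e analysis, the gain is that the lemma is actually established for all $b_1<0<b_2$.
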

%
%
\begin{proof}
Since boundary conditions~\eqref{eq:BC} are regular,
$J_{14}J_{32}\not = 0$, we can assume without loos of generality
that $J_{14}= 1$. Let $\wt{\Delta}_0(\cdot)$ be the
characteristic determinant corresponding to the
BVP~\eqref{eq:system},~\eqref{eq:wtU2}. It is easily seen that
\begin{equation} \label{eq:wtDelta0}
    \wt{\Delta}_0(\l) = J_{12} + w J_{34} e^{i (b_1+b_2) \l}
    + w J_{32}e^{i b_1 \l} + e^{i b_2 \l}.
\end{equation}
Let us assume for simplicity  that $b_1/b_2 \in \bQ$, i.e. $b_1
= -n_1 b$, $b_2 = n_2 b$, $n_1, n_2 \in \bN$, $b>0$. In this
case we can rewrite $\wt{\Delta_0}(\cdot)$ in the following form
\begin{equation} \label{eq:wtDelta0=P}
    \wt{\Delta}_0(\l) = e^{i b_1 \l} P_w(e^{i b \l}), \quad
    P_w(z) := z^{n_1+n_2} + J_{12} z^{n_1} + w J_{34} z^{n_2} + w J_{32}.
\end{equation}
Since $J_{32} \ne 0$,  one  can choose $w \ne 0$ such that the
zeros of the polynomial $P_w(z)$ are simple. Clearly, for such
$w$ the zeros of $\wt{\Delta}_0(\cdot)$ are asymptotically
separated and thus boundary conditions~\eqref{eq:wtU2} are
strictly regular.
\end{proof}
To state the next result we recall that $m_a(\l_0)$ and
$m_g(\l_0)$ denote the algebraic and geometric multiplicities of
$\l_0$, respectively. Moreover, if $\l_0$ is an isolated
eigenvalue, then  $m_a(\l_0)$  equals  the dimension of the
Riesz projection.

We need the following known abstract result (see
e.g.~\cite{SavShk14}).
%
%
\begin{proposition} \label{prop:Riesz.basis.abstract}
Let $L$ be an operator with compact resolvent in a separable
Hilbert space $\fH$ and let $\{\l_n\}_{n \in \bZ}$ be the
sequence of its distinct eigenvalues. Assume that $m_a(\l_n) <
\infty$ for $n\in \bN$ and that $A$ has finitely many
associative vectors, i.e. there exists $n_0 \in \bN$ such that
$m_a(\l_n) = m_g(\l_n)$ for $|n| > n_0.$ Further, assume that
\begin{equation} \label{eq:ln>cn}
    |\l_n| \geqslant C |n|, \quad |\Im \l_n| \leqslant \tau, \qquad n \in \bZ,
\end{equation}
for some $C, \tau > 0$. Finally, let the system of root vectors
of the operator $L$ forms a Riesz basis in $\fH$. Then for any
bounded operator $T$ in $\fH$ the system of root vectors of the
perturbed operator $A=L+T$ forms a Riesz basis with parentheses
in $\fH$.
\end{proposition}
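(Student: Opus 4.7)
The plan is to adapt the classical Keldysh--Markus perturbation argument: use the growth condition~\eqref{eq:ln>cn} to partition the spectrum of $L$ into finite clusters, construct contours enclosing each cluster on which the perturbed resolvent is uniformly bounded, and identify the root subspaces of $A$ with those contours.

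Since the root vectors of $L$ form a Riesz basis in $\fH$ and $L$ has only finitely many associated vectors, $L$ is similar via a bounded isomorphism to a normal operator modulo a finite-dimensional Jordan part. In particular, one has the resolvent estimate
$$
\|(L-\l)^{-1}\| \leqslant \frac{M}{\mathrm{dist}(\l,\sigma(L))}, \qquad \l\notin\sigma(L),
$$
with some $M>0$ depending only on the Riesz basis bounds. The asymptotic condition~\eqref{eq:ln>cn} forces $\l_n$ into the strip $\{|\Im\l|\leqslant\tau\}$ with uniformly bounded density along the real axis: any horizontal strip of length $1$ contains at most a fixed number of eigenvalues. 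Fix $R>2M\|T\|$; then $\bZ$ can be partitioned into finite consecutive blocks $\{I_k\}$ and rectangles $\Gamma_k\subset\{|\Im\l|\leqslant\tau+1\}$ enclosing exactly the cluster $\Lambda_k:=\{\l_n:n\in I_k\}$ and satisfying $\mathrm{dist}(\partial\Gamma_k,\sigma(L))\geqslant R$, with $\card(I_k)$ uniformly bounded in $k$.

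On each $\partial\Gamma_k$, $\|T(L-\l)^{-1}\|\leqslant M\|T\|/R<1/2$, so the Neumann series gives $(A-\l)^{-1}=(L-\l)^{-1}(I+T(L-\l)^{-1})^{-1}$ well defined and uniformly bounded, and in particular $\partial\Gamma_k\subset\rho(A)$. Define the Riesz projections
$$
P_k:=\frac{1}{2\pi i}\oint_{\partial\Gamma_k}(L-\l)^{-1}\,d\l, \qquad Q_k:=\frac{1}{2\pi i}\oint_{\partial\Gamma_k}(A-\l)^{-1}\,d\l.
$$
Then $P_k$ projects onto the span of the root vectors of $L$ for the eigenvalues in $\Lambda_k$, and $Q_k$ projects onto the span of the root vectors of $A$ for the eigenvalues of $A$ lying inside $\Gamma_k$. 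Both families $\{P_k\}$ and $\{Q_k\}$ are uniformly bounded in operator norm, and $\dim Q_k\fH=\dim P_k\fH$.

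The main obstacle is to promote the preceding spectral picture into the statement that $\{Q_k\fH\}$ is a Riesz basis of subspaces of $\fH$. The Riesz basis property of the root vectors of $L$ is equivalent to saying that $\{P_k\fH\}$ is already a Riesz basis of subspaces; combining the uniform boundedness of $\{P_k\}$ and $\{Q_k\}$ with the spectral separation inherited from the contours $\partial\Gamma_k$, one invokes the classical abstract perturbation theorem for Riesz bases of subspaces (see e.g.~\cite{Markus88,Shk10}) to conclude that $\{Q_k\fH\}$ is also a Riesz basis of subspaces. Within each finite-dimensional $Q_k\fH$, the root vectors of $A$ corresponding to the eigenvalues of $A$ enclosed by $\Gamma_k$ form a basis of $Q_k\fH$; concatenating these bases over $k$ yields the desired Riesz basis with parentheses of the root vectors of $A$ in $\fH$, the blocks being precisely the subspaces $Q_k\fH$.
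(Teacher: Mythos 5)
Your strategy---cluster the spectrum of $L$, enclose each cluster by a contour on which the Neumann series for $(A-\l)^{-1}$ converges, and compare the Riesz projections $P_k$ and $Q_k$---only works when $\|T\|$ is small compared with the spectral gaps of $L$, and hypothesis \eqref{eq:ln>cn} does not supply the gaps you need. The condition $|\l_n|\geqslant C|n|$ gives only a \emph{linear} bound on the counting function $\card\{n:|\l_n|\leqslant t\}$; it does not give bounded density in unit intervals (clusters of size growing like $t$ are compatible with it), and even bounded density would only produce gaps of some fixed length of order $C$. Consequently, for a bounded $T$ with $2M\|T\|$ exceeding that fixed length (think of $\sigma(L)=\bZ$, where every gap equals $1$, and $\|T\|$ large), there is \emph{no} partition of the spectrum into finite clusters with $\mathrm{dist}(\partial\Gamma_k,\sigma(L))\geqslant R>2M\|T\|$: any boundary separating two consecutive finite clusters lies within half a gap of the spectrum. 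The Neumann-series step therefore collapses precisely in the regime the proposition is meant to cover (arbitrary bounded $T$). Overcoming this is the whole content of the Katsnel'son--Markus--Matsaev theorem, whose proof replaces the pointwise bound $\|T(L-\l)^{-1}\|<1$ by integrated resolvent estimates along vertical lines, and this is where genuine self-adjointness (not merely normality plus the resolvent estimate $\|(L-\l)^{-1}\|\leqslant M/\mathrm{dist}(\l,\sigma(L))$) is used. A second, independent gap is your final step: uniform boundedness of the families $\{P_k\}$ and $\{Q_k\}$ together with $\dim Q_k\fH=\dim P_k\fH$ does not imply that $\{Q_k\fH\}$ is a Riesz basis of subspaces (uniformly bounded partial-sum projections exist for conditional bases too); one needs something like $\sum_k\|P_k-Q_k\|^2<\infty$ or unconditional convergence of $\sum_k Q_k$, and the ``classical abstract perturbation theorem'' you invoke there is essentially the statement you are trying to prove.

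For comparison, the paper's proof sidesteps both issues by an explicit reduction to the known theorem: a finite-rank perturbation $K$ removes the finitely many associated vectors, so $L+K$ is similar to a normal operator $H=H_R+iH_I$ with $H_R$, $H_I$ commuting and self-adjoint; since $\sigma(L)$ lies in the strip $|\Im\l|\leqslant\tau$, the imaginary part $H_I$ is bounded, hence $A=L+T$ is similar to a bounded perturbation of the self-adjoint operator $H_R$, whose eigenvalues still satisfy \eqref{eq:ln>cn}; the conclusion is then exactly the Katsnel'son--Markus--Matsaev theorem \cite[Theorem 3.1]{Katsn67}. If you want a self-contained argument you would have to reproduce the proof of that theorem, not the small-perturbation contour argument.
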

%
%
\begin{proof}
Since $L$ has finitely many associated vectors, there exists a
finite-dimensional operator $K$ such that the operator $L+K$ has
no  associative vectors, i.e.  $m_a(\l_n) = m_g(\l_n)$ for $n\in
\bN$. Then the system of eigenvectors of  $L + K$ constitutes  a
Riesz basis in $\fH$, i.e. the operator  $L + K$ is similar to a
normal operator  $H$. The latter admits a representation $H =
H_R + iH_I$ where the operators $H_R := \overline{(H +H^*)/2} =$
and $H_I:= \overline{(H-H^*)/2i}$ are self-adjoint and commute
(see~\cite[Theorem 6.6.1]{BirSol87}). Since the spectrum  of $H$
lie in a strip $\Pi_{\tau}$,  its imaginary part $H_I$ is
bounded,  $\|H_I\|\le \tau$. Clearly,
inequality~\eqref{eq:ln>cn} remains valid for eigenvalues of $H$
maybe with another constant  $C>0$. Therefore the operator $A$
is similar to a bounded perturbation of the self-adjoint
operator $H$ whose eigenvalues satisfy~\eqref{eq:ln>cn}. Hence,
by~\cite[Theorem 3.1]{Katsn67} (Katsnel'son-Markus-Matsaev
theorem, see also~\cite{MarMats84,Markus88}) the system of root
vectors of  $A$ constitutes a Riesz basis with parentheses.
\end{proof}
%
%
\begin{proposition} \label{prop:regul.basis.paren}
Let $Q$ be a summable potential matrix given
by~\eqref{eq:Q.gen.def} and let $L_{C, D}(Q)$ be the operator
associated in $L^2[0,1] \otimes \bC^2$ with the
BVP~\eqref{eq:system}--\eqref{eq:BC}. Assume that boundary
conditions~\eqref{eq:BC} are regular. Then root vectors system
of the operator $L_{C, D}(Q)$ forms a Riesz basis with
parentheses in $L^2[0,1] \otimes \bC^2$.
\end{proposition}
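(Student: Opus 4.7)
The plan is to reduce Proposition~\ref{prop:regul.basis.paren} to the strictly regular case by combining a gauge transformation with a bounded additive perturbation, and then to invoke Proposition~\ref{prop:Riesz.basis.abstract}. The main obstacle is that replacing regular boundary conditions by strictly regular ones cannot directly be phrased as a bounded additive perturbation of the operator (operators with different boundary conditions have different domains). This will be circumvented by exploiting Lemma~\ref{lem:LbcQ.similarity}: the boundary-coefficient scaling factors $w_j(1) = \exp(-i b_j \int_0^1 Q_{jj})$ can be adjusted by modifying the diagonal of $Q$ by a constant matrix, which is a genuinely bounded additive change of $Q$.

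By Lemma~\ref{lem:U.perturb}, pick $w \neq 0$ such that replacing $a_{j3}$ by $w a_{j3}$ in~\eqref{eq:BC} produces strictly regular boundary conditions. Fix a branch of $\log$ and set $c_1 := \int_0^1 Q_{11}(t)\,dt - (i/b_1)\log w$, $c_2 := \int_0^1 Q_{22}(t)\,dt$, and $Q_0 := \diag(c_1, c_2) \in \bC^{2\times 2}$. Since $Q_0$ is a constant matrix, $Q_0 \in L^\infty([0,1];\bC^{2\times 2})$, so the multiplication operator $M_{Q_0}$ is bounded on $L^2([0,1];\bC^2)$. The operators $L_{C,D}(Q)$ and $L_{C,D}(Q - Q_0)$ share the common domain~\eqref{1.3BB}, yielding
\[
    L_{C,D}(Q) = L_{C,D}(Q - Q_0) + M_{Q_0}.
\]

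Apply Lemma~\ref{lem:LbcQ.similarity} to $L_{C,D}(Q - Q_0)$: it is similar to an operator $L(\wt Q, \wt U_1, \wt U_2)$ with off-diagonal potential $\wt Q \in L^1([0,1];\bC^{2\times 2})$, in which $a_{j3}$ is rescaled by $\exp(-i b_1 \int_0^1 (Q_{11} - c_1)) = w$ and $a_{j4}$ by $\exp(-i b_2 \int_0^1 (Q_{22} - c_2)) = 1$. By the choice of $w$, the conditions $\wt U_1, \wt U_2$ are strictly regular, and Theorem~\ref{th:basis.strict} yields that the root vectors of $L(\wt Q, \wt U_1, \wt U_2)$ form a Riesz basis in $L^2[0,1] \otimes \bC^2$. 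Since similarity preserves the Riesz basis property and all spectral multiplicities, the same holds for $L_{C,D}(Q - Q_0)$, and its eigenvalues are asymptotically simple (from strict regularity of the gauged BC).

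It remains to verify the remaining hypotheses of Proposition~\ref{prop:Riesz.basis.abstract} for $L := L_{C,D}(Q - Q_0)$ and $T := M_{Q_0}$. Regularity of~\eqref{eq:BC} together with Propositions~\ref{prop:sine.type} and~\ref{prop:Delta.regular.basic} yields the strip bound $|\Im \lambda_n| \leqslant h$ and the lower bound $|\lambda_n| \geqslant C|n|$ for large $|n|$; compactness of the resolvent and finiteness of each $m_a(\lambda_n)$ are standard for operators with regular BC. Thus Proposition~\ref{prop:Riesz.basis.abstract} applies and delivers the Riesz basis with parentheses for the root vectors of $L + M_{Q_0} = L_{C,D}(Q)$.
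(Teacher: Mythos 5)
Your proof is correct and follows essentially the same route as the paper: a constant diagonal perturbation of $Q$ (bounded, hence admissible in Proposition~\ref{prop:Riesz.basis.abstract}) combined with the gauge transformation of Lemma~\ref{lem:LbcQ.similarity} and the choice of $w$ from Lemma~\ref{lem:U.perturb} to land on strictly regular boundary conditions, after which Theorem~\ref{th:basis.strict} and Proposition~\ref{prop:Riesz.basis.abstract} finish the argument. The only cosmetic difference is that you fold the paper's two gauge steps (first removing the diagonal of $Q$, then adding $\diag(q_0,0)$) into a single choice of the constant matrix $Q_0$, which also normalizes the $a_{j4}$-scaling to $1$; this changes nothing of substance.
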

%
%
\begin{proof}
It is clear that the regularity of boundary conditions is
preserved under gauge transformation used in
Lemma~\ref{lem:LbcQ.similarity}. Therefore one can  assume that
$Q$ is off-diagonal. Now let  us consider a perturbation  of the
operator $L(Q)$ by a constant  diagonal potential matrix $Q_0 =
\diag(q_0, 0)$, $q_0 \in \bC$. Applying
Lemma~\ref{lem:LbcQ.similarity} again we see that the operator
$L_{C,D}(Q+Q_0)$ is similar to the operator $L_{C,
\wt{D}}(\wt{Q})$ with off-diagonal $\wt{Q}$ and with boundary
conditions $C y(0)+\wt{D} y(1)=0$, where $\wt{D} = D \cdot
\diag(w, 1)$, $w = e^{-i b_1 q_0}$. By
Lemma~\ref{lem:U.perturb}, we can choose $q_0 \in \bC$ such that
the boundary conditions $C y(0)+\wt{D} y(1)=0$ are strictly
regular. Let us verify that the operator $L(Q+Q_0)$ satisfies
conditions of Proposition~\ref{prop:Riesz.basis.abstract}. Since
eigenvalues of $L(Q+Q_0)$ are asymptotically separated, it
follows that their algebraic multiplicities are finite and
$L(Q+Q_0)$ has finitely many associated vectors.

According to Theorem~\ref{th:basis.strict}  the root vectors
system of the operator $L(Q+Q_0)$ forms a Riesz basis in
$L^2[0,1] \otimes \bC^{2}$. On the other hand,
inequalities~\eqref{eq:ln>cn} are implied by
Proposition~\ref{prop:sine.type}(iv). Thus, the operator
$L(Q+Q_0)$  satisfies the conditions of
Proposition~\ref{prop:Riesz.basis.abstract}, and hence  the root
vectors system of the original operator $L(Q) = L(Q+Q_0) - Q_0$
forms a Riesz basis with parentheses.
\end{proof}
%
%
\begin{remark}\label{rem_separ_BC}
\textbf{(i)} Note that inequalities \eqref{eq:ln>cn} are valid
for the roots of any entire function of  exponential  type
$\sigma<\infty$ with infinitely many zeros. Namely, the
following inequalities hold
\begin{equation} \label{eq:ln>cn}
    |\l_n| \geqslant \frac{ |n|}{e\sigma},   \qquad |n|\ge N,
\end{equation}
for all but finitely many numbers. In particular, they are valid
for the roots of  $\Delta(\cdot)$. However,
Proposition~\ref{prop:sine.type}(iv) gives sharp asymptotic.

\textbf{(ii)} The Riesz basis property for $2\times 2$ operators
$L_{C,D}(Q)$ with  separated boundary conditions was established
earlier than for the operators with general regular boundary
conditions. Namely, this property was proved firstly
in~\cite{TroYam02} and later on in~\cite{DjaMit10BariDir}
and~\cite{Bask11} for $B = \diag(-1,1),\ Q \in L^2[0,1] \otimes
\bC^{2 \times 2}$, and
 in~\cite{HasOri09} for $B = \diag(b_1,b_2),\ Q \in
C^1[0,1] \otimes \bC^{2 \times 2}$.

\textbf{(iii)} The Bari-Markus property of the Riesz projectors
of unperturbed and perturbed  BVPs for separated, periodic and
antiperiodic boundary conditions was established in
~\cite{DjaMit10BariDir} and reproved by another method in
~\cite{Bask11}. In~\cite{DjaMit12UncDir} similar  results have
been obtained  for general regular boundary conditions. Finally,
in the recent paper~\cite{MykPuy13} the results
of~\cite{DjaMit10BariDir} regarding the Bari-Markus property  in
$L^2[0,1] \otimes \Bbb C^{2m}$ were extended to the case of the
Dirichlet BVP for $2m \times 2m$ Dirac equation with  $Q \in
L^2([0,1]; \bC^{2m\times 2m})$.
\end{remark}
%
%
\section{Application to the Timoshenko beam model}
\label{sec:Timoshenko}
%
%
Consider the following linear system of two coupled hyperbolic
equations for $t \geqslant 0$
\begin{align}
    \label{eq:Tim.Ftt}
        I_{\rho}(x) \Phi_{tt} &=& K(x)(W_x-\Phi) + (EI(x) \Phi_x)_x - p_1(x) \Phi_t, \quad x \in [0, \ell],\\
    \label{eq:Tim.Wtt}
        \rho(x) W_{tt} &=& (K(x)(W_x-\Phi))_x - p_2(x) W_t, \qquad \qquad \qquad x \in [0, \ell].
\end{align}
The vibration of the Timoshenko beam of the length $\ell$
clamped at the left end is governed by the
system~\eqref{eq:Tim.Ftt}--\eqref{eq:Tim.Wtt} subject to the
following boundary conditions for $t \geqslant 0$~\cite{Tim55}:
\begin{align}
    \label{eq:Tim.W0F0}
        W(0,t) = \Phi(0,t) &=& 0, \\
    \label{eq:Tim.WLFLa1}
        \bigl(EI(x) \Phi_x(x,t) + \alpha_1 \Phi_t(x,t) + \beta_1 W_t(x,t)\bigr)\bigr|_{x=l} &=& 0, \\
    \label{eq:Tim.WLFLa2}
        \bigl(K(x)(W_x(x,t)-\Phi(x,t)) + \alpha_2 W_t(x,t) + \beta_2 \Phi_t(x,t)\bigr)\bigr|_{x=l} &=& 0.
\end{align}
Here $W(x,t)$ is the lateral displacement at a point $x$ and
time $t$, $\Phi(x,t)$ is the bending angle at a point $x$ and
time $t$, $\rho(x)$ is a mass density, $K(x)$ is the shear
stiffness of a uniform cross-section, $I_{\rho}(x)$ is the
rotary inertia, $EI(x)$ is the flexural rigidity at a point $x$,
$p_1(x)$ and $p_2(x)$ are locally distributed feedback
functions, $\alpha_j, \beta_j \in \bC$, $j \in \{1,2\}$.
Boundary conditions at the right end contain as partial cases
most of the known boundary conditions if $\alpha_1, \alpha_2$
are allowed to be infinity.

Regarding the coefficients we assume that they satisfy the
following general conditions:
\begin{align}
    \label{eq:Tim.coef.cond1}
        \rho, I_{\rho}, K, EI \in C[0,\ell], \qquad p_1, p_2 \in L^1[0,\ell],\\
    \label{eq:Tim.coef.cond2}
        0 < C_1 \leqslant \rho(x), I_{\rho}(x), K(x), EI(x) \leqslant C_2, \quad x \in [0,\ell].
\end{align}
The energy space associated with the
problem~\eqref{eq:Tim.Ftt}--\eqref{eq:Tim.WLFLa2} is
\begin{equation} \label{eq:cH.def}
    \fH := \wt{H}^1_0[0,\ell] \times L^2[0,\ell] \times \wt{H}^1_0[0,\ell] \times L^2[0,\ell],
\end{equation}
where $\wt{H}^1_0[0,\ell] := \{f \in W^{1,2}[0,\ell] :
f(0)=0\}$. The norm in the energy space is defined as follows:
\begin{equation} \label{eq:Tim.|y|H}
    \|y\|_{\fH}^2 = \int_0^\ell \bigl(EI|y_1'|^2+I_{\rho}|y_2|^2 + K|y_3'-y_1|^2+\rho|y_4|^2\bigr)dx,
    \quad y =\col(y_1,y_2,y_3,y_4).
\end{equation}
The problem~\eqref{eq:Tim.Ftt}--\eqref{eq:Tim.WLFLa2} can be
rewritten as
\begin{equation} \label{eq:Tim.yt=i.cLy}
    y_t = i \cL y, \quad y(x,t)|_{t=0} = y_0(x),
\end{equation}
where $y$ and $\cL$ are given by
\begin{equation} \label{eq:Tim.Ly.def}
    y = \begin{pmatrix} \Phi(x,t) \\ \Phi_t(x,t) \\ W(x,t) \\ W_t(x,t) \end{pmatrix}, \ \
    \cL \begin{pmatrix} y_1 \\ y_2 \\ y_3 \\ y_4 \end{pmatrix} = \frac{1}{i} \begin{pmatrix}
        y_2 \\ \frac{1}{I_{\rho}(x)}\Bigl(K(x)(y_3'-y_1) + \bigl(EI(x) y_1'\bigr)' - p_1(x) y_2\Bigr) \\
        y_4 \\ \frac{1}{\rho(x)}\Bigl(\bigl(K(x)(y_3'-y_1)\bigl)' - p_2(x) y_4\Bigr)
    \end{pmatrix}
\end{equation}
on the domain
\begin{align} \label{eq:Tim.dom.cL}
    \dom(\cL) &=& \left\{ y = \col(y_1,y_2,y_3,y_4) : y_1, y_2, y_3, y_4 \in \wt{H}^1_0[0,\ell]\right., \nonumber \\
    && EI \cdot y_1' \in AC[0,\ell], \ \ (EI\cdot y_1')' - p_1 y_2 \in L^2[0,\ell], \nonumber \\
    && K\cdot(y_3'-y_1) \in AC[0,\ell], \ \ (K\cdot(y_3'-y_1))' - p_2 y_4 \in L^2[0,\ell], \nonumber \\
    && \bigl(EI \cdot y_1'\bigr)(\ell) + \alpha_1 y_2(\ell) + \beta_1 y_4(\ell)= 0, \nonumber \\
    && \Bigl.\bigl(K \cdot (y_3'-y_1)\bigr)(\ell) + \alpha_2 y_4(\ell) + \beta_2 y_2(\ell)= 0 \Bigr\}.
\end{align}
Timoshenko beam model is investigated in numerous papers
(see~\cite{Tim55,KimRen87,Shub02,XuYung04,XuHanYung07,WuXue11}
and the references therein). A number of stability,
controllability, and optimization problems were studied. Note
also that the general
model~\eqref{eq:Tim.Ftt}--\eqref{eq:Tim.WLFLa2} of spatially
non-homogenous Timoshenko beam with both boundary and locally
distributed damping covers the cases studied by many authors.
Geometric properties of the system of root functions of the
operator $\cL$ play important role in investigation of different
properties of the
problem~\eqref{eq:Tim.Ftt}--\eqref{eq:Tim.WLFLa2}.

Below we establish the Riesz basis property with parentheses of
the operator $\cL$, without analyzing its spectrum. For
convenience we impose the following additional algebraic
assumption on $\cL$:
\begin{equation} \label{eq:Tim.Irho=C0 EI/K rho}
    \nu(x) :=  \frac{EI(x) \rho(x)} {K(x) I_{\rho}(x)} = \const, \quad x \in [0,\ell],
\end{equation}
Clearly,~\eqref{eq:Tim.Irho=C0 EI/K rho} is satisfied whenever
$I_{\rho}(x) = R \rho(x)$, where $R = \rm{const}$ is a
cross-sectional area of the beam, $EI$ and $K$ are constant
functions, while $\rho \in AC[0,\ell]$ and is arbitrary positive
(cf. condition~\eqref{eq:Tim.h1,h2.in.AC}). Our approach to the
spectral properties of the operator $\cL$ is based on the
similarity reduction of $\cL$ to a special $4\times 4$
Dirac-type operator. To state the result we need some additional
preparations.

Let $\gamma(\cdot)$ be given by
\begin{equation} \label{eq:Tim.Irho/EI=...}
    \sqrt{\frac{I_{\rho}(x)}{EI(x)}} = b_1 \gamma(x), \quad\text{where}\quad b_1>0
    \quad\text{and}\quad \int_0^\ell \gamma(x) dx = 1.
\end{equation}
Conditions~\eqref{eq:Tim.coef.cond1}
and~\eqref{eq:Tim.coef.cond2} imply together that $\gamma \in
C[0,\ell]$ and is positive. Further, in view
of~\eqref{eq:Tim.Irho=C0 EI/K rho} we have
\begin{equation} \label{eq:Tim.rho/K=...}
    \sqrt{\frac{\rho(x)}{K(x)}} = b_2 \gamma(x), \quad\text{where}\quad b_2>0.
\end{equation}
Let
\begin{align}
    \label{eq:Tim.B}
        B &:=& \diag(-b_1, b_1, -b_2, b_2). \\
    \label{eq:Tim.Theta(x)}
        \Theta(x) &:=& -2i \diag(I_{\rho}(x), I_{\rho}(x), \rho(x), \rho(x)), \\
    \label{eq:Tim.g1.g2.def}
        h_1(x) &:=& \sqrt{EI(x) I_{\rho}(x)}, \qquad h_2(x):=\sqrt{K(x) \rho(x)}.
\end{align}
In the sequel we assume that
\begin{equation} \label{eq:Tim.h1,h2.in.AC}
    h_1, h_2 \in AC[0,\ell].
\end{equation}
Therefore, according
to~\eqref{eq:Tim.coef.cond1}--\eqref{eq:Tim.coef.cond2} the
following matrix function is well-defined:
\begin{equation}
    \label{eq:Tim.Q(x)}
        \widehat{Q}(x) := \Theta^{-1}(x)
        \begin{pmatrix}
            p_1+h_1' & p_1-h_1' &     h_2  &    -h_2  \\
            p_1+h_1' & p_1-h_1' &     h_2  &    -h_2  \\
               -h_2  &    -h_2  & p_2+h_2' & p_2-h_2' \\
                h_2  &     h_2  & p_2+h_2' & p_2-h_2'
        \end{pmatrix}.
\end{equation}
Next, we set
\begin{equation} \label{eq:Tim.t=t(x)}
    t(x) = \int_0^x \gamma(s) ds, \quad x \in [0,\ell].
\end{equation}
Since $\gamma \in C[0,\ell]$ and is positive, the function
$t(\cdot)$ strictly increases on $[0,\ell]$, $t(\cdot) \in
C^1[0,\ell]$, and due to~\eqref{eq:Tim.Irho/EI=...} $t(\ell)=1$.
Hence, the inverse function $x(\cdot) := t^{-1}(\cdot)$ is well
defined, strictly increasing on $[0,1]$, and $x(\cdot) \in
C^1[0,1]$. Next, we put
\begin{equation} \label{eq:Tim.Q}
    Q(t) := \widehat{Q}(x(t)) =: (q_{jk}(t))_{j,k=1}^4, \quad t \in [0,1].
\end{equation}
Finally, let
\begin{equation} \label{eq:Tim.C.D}
    C = \begin{pmatrix}
        1 & 1 & 0 & 0 \\
        0 & 0 & 0 & 0 \\
        0 & 0 & 1 & 1 \\
        0 & 0 & 0 & 0
    \end{pmatrix}, \ \
    D = \begin{pmatrix}
        0                    & 0                    & 0                    & 0                    \\
        \alpha_1 - h_1(\ell) & \alpha_1 + h_1(\ell) & \beta_1              & \beta_1              \\
        0                    & 0                    & 0                    & 0                    \\
        \beta_2              & \beta_2              & \alpha_2 - h_2(\ell) & \alpha_2 + h_2(\ell) \\
    \end{pmatrix}.
\end{equation}
%
%
\begin{proposition} \label{prop:Tim.similar}
\cite[Proposition 6.1]{LunMal14Arx,LunMal14JST}
Let functions $\rho, I_{\rho}, K, EI, p_1, p_2, h_1, h_2$
satisfy
conditions~\eqref{eq:Tim.coef.cond1},~\eqref{eq:Tim.coef.cond2},~\eqref{eq:Tim.Irho=C0
EI/K rho} and~\eqref{eq:Tim.h1,h2.in.AC}. Then the operator
$\cL$ is similar to the $4 \times 4$ Dirac-type operator $L :=
L_{C,D}(Q)$ with the matrices $B,C,D,$ and $Q(\cdot)$ given
by~\eqref{eq:Tim.B},~\eqref{eq:Tim.C.D} and~\eqref{eq:Tim.Q}.
\end{proposition}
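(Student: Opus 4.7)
My plan is to construct the similarity transformation explicitly as a composition of three elementary steps: (a) a reduction of the spectral problem $\cL y = \mu y$ to a first-order $4\times 4$ system by introducing characteristic variables, (b) a diagonalizing (gauge) multiplication by an $x$-dependent invertible $4\times 4$ matrix, and (c) a Liouville-type change of the independent variable $x\mapsto t=t(x)$ that normalizes the interval and forces the leading coefficient matrix to be the constant $B$ of~\eqref{eq:Tim.B}. The key algebraic facts driving the construction are that the two ``characteristic speeds'' $c_1(x):=\sqrt{EI(x)/I_\rho(x)}=1/(b_1\gamma(x))$ and $c_2(x):=\sqrt{K(x)/\rho(x)}=1/(b_2\gamma(x))$ are proportional (this is precisely the condition~\eqref{eq:Tim.Irho=C0 EI/K rho}), so that a single change of variable $t(x)=\int_0^x\gamma(s)\,ds$ simultaneously normalizes both blocks.

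First, I use the first and third coordinates of $\cL y=\mu y$ to eliminate $y_2=-i\mu y_1$ and $y_4=-i\mu y_3$, so the spectral problem reduces to a coupled second-order system in $(y_1,y_3)$. Following the standard Liouville trick for hyperbolic $2\times 2$-blocks I introduce the ``Riemann invariants''
\begin{equation}
    v_1^{\pm}:=h_1(x)\,y_1'(x)\pm i\mu\, I_\rho(x)\,y_1(x),\qquad
    v_2^{\pm}:=h_2(x)\bigl(y_3'(x)-y_1(x)\bigr)\pm i\mu\,\rho(x)\,y_3(x),
\end{equation}
with $h_1,h_2$ as in~\eqref{eq:Tim.g1.g2.def}. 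A direct computation (differentiating $v_j^{\pm}$ and using the second and fourth rows of $\cL y=\mu y$) shows that $v:=\col(v_1^-,v_1^+,v_2^-,v_2^+)$ satisfies a first-order system $v'(x)=\mu\,M(x)\,v+N(x)\,v$, where $M(x)$ is the diagonal matrix with entries $\pm i/c_1(x),\pm i/c_2(x)$ and $N(x)$ collects all lower-order terms (coefficients are linear combinations of $h_1',h_2',p_1,p_2$ divided by $I_\rho,\rho$). The functions $h_1',h_2'$ appear precisely because the substitutions $y_1'=v_1^+$-type produce derivatives of $h_1,h_2$; here the regularity assumption $h_1,h_2\in AC[0,\ell]$ is used.

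Next I apply the change of variable $t=t(x)$ of~\eqref{eq:Tim.t=t(x)}. By~\eqref{eq:Tim.Irho/EI=...}--\eqref{eq:Tim.rho/K=...} one has $1/c_1=b_1\gamma=b_1\,t'(x)$ and $1/c_2=b_2\,t'(x)$, so the chain rule turns the diagonal principal part into $iB\,\partial_t$ with $B$ exactly as in~\eqref{eq:Tim.B}. Dividing by $i$ and moving $B$ to the left produces an equation of the form $-iB^{-1}\tilde v'(t)+Q(t)\tilde v(t)=\mu\tilde v(t)$, and a routine (but lengthy) bookkeeping identifies $Q(t)$ with the matrix in~\eqref{eq:Tim.Q(x)}--\eqref{eq:Tim.Q}; the prefactor $\Theta^{-1}(x)=-(2i)^{-1}\diag(I_\rho,I_\rho,\rho,\rho)^{-1}$ there is precisely what one obtains after simplification since $v_1^+-v_1^-=2i\mu I_\rho y_1$, $v_1^++v_1^-=2h_1 y_1'$, etc. Finally, the boundary conditions~\eqref{eq:Tim.W0F0} translate to $v_1^-(0)+v_1^+(0)=0$ and $v_2^-(0)+v_2^+(0)=0$ (this yields the matrix $C$), while~\eqref{eq:Tim.WLFLa1}--\eqref{eq:Tim.WLFLa2} combined with $y_2=-i\mu y_1$ and $y_4=-i\mu y_3$ produce the rows of $D$ with the telltale entries $\alpha_j\mp h_j(\ell)$.

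The map $\cL\mapsto L_{C,D}(Q)$ is the composition of the above pointwise linear transformation $y\mapsto v$ on $\fH$, the change of variable $x\mapsto t$, and the scalar rescalings that convert the weighted energy norm~\eqref{eq:Tim.|y|H} into the standard norm on $L^2([0,1];\bC^4)$. Each of these is bounded with bounded inverse under assumptions~\eqref{eq:Tim.coef.cond1}--\eqref{eq:Tim.coef.cond2} and~\eqref{eq:Tim.h1,h2.in.AC}. I expect the main technical obstacle to be the verification that the bookkeeping in step (b)--(c) reproduces \emph{exactly} the matrix $\widehat Q(x)$ of~\eqref{eq:Tim.Q(x)}; in particular, the appearance of the derivatives $h_1',h_2'$ on the diagonal $2\times 2$ blocks and of $\pm h_2$ off-diagonal must be checked carefully, since it is where the coupling between the two characteristic systems enters and where the assumption $\nu=\const$ is felt (it ensures that the off-diagonal coupling remains bounded and does not introduce a second scale of derivatives).
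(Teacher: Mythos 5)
The paper does not actually prove this proposition: it is imported verbatim as Proposition 6.1 of \cite{LunMal14Arx,LunMal14JST}, so there is no in-paper argument to compare yours against. Your overall strategy --- pass to characteristic (Riemann) variables to turn the two coupled second-order equations into a first-order $4\times 4$ system, then use the Liouville substitution $t=\int_0^x\gamma$ so that condition~\eqref{eq:Tim.Irho=C0 EI/K rho} normalizes both characteristic speeds at once --- is the standard route and is surely the one behind the cited proof; the explicit forms of $B$, $C$, $D$ and $\widehat Q$ in \eqref{eq:Tim.B}--\eqref{eq:Tim.C.D} are only consistent with such a construction, and your verification that the principal part becomes $\pm i\mu/c_j$ is correct.

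However, as written your sketch does not reproduce the \emph{stated} matrices, and one step needs repair. First, a sign: since $\cL$ carries the factor $1/i$, the eigenvalue equation gives $y_2=i\mu y_1$, not $-i\mu y_1$. Second, and more concretely, with your invariants $v_1^{\pm}=h_1y_1'\pm i\mu I_\rho y_1$ the clamping condition $\Phi(0,t)=0$ translates to $v_1^{+}(0)-v_1^{-}(0)=0$, \emph{not} to the sum, so the matrix $C$ of \eqref{eq:Tim.C.D} (rows $(1,1,0,0)$ and $(0,0,1,1)$) is not obtained; likewise your ordering gives the right-end row $(h_1(\ell)-\alpha_1,\ h_1(\ell)+\alpha_1,\dots)$ rather than $(\alpha_1-h_1(\ell),\ \alpha_1+h_1(\ell),\dots)$. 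The choice that matches both $C$ and $D$ is $v_1^{\pm}=I_\rho y_2\pm h_1y_1'$ and $v_2^{\pm}=\rho y_4\pm h_2(y_3'-y_1)$, i.e.\ velocity plus/minus stress. Third, to prove similarity of \emph{operators} (not merely equivalence of the spectral problems) the transformation must not contain the spectral parameter: writing the invariants in terms of $y_2=\Phi_t$ and $y_4=W_t$ instead of $i\mu y_1$, $i\mu y_3$ both removes the $\mu$-dependence and makes the boundedness claim transparent, since
\begin{equation*}
\|v\|_{L^2}^2=\tfrac12\int_0^\ell\bigl(|h_1y_1'|^2+|I_\rho y_2|^2+|h_2(y_3'-y_1)|^2+|\rho y_4|^2\bigr)\,dx
\end{equation*}
is equivalent to the energy norm \eqref{eq:Tim.|y|H} under \eqref{eq:Tim.coef.cond2}. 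These are all repairable bookkeeping issues rather than a wrong idea, but in their present form they would not yield Proposition~\ref{prop:Tim.similar} as stated.
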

%
%
\begin{theorem} \label{th:Tim.weak}
Let
conditions~\eqref{eq:Tim.coef.cond1},~\eqref{eq:Tim.coef.cond2},~\eqref{eq:Tim.Irho=C0
EI/K rho},~\eqref{eq:Tim.h1,h2.in.AC} be satisfied and let also
\begin{equation} \label{eq:Tim.a1!=h1,a1!=h2}
    \beta_1 = \beta_2 = 0, \quad \alpha_1 \ne \pm h_1(\ell), \quad
    \alpha_2 \ne \pm h_2(\ell).
\end{equation}
Then the system of root functions of the operator $\cL$ forms a
Riesz basis with parentheses in $\fH$.
\end{theorem}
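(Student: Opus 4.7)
By Proposition~\ref{prop:Tim.similar}, the operator $\cL$ is similar to the $4\times 4$ Dirac-type operator $L := L_{C,D}(Q)$ with data given by~\eqref{eq:Tim.B}, \eqref{eq:Tim.C.D}, \eqref{eq:Tim.Q}. Since the Riesz basis with parentheses property is preserved under similarity, it suffices to prove it for $L_{C,D}(Q)$ in $L^2([0,1];\bC^4)$. The strategy is to decompose $L = L_0 + T$, where $L_0$ splits into two independent $2\times 2$ Dirac-type operators whose root vectors form a Riesz basis by Corollary~\ref{cor:separ.regul.basis}, and where $T$ is a bounded perturbation so that Proposition~\ref{prop:Riesz.basis.abstract} applies.

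Decompose $Q = Q_0 + Q_1$, where $Q_0$ collects the two on-diagonal $2\times 2$ blocks of the matrix~\eqref{eq:Tim.Q(x)} (involving only $p_j$ and $h_j'$, hence lying in $L^1$ after composition with $x(\cdot)$), and $Q_1$ collects the two off-diagonal $2\times 2$ blocks (involving only $h_2$). Since $h_2\in AC[0,\ell]$ by~\eqref{eq:Tim.h1,h2.in.AC} and $I_\rho,\rho$ are continuous and bounded away from zero, the entries of $Q_1$ are essentially bounded; hence $T :=$ multiplication by $Q_1$ is a bounded operator on $L^2([0,1];\bC^4)$. Under $\beta_1=\beta_2=0$, the matrices $C, D$ in~\eqref{eq:Tim.C.D} are themselves block-diagonal in the decomposition $\bC^4 = \bC^2 \oplus \bC^2$, so the boundary conditions $C y(0)+D y(1)=0$ split into two independent pairs of conditions on $(y_1,y_2)$ and on $(y_3,y_4)$. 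Consequently $L_0 := L_{C,D}(Q_0)$ is the orthogonal direct sum $L^{(1)}\oplus L^{(2)}$, where $L^{(j)}$ acts on $L^2([0,1];\bC^2)$ with $B^{(j)}=\diag(-b_j,b_j)$, a summable potential matrix, and the separated boundary conditions $y_1(0)+y_2(0)=0$ and $(\alpha_j-h_j(\ell))y_1(1)+(\alpha_j+h_j(\ell))y_2(1)=0$. The hypothesis $\alpha_j\ne\pm h_j(\ell)$ in~\eqref{eq:Tim.a1!=h1,a1!=h2} gives $abcd\ne 0$ in the notation of Corollary~\ref{cor:separ.regul.basis}, so these BCs are separated and regular.

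By Corollary~\ref{cor:separ.regul.basis}, the eigenvalues of $L^{(j)}$ are asymptotically separated with asymptotics $\l_n^{(j)} = \pi n/b_j + \beta^{(j)} + o(1)$, and the root vectors of $L^{(j)}$ form a Riesz basis in $L^2([0,1];\bC^2)$. Taking the direct sum, the root vectors of $L_0$ form a Riesz basis in $L^2([0,1];\bC^4)$. The combined spectrum of $L_0$ lies in a horizontal strip $|\Im\l|\le\tau$ and, after re-enumeration, satisfies $|\l_n|\ge C|n|$ for some $C>0$, since both branches grow linearly. The asymptotic separation within each branch forces $L_0$ to have only finitely many associated vectors, so Proposition~\ref{prop:Riesz.basis.abstract} applies with the bounded perturbation $T$ and yields that the root vectors of $L = L_0+T$ form a Riesz basis with parentheses in $L^2([0,1];\bC^4)$. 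Pulling back via the similarity of Proposition~\ref{prop:Tim.similar} completes the proof.

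The main obstacle is the \emph{decoupling step}: verifying that under $\beta_1=\beta_2=0$ both the differential expression (with $Q_0$) and the boundary operators genuinely split into two independent $2\times 2$ problems, and that the residual coupling $Q_1$ is indeed a bounded multiplication operator, for which the hypothesis $h_2\in AC[0,\ell]$ is essential. Once the decoupling is established, the remainder is a direct combination of Corollary~\ref{cor:separ.regul.basis} (Riesz basis in the unperturbed $L_0$) and Proposition~\ref{prop:Riesz.basis.abstract} (transfer to the perturbed operator).
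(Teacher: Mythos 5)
Your proposal is correct and follows essentially the same route as the paper: the same splitting of $Q$ into its diagonal $2\times 2$ blocks (summable) plus the off-diagonal $h_2$-blocks (bounded), the same observation that $\beta_1=\beta_2=0$ decouples the boundary conditions into two separated regular $2\times 2$ problems handled by Corollary~\ref{cor:separ.regul.basis}, and the same final appeal to Proposition~\ref{prop:Riesz.basis.abstract} for the bounded perturbation. No substantive differences from the paper's argument.
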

%
%
\begin{proof}
Consider the operator $L_{C,D}(Q)$ defined in
Proposition~\ref{prop:Tim.similar}. Since $\beta_1 = \beta_2 =
0$ we can represent it as bounded perturbation of the direct sum
of two $2 \times 2$ Dirac operators:
\begin{align}
    L_{C,D}(Q) &= L(U_{1}, V_{1}, Q_1) \oplus L(U_2, V_2, Q_2) + \wt{Q},
    \label{eq:LCDQ=Lbc1.Lbc2+wtQ} \\
    L(Q_j) &= -i \begin{pmatrix} -b_j & 0 \\ 0 & b_j \end{pmatrix} y' + Q_1 y,
    \quad y = \col(y_1, y_2), \label{eq:Lbcj.Qj} \\
    U_j(y) &:= y_1(0) + y_2(0), \quad V_j(y) := (\alpha_j - h_j(\ell)) y_1(1) +
    (\alpha_j + h_j(\ell)) y_2(1), \quad j \in \{1,2\}, \label{eq:bcj} \\
    Q_1(t) &= \widehat{Q}_1(x(t)), \quad
    \widehat{Q}_1 = (-2 i I_{\rho})^{-1} \begin{pmatrix}
        p_1+h_1' & p_1-h_1' \\
        p_1+h_1' & p_1-h_1' \\
    \end{pmatrix}, \label{eq:Q1t} \\
    Q_2(t) &= \widehat{Q}_1(x(t)), \quad
    \widehat{Q}_2 = (-2 i \rho)^{-1} \begin{pmatrix}
        p_2+h_2' & p_2-h_2' \\
        p_2+h_2' & p_2-h_2' \\
    \end{pmatrix}, \label{eq:Q2t} \\
    \wt{Q}(t) &= \widehat{\wt{Q}}(x(t)), \quad
    \widehat{\wt{Q}} = \Theta^{-1} \codiag\left(
        \begin{pmatrix} h_2 & -h_2 \\ h_2 & -h_2 \end{pmatrix},
        \begin{pmatrix} -h_2 & -h_2 \\ h_2 & h_2 \end{pmatrix}
    \right). \label{eq:wtQt}
\end{align}
It follows
from~\eqref{eq:Tim.coef.cond1},~\eqref{eq:Tim.coef.cond2},~\eqref{eq:Tim.h1,h2.in.AC}
that $Q_1, Q_2 \in L^1[0,1] \otimes \bC^{2 \times 2}$ and
$\wt{Q} \in L^{\infty}[0,1] \times \bC^{2 \times 2}$. Due to
conditions~\eqref{eq:Tim.a1!=h1,a1!=h2}, the operator $L(U_j,
V_j, Q_j)$ is a $2 \times 2$ Dirac operator with separated
regular boundary conditions. By
Corollary~\ref{cor:separ.regul.basis}, the system of its root
vectors forms a Riesz basis in $L^2[0,1] \otimes \bC^2$ and its
eigenvalues have a proper asymptotic, in particular,
inequality~\eqref{eq:ln>cn} is satisfied for them. It is also
clear that $L(U_j, V_j, Q_j)$ has finitely many associated
vectors. Clearly, the direct sum $L := L(U_1, V_1, Q_1) \oplus
L(U_2, V_2, Q_2)$ has the same properties. Since $\wt{Q}$ is
bounded, the operator $L_{C,D}(Q)$ is a bounded perturbation of
"spectral"\ operator $L$. Hence by
Proposition~\ref{prop:Riesz.basis.abstract}, the system of root
vectors of the operator $L_{C,D}(Q)$ forms a Riesz basis with
parentheses in $L^2[0,1] \otimes \bC^4$. Since, by
Proposition~\ref{prop:Tim.similar}, $\cL$ is similar to the
operator $L_{C,D}(Q)$, the system of root functions of $\cL$
forms a Riesz basis with parentheses in $\fH$.
\end{proof}
%
%
\begin{remark}
In~\cite{LunMal14Arx,LunMal14JST} the same result was proved
under additional smoothness assumptions
\begin{equation} \label{eq:Tim.p1,p2.inLinf}
    p_1, p_2 \in L^{\infty}[0,\ell], \quad h_1, h_2 \in \Lip_1[0,\ell].
\end{equation}
Hence Theorem~\ref{th:Tim.weak} is considerable generalization
to the most general conditions on coefficients.
\end{remark}
%
%

%
%
\end{document}